\definecolor{refkey}{gray}{.75}
\definecolor{labelkey}{gray}{.5}
\newcommand{\midarrow}{\tikz \draw[-triangle 45] (0,0) -- ++(.1,0);}
\newcommand{\upangledarrow}{\tikz \draw[-triangle 45] (0,0) -- ++(.1,.05);}
\newcommand{\downangledarrow}{\tikz \draw[-triangle 45] (0,0) -- ++(.1,-.05);}
\colorlet{DarkGreen}{green!50!black}
\colorlet{DarkGray}{gray!60!black}
\numberwithin{equation}{section}
\newcommand{\ignore}[1]{}
\renewcommand{\epsilon}{\varepsilon}
\newcommand{\one}{\mathbf{1}}
\newcommand{\zero}{\mathbf{0}}
 \definecolor{refkey}{gray}{.5}
 \definecolor{labelkey}{gray}{.5}
\definecolor{light}{gray}{.9}
\newtheorem{theorem}{Theorem}[section]
\newtheorem*{theorem*}{Theorem}
\newtheorem{lemma}[theorem]{Lemma}
\newtheorem{claim}[theorem]{Claim}
\newtheorem{proposition}[theorem]{Proposition}
\newtheorem{observation}[theorem]{Observation}
\newtheorem{corollary}[theorem]{Corollary}
\theoremstyle{definition}{

\newtheorem{definition}[theorem]{Definition}

\newtheorem*{definition*}{Definition}

\newtheorem{remark}[theorem]{Remark}

}
\newcommand{\plusone}{\pmb{+1}}
\newcommand{\plus}{\pmb{+}}
\newcommand{\minusone}{\pmb{-1}}
\newcommand{\minus}{\pmb{-}}
\newcommand{\cC}{\ensuremath{\mathcal C}}
\newcommand{\cE}{\ensuremath{\mathcal E}}
\newcommand{\cG}{\ensuremath{\mathcal G}}
\newcommand{\cT}{\ensuremath{\mathcal T}}
\newcommand{\Ext}{{\mathsf{Ext}}}
\newcommand{\Int}{{\mathsf{Int}}}
\newcommand{\Prrg}{{\mathbb{P}}_{\textsc{rrg}}}
\newcommand{\tmix}{t_{\textsc{mix}}}
\newcommand{\rc}{\textsc{rc}}
\newcommand{\es}{\textsc{es}}
 \renewcommand{\epsilon}{\varepsilon}
\newcommand{\tv}{{\textsc{tv}}}
\newcommand{\superimpose}[2]{%
  {\ooalign{$#1\@firstoftwo#2$\cr\hfil$#1\@secondoftwo#2$\hfil\cr}}}
\newcommand{\sbullet}{%
  \hbox{\fontfamily{lmr}\fontsize{.4\dimexpr(\f@size pt)}{0}\selectfont\textbullet}}
\begin{document}

\title{Low-temperature Ising dynamics with random initializations}

\author[Reza Gheissari]{Reza Gheissari$^\dag$}
% \address{R.\ Gheissari\hfill\break
% Departments of Statistics and EECS \\ UC Berkeley }
% \email{gheissari@berkeley.edu}
\thanks{$^\dag$ Departments of Statistics and EECS, University of California, Berkeley. Email: gheissari@berkeley.edu.}

\author[Alistair Sinclair]{Alistair Sinclair$^\ddag$}
% \address{A.\ Sinclair\hfill\break
% Computer Science Division \\ UC Berkeley }
% \email{sinclair@berkeley.edu}
\thanks{$^\ddag$ Computer Science Division, University of California, Berkeley. Email: sinclair@cs.berkeley.edu.}

\maketitle

\vspace{-.5cm}
\begin{abstract}
It is well known that Glauber dynamics on spin systems typically suffer exponential slowdowns
at low temperatures.  This is due to the emergence of multiple metastable phases in the state space, separated
by narrow bottlenecks that are hard for the dynamics to cross.  It is a folklore belief that if the dynamics is
initialized from an appropriate random mixture of ground states, one for each phase, then convergence to the
Gibbs distribution should be much faster. However, such phenomena have largely evaded rigorous analysis, as most tools in the study of Markov chain mixing times are tailored to worst-case initializations.    

In this paper we develop a general framework towards establishing this conjectured behavior for the Ising model.
In the classical setting of the Ising model on
an $N$-vertex torus in $\mathbb Z^d$, our framework implies that the mixing time for the Glauber dynamics, initialized from a $\frac 12$-$\frac 12$ mixture 
of the all-plus and all-minus configurations, is $N^{1+o(1)}$ in dimension $d=2$, and at most quasi-polynomial in all dimensions $d\ge 3$, at all temperatures below the critical one. 
The key innovation
in our analysis is the introduction of the notion of ``weak spatial mixing within a phase'', a low-temperature adaptation of the
classical concept of weak spatial mixing. We show both that this new notion is strong enough to control the mixing time from the above random initialization (by relating it to the mixing time with plus boundary condition at $O(\log N)$ scales), and that it holds at all low temperatures in all dimensions. 

This framework naturally extends to much more general families of graphs. To illustrate this, we also use the same approach to establish optimal $O(N\log N)$ mixing for the Ising Glauber dynamics on random regular graphs at sufficiently low temperatures, when initialized from the same random mixture.
\end{abstract}

\vspace{-.25cm}
\section{Introduction}
It is well known that Glauber dynamics (local Markov chains) on spin systems (such as the Ising, Potts, and hard-core models, graph colorings, etc.) suffer an exponential slowdown
at low temperatures.  This is due to the emergence of multiple phases in the state space, which are separated
by narrow bottlenecks that are hard for the dynamics to cross.  Much effort has been devoted to overcoming
this obstacle, including the Swendsen-Wang dynamics~\cite{SW} (which allows large-scale, non-local moves), various dynamics on alternative representations of the spin system
(including the subgraph dynamics~\cite{JSIsing}, polymer dynamics~\cite{CGGPS} and 
random-cluster dynamics~\cite{Grimmett}) and non-dynamical
methods based on the cluster expansion~\cite{HPR-Algorithmic-Pirogov-Sinai}.  
(See Section~\ref{subsec:related} below for a summary of what is known about these methods.) 

However, there is a much more ``obvious'' solution to this problem, at least in cases where the phases and their respective 
{\it ground states\/} (maximum-likelihood configurations) are well understood: simply initialize the standard Glauber
dynamics to be in a random mixture of the ground states (one for each phase), and run it as usual.
The intuition is that, presumably, the main obstacle to rapid mixing is the slow transitions 
between phases, so we should expect it to converge rapidly ``within each phase''; since at low temperatures the overall
probability distribution on configurations is approximated by a mixture of the single-phase distributions, this should suffice for 
global convergence.  
This intuition is valid in the special case of the mean-field Ising model (i.e., on 
the complete graph)~\cite{LLP}, where the dynamics reduces to a 1D process. 
But, as we shall explain later, it is rather more subtle on more complex geometries.

In this paper, we make progress towards establishing the validity of this intuition for the Ising model on more complicated geometries. For concreteness we focus our discussion on the extensively studied setting of the $d$-dimensional integer lattice $\mathbb Z^d$. There, we show that the mixing time, started from a random ground-state
initialization, is quasi-linear in dimension $d=2$ and quasi-polynomial in all 
dimensions $d\ge 3$ throughout the low-temperature regime; this should
be contrasted with the exponential mixing time for worst-case initializations in the same regime.  The methodology we develop is useful for establishing the analogous paradigm on more general families of graphs; as an illustration, we show that the same approach yields optimal mixing time bounds for the Ising Glauber dynamics from the random ground-state initialization on random regular graphs at sufficiently low temperatures.

To state our results more precisely, we first remind the reader of the definition of the Ising model.  Given a finite
graph $G=(V,E)$, configurations of the Ising model are assignments $\sigma:V\to \{\pm 1\}$
of one of two {\it spins}, denoted $\pm 1$, to each vertex.  The probability that the model is in configuration~$\sigma$
is specified by the {\it Gibbs distribution at inverse temperature} $\beta>0$: 
\begin{equation}\label{eqn:gibbs}
   \pi(\sigma) = \frac{1}{\mathcal Z_{G,\beta}}\, \exp(-\beta|C(\sigma)|),
\end{equation}
where $C(\sigma)$ is the set of edges $\{u,v\}\in E$ connecting vertices of different spins
(i.e., edges in the cut induced by the spins)
and $\mathcal Z_{G,\beta}$ is the normalizing factor, or partition function.
Note that the distribution~\eqref{eqn:gibbs} favors configurations with fewer cut edges,
and this bias increases with the parameter~$\beta >0$. The Gibbs distribution can be seen as a symmetric mixture of its restrictions to two phases: the {\it plus\/} phase, in which the {\it magnetization\/} (excess of $+1$ over $-1$ spins) is non-negative, and the {\it minus\/} phase, in which the magnetization is non-positive. Whereas at high temperatures (small~$\beta$) this perspective is uninformative, at low temperatures (large $\beta$) the Gibbs distribution becomes bimodal, one mode for each phase, and the boundary between the phases (the set of configurations with magnetization zero) has exponentially small weight.

\begin{figure}[t]
\begin{subfigure}[b]{.32\textwidth}
\begin{tikzpicture}[scale = .5]
\node at (0,0) {\includegraphics[width = 5cm]{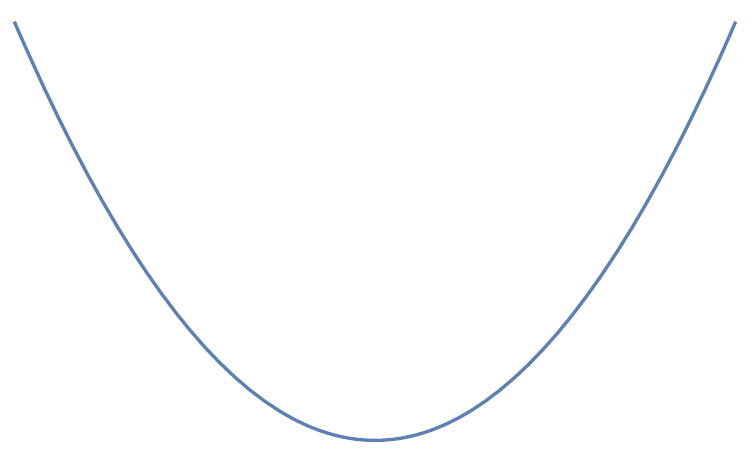}};
\draw[|-|] (-5,-2.8)--(5,-2.8);
\draw[->] (0,-2.8)--(0,3);
\shade[ball color = red!40, opacity = 1] (0,-2.525) circle (.25cm);
\draw [->] (.3,-2.5) to [out=10,in=220] (1,-2.2);
\draw [->] (-.3,-2.5) to [out=180-10,in=180-220] (-1,-2.2);
\end{tikzpicture}
\subcaption{$\beta<\beta_c$}
\end{subfigure}
\begin{subfigure}[b]{.32\textwidth}
\begin{tikzpicture}[scale = .5]
\node at (0,0) {\includegraphics[width = 5cm]{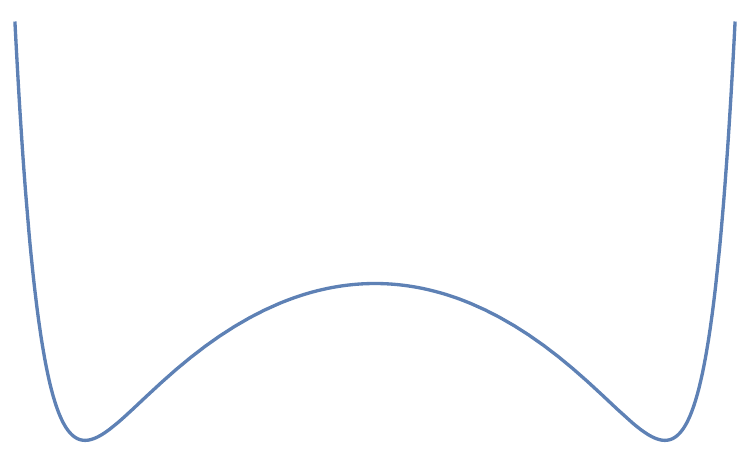}};
\draw[|-|] (-5,-2.8)--(5,-2.8);
\draw[->] (0,-2.8)--(0,3);
\shade[ball color = red!40, opacity = 1] (4.525,2.7) circle (.25cm);
\shade[ball color = red!40, opacity = 1] (-4.525,2.7) circle (.25cm);
\draw [->] (4.525,2.4) to [out=-95,in=85] (4.475,1.7);
\draw [->] (-4.525,2.4) to [out=-85,in=95] (-4.475,1.7);
\end{tikzpicture}
\subcaption{$\beta>\beta_c$}
\end{subfigure}
\begin{subfigure}[b]{.32\textwidth}
\begin{tikzpicture}[scale = .5]
\node at (0,0) {\includegraphics[width = 5cm]{Low-temp-FE.pdf}};
\draw[|-|] (-5,-2.8)--(5,-2.8);
\draw[->] (0,-2.8)--(0,3);
\draw [->] (4.525,2.4) to [out=-95,in=85] (4.475,1.7);
\shade[ball color = red!40, opacity = 1] (4.525,2.7) circle (.25cm);
\draw[rectangle, fill = gray, opacity = .4] (-5,-2.8)--(0,-2.8)--(0,3)--(-5,3)--(-5,-2.8);
\end{tikzpicture}
\subcaption{$\beta>\beta_c$}
\end{subfigure}
\caption{The normalized magnetization $\frac{1}{|V|}\sum_{v}\sigma_v$ (on the $x$-axis) plotted against $F_\beta(m)$, the negative logarithm of the probability that $\frac{1}{|V|}\sum_{v}\sigma_v=m$ (on the $y$-axis). 
\\
\\
(\textsc{a}) At high temperatures, $F_\beta(m)$ is minimized at $m=0$, and is strictly convex. The magnetization does not pose any obstacle to mixing, and the Glauber dynamics can mix rapidly from every initialization. (\textsc{b}) At low temperatures, $F_\beta(m)$ is bimodal, being minimized at $\pm m_*(\beta)$, and it takes exponentially long for the dynamics to transition from one mode to the other. However, $F_\beta(m)$ is locally convex around $\pm m_*(\beta)$, and thus the global magnetization should not be an obstacle to mixing if one starts in a symmetric mixture 
of the all $+1$ and all $-1$ configurations (though there may be other, more geometric, obstructions: see Remark~\ref{rem:worst-case-mixing-restricted-chain} and Figure~\ref{fig:restricted-chain-bottlenecks}).  
(\textsc{c}) The restricted dynamics, considered in Theorem~\ref{thm:Ising-plus-phase}, rejects any transition that would make the magnetization negative (region shaded gray). This restriction ensures that
again, the global magnetization is not an obstacle to mixing of the restricted dynamics initialized from the $+1$ configuration.}
\label{fig:magnetization-schematic}
\end{figure}

We center our discussion on the classical setting where the graph $G$ is a $d$-dimensional
torus $(\mathbb Z/n\mathbb Z)^d$ (or, equivalently, a cube of side length~$n$ in $\mathbb{Z}^d$
with ``periodic" boundaries) for $d\ge 2$; generalizations to other geometries are discussed in Section~\ref{subsec:other-geometries}.  We write $N=n^d$ for the number of vertices in~$(\mathbb Z/n\mathbb Z)^d$.
In this physically relevant setting, the separation of the two phases alluded to above is known to occur at 
a {\it critical value} $\beta=\beta_c(d)$. More precisely, for $\beta<\beta_c(d)$ the correlation
between spins $\sigma_u,\sigma_v$ goes to zero as the distance between~$u$ and~$v$ goes to infinity, and the (normalized) magnetization satisfies a central limit theorem about zero. For $\beta>\beta_c(d)$ correlations remain uniformly bounded away from zero, and the normalized
magnetization converges to a $\frac{1}{2}$-$\frac{1}{2}$ mixture of two point-distributions at~$\pm m_*(\beta)$. We refer to Figure~\ref{fig:magnetization-schematic} for a schematic of this phase transition and how it relates to mixing of the Glauber dynamics. 
We emphasize that this picture is only heuristic, as the geometry of the configurations is lost when projecting onto the magnetization; see
Remark~\ref{rem:worst-case-mixing-restricted-chain} below for a detailed
discussion of this point.

The Glauber dynamics for the Ising model is a Markov chain on configurations that is very simple to describe.
At each step, a vertex~$v\in V$ is chosen uniformly at random and the spin $\sigma_v$ is replaced by a 
random spin selected according to the correct conditional distribution given the spins on the neighbors of~$v$.
This Markov chain is ergodic and reversible w.r.t.\ the Gibbs distribution~\eqref{eqn:gibbs} and hence converges
to it.   In addition to providing a natural and easy-to-implement algorithm for sampling from the Gibbs distribution~\eqref{eqn:gibbs}, the dynamics mimics the thermodynamic
evolution of the underlying physical system, and is thus of interest in its own right.

The key quantity associated with the Glauber dynamics is its rate of convergence, or {\it mixing time}, which
measures the time until the total variation distance to the stationary distribution is small starting from a {\it worst-case\/} initial distribution.
Classical results have established that the mixing time is $\Theta(N\log N)$ throughout the 
high-temperature regime $\beta<\beta_c(d)$~\cite{MaOl1,LS-information-percolation},
and exponentially slow ($\exp(\Theta(N^{1-1/d}))$) throughout the low-temperature 
regime $\beta>\beta_c(d)$~\cite{Pisztora96,Bodineau05} (where the constants in the $\Theta$ notation here depend on $\beta$ and~$d$).
In this latter case, the slow mixing is exhibited by a bottleneck between the plus and minus phases:
with high probability, the time for the dynamics initialized from the $\plusone$ configuration to enter the minus phase 
is exponentially large. Here and throughout the paper, we will use $\plusone$ (respectively, $\minusone$) to
denote a configuration consisting of all $+1$ spins (respectively, all $-1$ spins).

Our main result proves that, if we initialize the dynamics on the torus from the $\plusone$ or $\minusone$
configurations with probability $\frac 12$ each (denoted by $\nu^{\pmb{\pm}}$), then the mixing time is quasi-linear in dimension $d=2$ and at most quasi-polynomial 
in $d\ge 3$.\footnote{By ``mixing time'' here
we mean the time~$\tmix$ until the variation distance from stationarity
is at most~$\frac{1}{4}$, as in the standard definition from a worst-case initial distribution.  In the standard setting,
this implies that the time to reach variation distance~$\varepsilon$ is $O(\tmix\log\varepsilon^{-1})$.  In our setting,
due to our {\it specific\/} initial distribution, this ``boosting'' no longer automatically holds.  However, all our mixing time bounds in this paper suffice
to achieve any variation distance $\varepsilon\ge 1/{\rm poly}(N)$.
See Section~\ref{sec:relaxation-within-phase} for details of the $\varepsilon$-dependence of the bounds in Theorem~\ref{thm:Ising-main}.}

\begin{theorem}\label{thm:Ising-main}
Fix $d\ge 2$ and any $\beta>\beta_c(d)$. The mixing time of the Glauber dynamics for the Ising model on $(\mathbb Z/n\mathbb Z)^d$ initialized from the distribution $\nu^{\pmb{\pm}}$ is 
\begin{itemize}
    \item $N^{1+o(1)}$ in dimension $d=2$;
    \item $N^{O(\log^{d-1} N)}$ in dimensions $d\ge 3$. 
\end{itemize}
\end{theorem}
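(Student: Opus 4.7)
The plan is to pass from the two-point mixture $\nu^{\pmb{\pm}}$ to a chain restricted to a single phase (the plus phase, by symmetry), and then to control that restricted chain by combining fast mixing at $O(\log N)$ scales with the notion of weak spatial mixing within a phase highlighted in the abstract. Concretely, I would introduce a chain identical to Glauber dynamics except that transitions exiting a plus-phase set $\cA^+$ (for example, configurations with non-negative magnetization; see Figure~\ref{fig:magnetization-schematic}(c)) are rejected. Starting from $\plusone$, this restricted chain agrees with the standard Glauber dynamics for a time exponential in $N$, since the bottleneck between phases has exponentially small $\pi$-mass. By $\pm$ symmetry, the mixing time from $\nu^{\pmb{\pm}}$ to $\pi$ is therefore bounded (up to an $e^{-\Omega(n^{d-1})}$ error) by the mixing time of the restricted chain started at $\plusone$ to its stationary distribution $\pi(\cdot\mid\cA^+)$.

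To bound the latter, I would work on two scales. At the local scale $\ell=O(\log N)$, classical results show that the Glauber dynamics on a box of side $\ell$ with all-$+$ boundary mixes in $\mathrm{poly}(\ell)$ time throughout the low-temperature regime in $d=2$, and with somewhat weaker but still polylogarithmic-in-$N$ bounds in higher $d$, via Peierls-type control of minus droplets. The new ingredient is weak spatial mixing within a phase, which asserts that the marginal of $\pi(\cdot\mid\cA^+)$ on a sub-region $\Lambda'\subset\Lambda$ differs from the marginal of the $+$-boundary Gibbs measure on $\Lambda$ by a total variation exponentially small in $\dist(\Lambda',\partial\Lambda)$; the plus-phase conditioning is exactly what kills the obstruction to classical weak spatial mixing at $\beta>\beta_c$, by suppressing minus droplets at $\partial\Lambda$. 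With these two ingredients in hand, I would partition the torus into blocks of side $\ell$, define a block dynamics resampling each block from its local conditional distribution, and use weak spatial mixing within a phase to show this block dynamics is rapidly mixing; a Dirichlet-form comparison then lifts the block bound to the single-site restricted chain, producing a total mixing time of roughly the number of blocks times the local mixing time. This yields $N^{1+o(1)}$ in $d=2$ and $N^{O(\log^{d-1}N)}$ in $d\ge 3$, once logarithmic losses at each of the $d-1$ intermediate scales are accounted for.

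The chief difficulty is proving weak spatial mixing within a phase at all $\beta>\beta_c(d)$ and in every dimension: classical weak spatial mixing fails at low temperatures, and one must exploit the plus-phase conditioning via a contour analysis to show that minus contours touching $\partial\Lambda$ have exponentially decaying influence on the interior under $\pi(\cdot\mid\cA^+)$. A secondary obstacle is the $\log N$ per-scale loss inherent to the block-dynamics comparison in $d\ge 3$, which is responsible for the gap between the quasi-linear bound in $d=2$ and the quasi-polynomial bound in higher dimensions.
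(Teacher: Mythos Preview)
Your high-level architecture matches the paper's: restrict to the plus phase, show the restricted chain from $\plusone$ coincides with the unrestricted one for $e^{\Omega(n^{d-1})}$ time via the surface-order large deviation~\eqref{eq:surface-order-LDP}, and then reduce to a local mixing question at scale $O(\log N)$ under $\plusone$ boundary via WSM within a phase. But several of the specific claims that drive your bounds are incorrect, and your proposed mechanism for the reduction differs from the paper's in a way that creates a real gap.

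\textbf{Local mixing inputs.} You assert that Glauber dynamics on a side-$\ell$ box with $\plusone$ boundary mixes in $\mathrm{poly}(\ell)$ in $d=2$ and ``polylogarithmic-in-$N$'' in $d\ge 3$. Neither is known: in $d=2$ the best bound for all $\beta>\beta_c$ is $\ell^{O(\log \ell)}$ \cite{LMST}, and in $d\ge 3$ only the crude $\exp(O(\ell^{d-1}))$ is available at all low temperatures. Plugging $\ell=O(\log N)$ into \emph{these} bounds is exactly what produces $N^{1+o(1)}$ and $N^{O(\log^{d-1}N)}$; the quasi-polynomial exponent is not a ``per-scale logarithmic loss'' from a multi-scale iteration but simply $\exp(C(\log N)^{d-1})$. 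If the local bound were $\mathrm{poly}(\ell)$ as you claim, the result would be $O(N\,\mathrm{polylog}\,N)$ in every dimension, which is the conjecture, not the theorem.

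\textbf{Reduction mechanism.} The paper does not use block dynamics or a Dirichlet-form comparison. A block-dynamics approach on the restricted chain would require controlling the \emph{conditional} laws $\widehat\pi(\sigma_B\in\cdot\mid\sigma_{B^c})$, whereas WSM within a phase~\eqref{eqn:wsmphaseintro} only compares the \emph{marginal} of $\widehat\pi$ to the $\plusone$-boundary measure. The paper instead exploits monotonicity directly: under the grand coupling, $\widehat X_t^{\plus}(v)=X_t^{\plus}(v)\le X_{B_m^{\plus}(v),t}^{\plus}(v)$ until the (exponentially long) hitting time of $\partial\widehat\Omega$, so the single-site bias $\widehat P_tf_v(\plus)$ is bounded by a WSM-within-a-phase term plus the local convergence term~\eqref{eq:assumption-scale-m}. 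Summing over sites (again via monotonicity) gives the full TV bound. This sidesteps any spectral comparison and the non-monotonicity of the restricted chain.

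\textbf{WSM within a phase.} A contour/Peierls analysis will only prove this at sufficiently large $\beta$, not throughout $\beta>\beta_c(d)$. The paper obtains it for all low temperatures by passing to the random-cluster representation, invoking the random-cluster WSM of Duminil-Copin--Goswami--Raoufi, and then lifting back to Ising via Edwards--Sokal plus a Pisztora-style coarse-graining that identifies the boundary cluster with the giant component (Section~\ref{sec:Ising-wsm-within-phase}). This is the technical heart of the proof and is what you flagged as the chief difficulty; your proposed route would not cover the full range.
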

Recall that the worst-case mixing time is exponential in $N^{1-1/d}$ for all $d\ge 2$ and $\beta>\beta_c(d)$; to the best of our knowledge, there was previously no sub-exponential
bound known for well-chosen initializations.

\subsection{Proof approach} To prove Theorem~\ref{thm:Ising-main}, we introduce a novel notion that we call \emph{weak spatial mixing
within a phase}, which we believe to be of independent interest.   Weak spatial mixing (WSM) is a classical notion capturing the decay of correlations in spin systems, and has been extensively used in the analysis of these spin systems and their dynamics. In the case of the Ising model on~$(\mathbb Z/n\mathbb Z)^d$, WSM says (informally) that for every vertex $v$, the influence of the spins at a distance greater than~$r$ from~$v$ on the distribution at $v$ 
decays exponentially with~$r$.  I.e., for every~$n$ and $r<n/2$, say,
\begin{equation}\label{eqn:wsmintro}
   \max_{\tau: B_{r}^c(v) \to \{\pm 1\}} \| \pi(\sigma_v\in \cdot \mid \sigma_{B_{r}^c(v)}=\tau) - \pi(\sigma_v\in \cdot ) \|_{\tv} \le Ce^{-r/C}
\end{equation}   
for some constant~$C$, where $B_{r}^c(v)$ denotes the set of vertices outside the $\ell_\infty$ ball of radius~$r$ centered at~$v$, and $\Vert\cdot\Vert_\tv$ is total variation distance.
The phase transition in the Ising model corresponds to the fact that WSM holds for all $\beta<\beta_c(d)$ and
breaks down as soon as $\beta \ge \beta_c(d)$.  To define {\it WSM  within a phase},
we let $\widehat\pi$ denote the Gibbs
distribution on $(\mathbb Z/n\mathbb Z)^d$ restricted to the plus phase (i.e., conditioned on having non-negative magnetization), and
replace~\eqref{eqn:wsmintro} by the requirement that, for every $n$ and $r< n/2$, 
\begin{equation}\label{eqn:wsmphaseintro}
   \Vert \pi(\sigma_v\in \cdot \mid\sigma_{B_{r}^c(v)} = \plusone) - \widehat\pi(\sigma_v \in \cdot)\Vert_{\tv} \le Ce^{-r/C}.
\end{equation}
(By spin-flip symmetry, if~\eqref{eqn:wsmphaseintro} holds then the analogous bound for the minus phase also holds.)
Note that~\eqref{eqn:wsmphaseintro} is a much weaker condition than~\eqref{eqn:wsmintro} as it requires correlations
to decay only in the plus phase, with $+1$ spins outside $B_r(v)$---loosely, it can be viewed as a ``monotone version"
of~\eqref{eqn:wsmintro}. 

Armed with this new notion, we prove Theorem~\ref{thm:Ising-main} in two stages.  The first stage concerns
the Glauber dynamics restricted to a single phase, say the plus phase, by ignoring all  updates that would make the magnetization
negative (so that its stationary distribution is~$\widehat\pi$).  We show that if WSM within a phase 
holds, this restricted dynamics mixes in subexponential time when initialized from its ground state. 

\begin{theorem}\label{thm:Ising-plus-phase}
If $\beta>\beta_c(d)$ and WSM within a phase holds, then the mixing time of the Ising Glauber dynamics on $(\mathbb Z/n\mathbb Z)^d$ restricted to the plus phase, initialized from $\plusone$, satisfies the bounds of Theorem~\ref{thm:Ising-main}.
\end{theorem}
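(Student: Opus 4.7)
The plan is to analyze the restricted dynamics via a block dynamics argument at scale $\ell = K\log N$ for a sufficiently large constant $K = K(\beta,d)$. I would decompose the torus $(\mathbb Z/n\mathbb Z)^d$ into (overlapping) boxes of side length $2\ell$, and compare the restricted site-Glauber chain to a block dynamics in which, at each step, a random box is resampled from the conditional distribution of $\widehat\pi$ given the rest of the configuration. Standard comparison between site and block dynamics then reduces the task to two sub-problems: bounding the mixing time of the block dynamics, and bounding the time to perform a single block update (i.e., the mixing time of the restricted Glauber chain on a single box with fixed exterior).

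The key use of WSM within a phase is in bounding the single-box mixing. Starting from $\plusone$, monotonicity of the Glauber dynamics guarantees (up to a large-deviations event) that the configuration on the boundary shell of each box remains $\plusone$-dominating throughout the evolution. Under such a plus-like boundary, the WSM-within-a-phase hypothesis applied at scale $\ell$ implies that $\widehat\pi(\,\cdot\, \mid \sigma_{B^c})$ restricted to the central box is within total variation $C\exp(-\ell/C) = N^{-K/C}$ of the Ising Gibbs distribution on that box with plus boundary condition. Taking $K$ large, this discrepancy is negligible on the polynomial scales relevant to us, so single-box mixing reduces to that of the standard Ising Glauber dynamics on an $\ell$-box with plus boundary condition. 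The latter is known to mix in $\operatorname{poly}(\ell)$ time in $d=2$ throughout $\beta>\beta_c(2)$ via the classical results of Martinelli and collaborators, and in at most $\ell^{O(\ell^{d-2})}$ time in $d\ge 3$ via the available quasi-polynomial mixing bounds with plus boundary condition.

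For the outer block dynamics, WSM within a phase also supplies the decay of correlations between non-adjacent blocks needed for a path-coupling (or block-level log-Sobolev) argument, yielding that $O((N/\ell^d)\log N)$ block updates suffice to mix. Assembling the three ingredients and tracking overheads gives a site-dynamics mixing time of $N^{1+o(1)}$ in $d=2$, and $N^{O(\log^{d-1} N)}$ in $d\ge 3$, matching the bounds of Theorem~\ref{thm:Ising-main}.

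The main obstacle will be rigorously propagating the ``plus-like boundary'' condition used to invoke WSM within a phase, since the latter is a statement about stationary distributions whereas the dynamics is a pathwise object. Concretely, one must show that, along the coupled trajectories (or under a suitable censoring scheme), each block's exterior boundary remains sufficiently $\plusone$-dominating that the WSM-within-a-phase estimate can be applied with negligible error. A related subtlety is that the restricted chain's magnetization constraint is inherently non-local, and one must rule out long-range dependencies it could impose when comparing the restricted chain's block update to an unconstrained Ising block update with plus boundary; this should be handled by concentration estimates for the magnetization inside the plus phase, showing that the non-negative magnetization constraint is non-binding with overwhelming probability from the $\plusone$ initialization on the time scales of interest.
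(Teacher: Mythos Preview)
Your block-dynamics comparison strategy has a structural problem: the site-to-block comparison (and any path-coupling or log-Sobolev argument for the outer block chain) is a \emph{worst-case} tool, bounding the spectral gap or mixing time of the restricted chain from \emph{every} initialization in $\widehat\Omega$. But as the paper stresses in Remark~\ref{rem:worst-case-mixing-restricted-chain}, the worst-case mixing of the restricted chain is itself slow---e.g., from the strip configurations of Figure~\ref{fig:restricted-chain-bottlenecks} it is polynomially slower in $d=2$ and expected to be exponential in $d\ge 3$. So no argument that factors through a worst-case block-dynamics bound can produce the claimed rates. Your attempt to rescue this by asserting that ``the boundary shell of each box remains $\plusone$-dominating throughout the evolution'' is precisely the circular step: you would need control on the dynamics to know the boundary is plus-like, and the plus-like boundary to control the dynamics. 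Relatedly, WSM within a phase as defined in~\eqref{eqn:wsmphaseintro} compares $\pi_{B_r^{\plus}(v)}$ to the \emph{unconditional} marginal $\widehat\pi(\sigma_v\in\cdot)$, not to $\widehat\pi(\,\cdot\mid\sigma_{B^c})$ for a generic ``plus-like'' exterior, so the hypothesis does not deliver the conditional-distribution estimate your block-update step requires.

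The paper avoids block dynamics entirely and works at the single-site level. Under the grand coupling, until the (exponentially distant, by Lemma~\ref{lem:zero-magnetization-hitting-time}) hitting time of $\partial\widehat\Omega$ one has $\widehat X_t^{\plus}=X_t^{\plus}$, and by monotonicity $X_t^{\plus}(v)\le X_{B_m^{\plus}(v),t}^{\plus}(v)$ for any $m$, since freezing $B_m(v)^c$ to $\plusone$ can only raise spins. This yields directly
\[
\mathbb P(\widehat X_t^{\plus}(v)=+1)-\widehat\pi(\sigma_v=+1)\;\le\;\big|\pi_{B_m^{\plus}(v)}(\sigma_v=+1)-\widehat\pi(\sigma_v=+1)\big|\;+\;\big|\mathbb P(X_{B_m^{\plus}(v),t}^{\plus}(v)=+1)-\pi_{B_m^{\plus}(v)}(\sigma_v=+1)\big|\;+\;o(1),
\]
so WSM within a phase is applied \emph{exactly as stated} to the first term, and the second is the local mixing on $B_m^{\plus}(v)$ (Proposition~\ref{prop:single-site-relaxation}). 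Summing over $v$ via monotonicity then gives the full total-variation bound (Theorem~\ref{thm:Ising-mixing-restricted-chain}). This is where the $\plusone$ initialization is genuinely used: the one-sided inequality $X_t^{\plus}(v)\le X_{B_m^{\plus}(v),t}^{\plus}(v)$ holds only because the initialization is maximal. Finally, your quoted local mixing inputs are off: in $d=2$ the known bound with $\plusone$ boundary is $\ell^{O(\log\ell)}$ (not $\mathrm{poly}(\ell)$), and in $d\ge 3$ the paper uses the crude $\exp(O(\ell^{d-1}))$ bound; plugging these into $m=O(\log N)$ is what produces the stated $N^{1+o(1)}$ and $N^{O(\log^{d-1}N)}$ rates.
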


\begin{remark}\label{rem:worst-case-mixing-restricted-chain}
We emphasize the delicate point that Theorem~\ref{thm:Ising-plus-phase} (and therefore also
Theorem~\ref{thm:Ising-main}) relies
crucially on having a ``nice" initialization within each phase, namely the ground-state
configuration~$\plusone$ (or $\minusone$).
In contrast to the heuristic picture suggested in Figure~\ref{fig:magnetization-schematic} and applicable for the Ising model on the complete graph~\cite{LLP}, in settings with non-trivial geometry such as $\mathbb Z^d$, the worst-case mixing time of even the restricted dynamics can be much slower.    
Indeed, consider configurations on $(\mathbb Z/n\mathbb Z)^d$ consisting of a strip of $-1$ spins of small but macroscopic width, surrounded on both sides by $+1$ spins (see
Figure~\ref{fig:restricted-chain-bottlenecks}).  At very low temperatures, the mixing time of the restricted dynamics from such an initialization is polynomially slower than $N^{1+o(1)}$ when $d=2$, and more dramatically, is expected to be exponentially slow when $d\ge 3$ (by analogy
with its solid-on-solid approximation: see~\cite{CLMST14}).
Theorem~\ref{thm:Ising-plus-phase} demonstrates that all such bottlenecks are avoided when the dynamics is started from the $\plusone$ configuration. 
\end{remark}

\begin{figure}
    \centering
    \begin{subfigure}[b]{.44\textwidth}
    \centering
\begin{tikzpicture}[scale = .8]
	\draw[fill = blue, opacity = .4] (0,0)--(5,0)--(5,5)--(0,5)--(0,0);
	\draw[white, fill = white, opacity = 1] (3.25,0)--(4,0)--(4,5)--(3.25,5)--(3.25,0);
	\draw[fill = red, opacity = .4] (3.25,0)--(4,0)--(4,5)--(3.25,5)--(3.25,0);
	\node at (1.25,2.5) {$\plusone$};
	\node at (3.6,2.5) {$\minusone$};
	\begin{scope}[every node/.style={sloped,allow upside down}];
     \draw (0,0)-- node {\midarrow} (5,0);
     \draw (0,5)-- node {\midarrow} (0,0);
     \draw (0,5)-- node {\midarrow} (5,5);
     \draw (5,5)-- node {\midarrow} (5,0);
     \end{scope}
\end{tikzpicture}
\subcaption{$d=2$}
\end{subfigure}
    \begin{subfigure}[b]{.44\textwidth}
    \centering
\begin{tikzpicture}[scale = .8]
\shade[yslant=-0.5,right color=blue!20, left color=blue!50]
(0,0) rectangle +(3,3);

\shade[yslant=-0.5,right color=red!20, left color=red!50]
(1.75,0) rectangle +(.5,3);
\shade[yslant=0.5,right color=blue!70,left color=blue!20]
(3,-3) rectangle +(3,3);
\shade[yslant=0.5,xslant=-1,bottom color=blue!20,
top color=blue!80] (6,3) rectangle +(-3,-3);
\shade[yslant=0.5, xslant = -1, bottom color=red!20,
top color=red!80]
(3,1.25) rectangle +(3,-.5);
\node at (2,3.25) {$\plusone$};
\node at (.75,1.5) {$\plusone$};
\node at (2,.75) {$\minusone$};
\node at (3.25,2.65) {$\minusone$};
\node at (4.5,.75) {$\plusone$};

	\begin{scope}[every node/.style={sloped,allow upside down}, yslant=-.5];
     \draw (0,0)-- node {\downangledarrow} (3,0);
     \draw (0,3)-- node {\midarrow} (0,0);
     \draw (0,3)-- node {\downangledarrow} (3,3);
     \draw (3,3)-- node {\midarrow} (3,0);
     \draw (3,6)-- node {\downangledarrow} (6,6);
     \end{scope}
     
     \begin{scope}[every node/.style={sloped,allow upside down}, yslant=.5];
     \draw (3,-3)-- node {\upangledarrow} (6,-3);
     \draw (3,0)-- node {\midarrow} (3,-3);
     \draw (3,0)-- node {\upangledarrow} (6,0);
     \draw (6,0)-- node {\midarrow} (6,-3);
\end{scope}

     \begin{scope}[every node/.style={sloped,allow upside down}, yslant=.5, xslant = -1];
     \draw (3,3)-- node {\upangledarrow} (6,3);
    %  \draw (3,0)-- node {\midarrow} (3,-3);
    %  \draw (3,0)-- node {\midarrow} (6,0);
    %  \draw (6,0)-- node {\midarrow} (6,-3);
\end{scope}

\end{tikzpicture}
\subcaption{$d\ge 3$}
\end{subfigure}
    \caption{Initial configurations on the torus $(\mathbb Z/n\mathbb Z)^d$ whose magnetizations exceed $m_\star(\beta)$, but from which the mixing time within the plus phase (restricted to positive magnetization) is slowed down. Left: In $d=2$, a strip of width $\epsilon n$ of $-1$ spins leads to a mixing time that is at least polynomially slower than from the $\plusone$ initialization. Right: In $d\ge 3$, a strip of width $\epsilon n$ of $-1$ spins on the torus is expected to lead to exponentially slow mixing for the restricted chain, due to the rigidity of the interface separating the $\plusone$ and $\minusone$ regions.}
    \label{fig:restricted-chain-bottlenecks}
\end{figure}
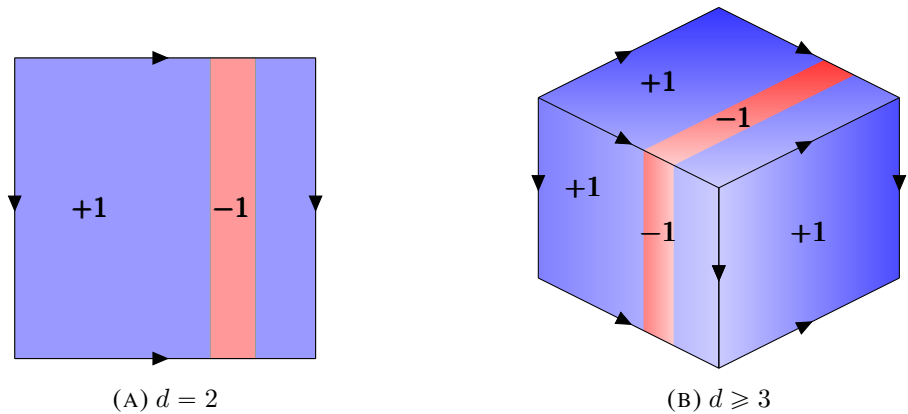

Given the conclusion of Theorem~\ref{thm:Ising-plus-phase} (and its symmetric counterpart for the minus phase), it is straightforward
to prove that WSM within a phase implies the upper bound in Theorem~\ref{thm:Ising-main}; this 
follows because $\pi$ itself is essentially a uniform mixture of its restrictions $\widehat \pi$ and $\widecheck \pi$ to the plus and minus phases, respectively.

To prove Theorem~\ref{thm:Ising-plus-phase}, we apply the WSM within a phase
property to reduce the mixing time on the torus with the $\nu^{\pmb{\pm}}$ initialization to the (worst-case) mixing time of the Ising model {\it with $\plusone$ boundary condition\/} on an exponentially smaller box of side length $m=O(\log n)$.  Plugging in known bounds for this latter mixing
time (specifically, the bound $m^{O(\log m)}$ of~\cite{LMST} for $d=2$ and the trivial
bound $\exp(O(m^{d-1}))$ for $d\ge 3$) then leads immediately to the quasi-linear and quasi-polynomial bounds stated
in Theorem~\ref{thm:Ising-main}.  (A scale reduction in a similar spirit was performed in~\cite{MaTo,LMST}, using arguments specific to dimension $d=2$, to bound the mixing time with $\plusone$ boundary condition initialized from the $\plusone$ configuration.) We emphasize that the (worst-case) mixing time with 
$\plusone$ boundary condition is a famous open question that has been extensively
studied, especially in dimension $d=2$ (see, e.g., \cite{Martinelli-SP,MaTo,LMST}).
It is widely conjectured to be polynomially
bounded for all~$d$, which if established would immediately improve our mixing time
bounds in this paper for the $\nu^{\pmb{\pm}}$ initialization to $O(N{\rm polylog}(N))$.
Indeed, any improvement in the mixing time for the $\plusone$ boundary condition
translates to an improvement in our bounds, and even a modest improvement to $\exp(O(N^{1/d}))$ would suffice to reduce the mixing time for the $\nu^{\pmb{\pm}}$ initialization to ${\rm poly}(N)$. 

The second stage, and technical bulk, of our proof of Theorem~\ref{thm:Ising-main} is to show that WSM within
a phase holds at all low temperatures.\footnote{We will actually prove a slightly more general version of~\eqref{eqn:wsmphaseintro} where $v$ is replaced by a box in $(\mathbb Z/n\mathbb Z)^d$: see Definition~\ref{def:wsm-within-a-phase}.} 

\begin{theorem}\label{thm:Ising-wsm-within-phase-intro}
For every $d\ge 2$ and every $\beta>\beta_c(d)$, the Ising model on $(\mathbb Z/n\mathbb Z)^d$ has WSM within a phase.
\end{theorem}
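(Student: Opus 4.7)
My plan is to prove Theorem~\ref{thm:Ising-wsm-within-phase-intro} by introducing the infinite-volume plus phase $\pi^+$ on $\mathbb{Z}^d$ (the weak limit of finite-volume Ising measures with $\plusone$ boundary conditions) and comparing both sides of~\eqref{eqn:wsmphaseintro} to $\pi^+(\sigma_v\in\cdot)$ via a triangle inequality. The two main inputs will be the random-cluster coarse-graining of Pisztora and the surface-order large deviation principle of Bodineau, both of which are available at every $\beta>\beta_c(d)$ in every dimension $d\ge 2$.

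The first comparison is the easy one. The conditional measure $\pi(\cdot\mid \sigma_{B_r^c(v)}=\plusone)$ is, by definition, the Ising model on $B_r(v)$ with $\plusone$ boundary condition, so the uniform exponential decay of truncated two-point functions in the plus phase---a direct consequence of Pisztora's coarse-graining together with FKG monotonicity---gives $\Vert\pi(\sigma_v\in\cdot\mid \sigma_{B_r^c(v)}=\plusone) - \pi^+(\sigma_v\in\cdot)\Vert_\tv \le Ce^{-r/C}$.

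The main step, and the expected obstacle, is the analogous bound $\Vert \widehat\pi(\sigma_v\in\cdot) - \pi^+(\sigma_v\in\cdot)\Vert_\tv \le Ce^{-r/C}$. The difficulty is that $\widehat\pi$ lives on the torus, which has no boundary to anchor the plus phase, so the ``plus-ness'' of the configuration near $v$ must be extracted from the global, non-local condition of non-negative magnetization. The plan is to condition $\widehat\pi$ on the spins in the annulus $A=B_r(v)\setminus B_{r/2}(v)$ and to prove that, with $\widehat\pi$-probability at least $1-Ce^{-r/C}$, the annular configuration is \emph{plus-like}: in its Edwards--Sokal/FK representation, no minus cluster crosses $A$, so that $\partial B_{r/2}(v)$ is sealed off from the exterior by a connected belt of $+1$ spins. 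Conditional on this event, FKG sandwiches the marginal at $v$ between two finite-volume $\plusone$-boundary Ising measures on boxes of radius $\Theta(r)$, each of which is close to $\pi^+(\sigma_v\in\cdot)$ by the first comparison.

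The hard part will be establishing the plus-like event with the claimed probability under $\widehat\pi$. The natural enemies are mesoscopic minus regions threading $A$, whose energetic cost is $\exp(-\Omega(r^{d-1}))$ by Pisztora's coarse-graining, and configurations of near-zero magnetization, which are allowed by the weak $\widehat\pi$-conditioning but are exponentially suppressed by the surface-order large deviations of Pisztora--Bodineau for the torus Gibbs measure. Combining these two estimates should yield the required exponential control, and the same argument applies essentially verbatim to the slightly more general form of~\eqref{eqn:wsmphaseintro} where the single vertex $v$ is replaced by a small box in $(\mathbb{Z}/n\mathbb{Z})^d$.
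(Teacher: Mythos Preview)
Your detour through the infinite-volume plus measure $\pi^+$ is a sensible reorganization, and the first comparison is indeed a consequence of the random-cluster WSM of~\cite{DCGR20}. The real work, as you anticipate, is the second comparison, and there the proposal has a genuine gap.

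Your domain-Markov step needs a $+1$ \emph{spin} separating surface in the annulus: only then does conditioning on the exterior (once the magnetization constraint is vacuous) yield a plus-boundary Ising measure inside. But the event you describe---``in its Edwards--Sokal/FK representation, no minus cluster crosses $A$''---does not deliver this. If ``minus cluster'' means a minus-colored FK cluster, then even when the only FK cluster crossing $A$ is the $+1$-colored giant, a minus \emph{spin} path can still thread the annulus by hopping through many small minus-colored FK clusters joined by FK-closed edges. If instead you mean a minus spin cluster, then you are asserting exponential decay of minus spin connectivities under $\widehat\pi$ at \emph{every} $\beta>\beta_c(d)$; near $\beta_c$ the density of non-giant FK vertices is close to~$1$, and Pisztora's coarse-graining (which controls FK clusters, not spin clusters) does not by itself preclude their minus-colored subset from percolating. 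Your appeal to an ``energetic cost $\exp(-\Omega(r^{d-1}))$'' is really a large-$\beta$ Peierls intuition, not something Pisztora gives you throughout the low-temperature regime.

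The paper sidesteps the spin-seal issue entirely by staying at the FK level. It introduces an intermediate measure $\widetilde\pi$ that colors the largest FK component $+1$ and all others uniformly, and shows $\|\widetilde\pi-\widehat\pi\|_{\tv}\le Ce^{-n/C}$ via a Hoeffding bound on the magnetization carried by the non-giant clusters (this is precisely the step that extracts ``plus-ness'' from the non-local constraint $M\ge 0$, and it is absent from your plan). It then couples $\pi_{B_r^{\plus}(v)}$ to $\widetilde\pi$ directly through the Edwards--Sokal coupling: the two FK configurations are matched inside a coarse-grained separating surface of ``very-good'' blocks (using~\cite{DCGR20}), and Pisztora's coarse-graining is used only to certify that the boundary-connected FK cluster of one configuration coincides with the giant of the other, so that the colorings of all clusters meeting $B_{r/2}(v)$ can be coupled identically. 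No $+1$ spin seal is ever required.
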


Our proof of Theorem~\ref{thm:Ising-wsm-within-phase-intro} starts from a recent result of Duminil-Copin, Goswami and Raoufi~\cite{DCGR20}, showing a WSM-type
property for the {\it random-cluster representation\/} of the Ising model at all low temperatures in all dimensions. (The random-cluster
representation is oblivious to the phases of the Ising model.)  In order to lift the random-cluster WSM property to our Ising WSM property within a phase, we combine careful revealing and coupling schemes with a sophisticated coarse-graining approach first introduced 
by Pisztora~\cite{Pisztora96}: coarse-graining replaces single vertices in~$\mathbb Z^d$ with boxes of large but constant size to boost a marginally super-critical model into a very super-critical one.\footnote{The coarse-graining is crucial to proving the result throughout the entire low-temperature regime; if we were only interested in proving it for sufficiently large $\beta$, the exponential tails on connected components of $-1$ spins would simplify the argument.} (See Section~\ref{sec:Ising-wsm-within-phase} for details.)

\subsection{Convergence to equilibrium in a pure phase}
A related question, especially in the statistical physics literature, is that of controlling the rate of convergence to equilibrium \emph{within a pure phase}; this is often formalized in \emph{infinite-volume}, where the Glauber dynamics is defined on all of $\mathbb Z^d$ using continuous-time updates (see Seection~\ref{sec:prelim} for a formal definition). 
It is a straightforward consequence of monotonicity (see~\cite{Martinelli-notes}) that when initialized from the $\plusone$ configuration, this dynamics converges to the (pure phase) infinite-volume plus measure $\pi_{\mathbb Z^d}^{\plus}$ as $t\to \infty$, in the following sense: for any function $F$ that is local (depends only on finitely many spins), its expectation under the dynamics initialized from $\plusone$ (denoted $X_{t,\mathbb Z^d}^{\plus}$) converges to its expectation under $\pi_{\mathbb Z^d}^{\plus}$. In a seminal paper~\cite{FisherHuse}, Fisher and Huse predicted that this rate of relaxation within the plus phase should go as $e^{-\Omega(\sqrt{t})}$. 

As a consequence of our approach, we can obtain the following bounds on the rate of this convergence. A proof is provided at the end of Section~\ref{sec:relaxation-within-phase}.

\begin{corollary}\label{cor:infinite-volume-relaxation}
    For every $d\ge 2$ and every $\beta>\beta_c(d)$, for every local function $F$, there exists $C_F$ such that 
    \begin{align}\label{eqn:infvolcor}
        \big|\mathbb E[F(X_{t,\mathbb Z^d}^{\plus})] - \pi_{\mathbb Z^d}^{\plus} [F(\sigma)]\big|\le \begin{cases}C_F \exp( - e^{\Omega(\sqrt{\log t})}) & d=2  \\ C_F \exp( - \Omega(\log t)^{1/(d-1)}) & d\ge 3 \end{cases}\,,\qquad \mbox{for all $t\ge 0$}\,.
    \end{align}
\end{corollary}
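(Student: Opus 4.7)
The plan is to combine the standard monotone coupling of Ising Glauber dynamics with the mixing-time inputs and the WSM-within-a-phase property developed in the paper. First, by decomposing any bounded local $F$ as a difference $F=F_1-F_2$ of two bounded monotone-increasing functions (at the cost of an overall factor depending on the support $|B|$ which is absorbed into $C_F$), it suffices to prove the bound for monotone-increasing $F$ with support in a finite box $B$. Fix a scale $n=n(t)$ to be chosen, let $\Lambda_n\subset\mathbb Z^d$ be an axis-aligned box of side $n$ centered on $B$, and let $Y_t$ denote the Glauber dynamics on $\Lambda_n$ with $\plusone$ boundary condition and $\plusone$ initial condition; denote its stationary law by $\pi_{\Lambda_n}^{\plus}$. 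The standard grand monotone coupling yields $Y_t\ge X_{t,\mathbb Z^d}^{\plus}$ pointwise on $\Lambda_n$ at every $t$, and the fact that $X_{t,\mathbb Z^d}^{\plus}$ stochastically decreases in $t$ to $\pi_{\mathbb Z^d}^{\plus}$ gives $X_{t,\mathbb Z^d}^{\plus}\ge \pi_{\mathbb Z^d}^{\plus}$ stochastically. For monotone-increasing $F$, this sandwich combined with the triangle inequality yields
\begin{equation*}
0\;\le\;\mathbb E[F(X_{t,\mathbb Z^d}^{\plus})]-\pi_{\mathbb Z^d}^{\plus}[F]\;\le\;\underbrace{\bigl|\mathbb E[F(Y_t)]-\pi_{\Lambda_n}^{\plus}[F]\bigr|}_{(\mathrm{I})}\;+\;\underbrace{\bigl|\pi_{\Lambda_n}^{\plus}[F]-\pi_{\mathbb Z^d}^{\plus}[F]\bigr|}_{(\mathrm{II})}.
\end{equation*}

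Term (I) is a mixing-time bound for the Glauber dynamics on $\Lambda_n$ with $\plusone$ boundary condition started from $\plusone$; I would plug in the same inputs used by the authors in the proof of Theorem~\ref{thm:Ising-main}, namely $n^{O(\log n)}$ in $d=2$ (due to~\cite{LMST}) and $\exp(O(n^{d-1}))$ in $d\ge 3$. Submultiplicativity of the mixing time then bounds (I) by $2\|F\|_\infty 2^{-\lfloor t/\tmix\rfloor}$. Term (II) is where the paper's framework enters: Theorem~\ref{thm:Ising-wsm-within-phase-intro}, once translated into a comparison between $\pi_{\Lambda_n}^{\plus}$ and $\pi_{\mathbb Z^d}^{\plus}$, produces $|(\mathrm{II})|\le C_F e^{-n/C}$. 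Balancing the two errors by requiring $t\gtrsim (n/C)\tmix$ then yields the corollary: in $d=2$, $t\gtrsim n^{O(\log n)}$ inverts to $n\le\exp(O(\sqrt{\log t}))$, whence $e^{-n/C}\le\exp(-e^{\Omega(\sqrt{\log t})})$; in $d\ge 3$, $t\gtrsim\exp(Cn^{d-1})$ inverts to $n\le O((\log t)^{1/(d-1)})$, whence $e^{-n/C}\le\exp(-\Omega((\log t)^{1/(d-1)}))$, matching~\eqref{eqn:infvolcor}.

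The technical heart of the argument is term (II). Theorem~\ref{thm:Ising-wsm-within-phase-intro} is stated on a torus, comparing the conditional torus measure $\pi(\cdot\mid\sigma_{B_r^c(v)}=\plusone)$ (which by DLR coincides with the $\plusone$-boundary measure on $B_r(v)$) to the plus-phase-restricted torus measure $\widehat\pi$, whereas (II) calls for a direct comparison of the finite-volume $\pi_{\Lambda_n}^{\plus}$ with the infinite-volume $\pi_{\mathbb Z^d}^{\plus}$. I would bridge the two in two steps: (i) apply Theorem~\ref{thm:Ising-wsm-within-phase-intro} on a large auxiliary torus $(\mathbb Z/n'\mathbb Z)^d$ with $n'\gg n$ to place $\pi_{\Lambda_n}^{\plus}[F]$ within $Ce^{-n/C}$ of $\widehat\pi_{n'}[F]$; (ii) show that $\widehat\pi_{n'}[F]\to\pi_{\mathbb Z^d}^{\plus}[F]$ as $n'\to\infty$ with a rate that is uniform in $n$. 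Step (ii), which will rely on the coarse-graining exponential tails of~\cite{Pisztora96} that already underlie the proof of Theorem~\ref{thm:Ising-wsm-within-phase-intro}, is the subtlest point; once it is in hand, the corollary follows by letting $n'\to\infty$ in (i) and optimizing $n$ as above.
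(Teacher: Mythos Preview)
Your approach is correct and uses the same two ingredients as the paper (local mixing with $\plusone$ boundary, plus a WSM-type equilibrium comparison), but the paper's route is more direct in two respects. First, instead of decomposing $F$ into monotone pieces, the paper immediately reduces to single-site marginals: since the law of $X_{\mathbb Z^d,t}^{\plus}$ stochastically dominates $\pi_{\mathbb Z^d}^{\plus}$, the total variation distance on $B_r$ is bounded by $\sum_{u\in B_r}\bigl(\mathbb P(X_{\mathbb Z^d,t}^{\plus}(u)=+1)-\pi_{\mathbb Z^d}^{\plus}(\sigma_u=+1)\bigr)$, which by translation invariance equals $|B_r|\cdot P_t f_o(\plus)$. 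One then rerun the proof of Proposition~\ref{prop:single-site-relaxation} with $n=\infty$ (so the stopping time $\widehat\tau$ and the restricted chain drop out entirely), obtaining $P_t f_o(\plus)\le Ce^{-g_\infty(t)}$ directly; plugging in the same $f(m)$ as in the proof of Theorem~\ref{thm:Ising-main} gives the stated rates. Second, your bridge for term (II) via an auxiliary torus and the limit $\widehat\pi_{n'}\to\pi_{\mathbb Z^d}^{\plus}$ is more than what is needed: in the $n=\infty$ version the only equilibrium comparison required is $|\pi_{B_m^{\plus}}(\sigma_o=+1)-\pi_{\mathbb Z^d}^{\plus}(\sigma_o=+1)|\le Ce^{-m/C}$, the infinite-volume analogue of Definition~\ref{def:wsm-within-a-phase}, which follows from the same coarse-graining argument as Theorem~\ref{thm:Ising-wsm-within-phase-intro} (indeed with simplifications, since no torus or $\widehat\pi$ is involved). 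Your detour works, but the paper's organization avoids both the monotone decomposition and the torus-to-infinite-volume limit.
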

When $d=2$, the bound of~\eqref{eqn:infvolcor} already followed from~\cite[Corollary 4]{LMST}. On the other hand, when $d\ge 3$, to the best of our knowledge, no quantitative bound on this relaxation rate within a pure phase was previously known, except possibly at very low temperatures.

\subsection{Generalization to other geometries}\label{subsec:other-geometries}
Our approach to establishing Theorem~\ref{thm:Ising-main} can be exported to much more general families of graphs without substantial modification. For a more general family of graphs $(G_N)$ on $N$ vertices, our approach consists of two ingredients, as follows:
\begin{enumerate}
    \item establish the ``WSM within a phase" property at low temperatures: in more general settings, this property is defined exactly as in~\eqref{eqn:wsmphaseintro}, except that the restriction $r<n/2$ is replaced 
    by $r< c\cdot\mbox{diam}(G_N)$ for a suitable constant~$c$; and  
    \item establish a ``local" mixing time with $\plusone$ boundary condition, i.e., a uniform mixing time bound $T(r)$ on all balls of radius $r \le c\cdot  \mbox{diam}(G_N)$ with $\plusone$ boundary condition.
\end{enumerate}
Given these ingredients, our approach yields the machinery to deduce
a mixing time of $T(\epsilon_\beta \log N)$ for the chain $X^{\nu^{\pmb{\pm}}}_t$, as well as the restricted chain $\widehat X_t^{\plus}$. We do not include a formal statement of this general implication since every geometry has its own subtleties, e.g., the optimal choice of~$c$ in (1) and~(2) above; the temperatures at which WSM within a phase holds; and the quality of the local mixing time bound in~(2).

The integer lattice discussed above is an example in which the WSM within a phase property could be established throughout the entirety of the low-temperature regime, but in $d\ge 3$ the mixing time estimates with $\plusone$ boundary condition at the local scale were rather crude. 
To illustrate the wider applicability of our framework, we now present results for the Ising Glauber dynamics on random regular graphs; this presents a situation in which the temperature regime for the WSM within a phase property is sub-optimal, only covering sufficiently low temperatures, but the local mixing time bound, and therefore the resulting mixing time bound for the randomly-initialized dynamics, is optimal. 

The Ising model on random $\Delta$-regular graphs has seen significant attention in recent years. In~\cite{DeMo10} it was shown that the critical temperature $\beta_c(\Delta)$, and indeed the free energy of the model on the random $\Delta$-regular graph, aligns exactly with that of the $\Delta$-regular tree. In particular, for all $\beta$ larger than $\beta_c(\Delta)$, the (worst-case) mixing time is $\exp(\Omega(N))$~\cite{CHK19}. The following theorem establishes optimal $O(N\log N)$ mixing at sufficiently large $\beta$ from the random ground state initialization. 

\begin{theorem}\label{thm:random-graph-mixing}
Fix $\Delta\ge 3$ and $\beta$ large. With probability $1-o(1)$, a $\Delta$-regular random graph $\cG$ on $N$ vertices is such that the mixing time of the Ising Glauber dynamics on $\cG$ initialized from $\nu^{\pmb{\pm}}$ is $O(N\log N)$. 
\end{theorem}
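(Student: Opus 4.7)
The plan is to apply the general two-ingredient framework laid out just below Theorem~\ref{thm:Ising-wsm-within-phase-intro}, leveraging the fact that a random $\Delta$-regular graph $\cG$ is locally tree-like: with probability $1-o(1)$, every ball in $\cG$ of radius $r\le c_\Delta\log N$ (for a suitable $c_\Delta>0$) is a tree or contains at most one cycle. The two required inputs are then (i) WSM within a phase at large $\beta$ on $\cG$, and (ii) a fast local mixing time bound $T(r)$ with $\plusone$ boundary condition on such tree-like balls.

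For (i), on a $\Delta$-regular tree of depth $r$ with $\plusone$ boundary condition on the leaves, the marginal of the spin at the root is $+1$ with probability $1-e^{-\Omega(r)}$ at sufficiently large $\beta$---this follows either from the standard recursion for marginals on regular trees, or from exponential decay of $-1$ clusters in the random-cluster representation at low temperature. The plus-phase measure $\widehat\pi$ on $\cG$ is itself concentrated near $\plusone$ at large $\beta$ (the $-1$ clusters have exponentially small density), so its marginal on any bounded region is also exponentially close to that under $\plusone$ boundary. Combining these one-sided estimates yields the analogue of~\eqref{eqn:wsmphaseintro} for $\cG$, with the sparse exceptional vertices (whose local structure is not a tree) absorbed into the constant.

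For (ii), at sufficiently low temperature the Ising Glauber dynamics on a tree of volume $M$ with $\plusone$ boundary mixes in time $O(M\log M)$, e.g.\ via an $O(1)$ modified log-Sobolev constant obtained by recursive entropy tensorization on the tree (the pinning of the boundary makes the recursion strictly contractive at low temperature). Plugged naively into the framework with $r$ at the tree-like scale, this would give a mixing time polynomial in $N$ that falls short of the optimal $O(N\log N)$; to close the gap, I would upgrade the local input to its log-Sobolev form and then combine these local log-Sobolev estimates with WSM within a phase to deduce an $O(1)$ modified log-Sobolev constant for the global restricted measure $\widehat\pi$ on $\cG$. From this, $O(N\log N)$ mixing from $\plusone$ follows by the standard log-Sobolev-to-mixing-time translation, and the transfer to the $\nu^{\pmb{\pm}}$ initialization is immediate from the plus/minus phase-union argument used to prove Theorem~\ref{thm:Ising-main}. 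The main obstacle I anticipate is precisely this globalization step: overlapping tree-like balls in $\cG$ must be stitched together while quantitatively controlling their boundary influences, which is where WSM within a phase enters in a stronger-than-qualitative way.
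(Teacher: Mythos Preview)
Your identification of the two required inputs is correct, and your local-mixing input (an $O(1)$ log-Sobolev constant on tree-like balls with $\plusone$ boundary, imported from~\cite{MSW-trees-bc}) matches the paper exactly. However, the ``main obstacle'' you anticipate---a globalization step combining the local log-Sobolev constants into a global one for $\widehat\pi$---is a red herring, and the paper does not attempt it. The single-site framework of Section~\ref{sec:relaxation-within-phase} already delivers $O(N\log N)$ directly once the local relaxation rate is $O(1)$: in the notation of~\eqref{eq:assumption-scale-m}, an $O(1)$ log-Sobolev constant on $B_r^{\plus}(v)$ means $f(m)$ is a constant (independent of $m$), so $g(t)$ in~\eqref{eq:g(t)} grows linearly in $t$. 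Concretely, for each vertex $v$ one bounds
\[
\widehat P_t f_v(\plus) \le Ce^{-N/C} + Ce^{-\beta r/C} + Cd^r e^{-c_\star t}\,,
\]
the three terms coming respectively from the hitting-time estimate, WSM within a phase, and the local log-Sobolev bound. Choosing $r = (t/\beta)\wedge \tfrac13\log_d N$ and taking $\beta$ large enough that $\beta/C$ beats $\log d$, every term is at most $Ce^{-t/C}$. Summing over the $N$ vertices (via the monotone grand coupling, exactly as in the proof of Theorem~\ref{thm:Ising-mixing-restricted-chain}) gives $\|\mathbb P(\widehat X_t^{\plus}\in\cdot)-\widehat\pi\|_\tv\le CNe^{-t/C}$, which is $o(1)$ at continuous time $t=O(\log N)$, i.e., $O(N\log N)$ discrete time. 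No stitching of overlapping balls or global functional inequality is needed.

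A secondary remark: your sketch of WSM within a phase via tree recursions is plausible but not what the paper does, and it is unclear you would get the required $\beta$-dependence in the rate (you need $Ce^{-\beta r/C}$ with $C$ independent of $\beta$, precisely so the spatial decay beats the volume factor $d^r$ at large $\beta$). The paper instead proves WSM within a phase by an explicit coupling of $\pi_{B_r^{\plus}(v)}$ and $\widehat\pi$, using a Peierls argument for minus-cluster tails that relies on the $\zeta$-edge-expansion of $\cG$ (not its tree-likeness); the tree-like property is used only for the local log-Sobolev input.
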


The restriction to sufficiently large $\beta$ comes from our proof of WSM within a phase, though we would expect WSM within a phase to hold down to $\beta_c$. The local mixing time with $\plusone$ boundary condition comes by way of comparison with sharp mixing time bounds on the $\Delta$-regular tree with $\plusone$ boundary condition due to~\cite{MSW-trees-bc}. 
We conclude by remarking that the $O(N\log N)$ bound of Theorem~\ref{thm:random-graph-mixing} actually only uses the facts that the  $\Delta$-regular graph $\cG$ is locally tree-like (in a precise sense), and has positive edge expansion. On regular trees, the related result of fast convergence at sufficiently low temperatures with $\plusone$ boundary condition, when initialized from the $\plusone$ configuration, was already shown in~\cite{CaMaTree}. We also mention the work of~\cite{Bianchi}, which extends~\cite{MSW-trees-bc} to a broader class of non-amenable graphs, and could serve as the necessary ingredient for step (2) above on a larger family of expander graphs.

\subsection{Possible extensions} \label{subsec:extensions}
We briefly discuss some potential extensions of our results above, leaving them as open problems. The most immediate such extension is of course improving the quasi-polynomial bounds of Theorem~\ref{thm:Ising-main} in $d\ge 3$ to polynomial, or even $N^{1+o(1)}$ bounds. As discussed earlier, a possible, though challenging route towards this is to improve the existing worst-case mixing time bounds with $\plusone$ boundary condition, which we use as inputs at local scales: improving this even to $\exp(O(N^{1/d}))$ in dimension $d\ge 3$ would already lead to a polynomial mixing time bound for the randomly initialized dynamics on the torus.

Going beyond the Ising model, it is natural to ask the analogous question for the Potts model, a canonical generalization of the Ising model in which each vertex can be assigned one of $q$ possible spins, with the same Gibbs distribution as in~\eqref{eqn:gibbs} except that now $C(\sigma)$ denotes
the {\it $q$-way cut\/} induced by the spins; the Ising model is the case $q=2$. The natural analogue of Theorem~\ref{thm:Ising-main}
would suggest sub-exponential mixing time for the low-temperature Potts Glauber dynamics, starting from the initialization that puts equal
weight~$\frac{1}{q}$ on each of the ground states, i.e., the configurations in which all spins are the same.  
A major obstacle
to extending our current analysis to this scenario is the lack of monotonicity in the Potts model for~$q\ge 3$.

A closely related model that \emph{does} exhibit monotonicity is the {\it random-cluster model}, an extensively studied graphical representation of the Ising and Potts models
in which configurations are subgraphs weighted by their number of edges and number of connected components~\cite{Grimmett}. 
The Glauber dynamics for the random-cluster model (which updates edges rather than vertex spins) on $(\mathbb Z/n\mathbb Z)^d$ does not suffer from the exponential slowdown at low temperatures of the Ising and Potts models, but is expected
to experience such a slowdown {\it at the critical point} if $q$ is larger than some $q_c(d)$, due to a bottleneck between an ``ordered phase" (marked by the existence of a giant component) and a ``disordered phase" (where the subgraph is shattered). This slowdown was proved for~$q$ sufficiently large in~\cite{BCT,BCFKTVV}, and for all $q>q_c(2) = 4$ when $d=2$~\cite{GL1}.  We predict that this bottleneck can be overcome if the dynamics
is initialized from a mixture of the corresponding ground states, i.e., the full and empty subgraphs. (In this case, interestingly, the right mixture is not
uniform, but has weights determined by the parameters $q$ and~$d$.)
Indeed, our proof approach can be seen to generalize immediately to this setting, so the main missing piece is establishing that the appropriate analog of WSM within a phase holds.

\subsection{Related work} \label{subsec:related}
There is a vast literature on Glauber dynamics for spin systems which we do not attempt to summarize here.
We focus only on attempts to circumvent low-temperature bottlenecks---our main concern in this paper.
As mentioned earlier, the only setting in which 
sub-exponential mixing of the low-temperature Glauber dynamics from special initializations
was previously known 
is the mean-field Ising model~\cite{LLP} where 
the question reduces to a 1-dimensional argument based just on the magnetization.  
Other approaches to this problem have typically started by transforming the spin system to an alternative
representation, which does not suffer from the same bottleneck. We summarize these here, though we emphasize that our motivation in this paper is to
explore the more direct approach of using the original Glauber dynamics on the spin system itself.  

The early work of~\cite{JSIsing}, which gave a polynomial-time sampler for the Ising model on all graphs at all temperatures, was based on the so-called ``even subgraph" representation of the model. 

The Glauber dynamics for the random-cluster model has been seen as a promising route to overcoming low-temperature bottlenecks in both the Ising and Potts models, as it is oblivious to the choice of phase. For the $q=2$ case of the Ising model,~\cite{GuoJer} proved that the random-cluster dynamics mixes in polynomial time on all graphs at all temperatures. For general $q$,~\cite{BS} showed that the random-cluster dynamics on subsets of $\mathbb Z^2$ mixes in $O(N\log N)$ time at all low temperatures (see also~\cite{BGVfull} where different boundary conditions were considered). More generally, the classical proof of~\cite{MaOl1} (shown to extend to random-cluster dynamics
in~\cite{HarelSpinka}) implies fast mixing of the random-cluster dynamics on $(\mathbb Z/n\mathbb Z)^d$ whenever the random-cluster model has a WSM property; recall that this property was shown to hold at all low temperatures in all dimensions when $q=2$ in~\cite{DCGR20}.

The Swendsen-Wang dynamics~\cite{SW} exploits the Edwards-Sokal coupling~\cite{ES} of spins and edges in the Potts
and random-cluster models to update large clusters of spins in a single step, and thus jump between phases.
By virtue of a comparison technique introduced in~\cite{Ullrich-random-cluster}, the above bounds on the random-cluster
dynamics translate to bounds on the Swendsen--Wang dynamics up to a multiplicative factor of~$N$. In special settings, one can do better than this lossy comparison and obtain optimal bounds: at low temperatures this includes the complete graph~\cite{LNNP14,GSV,BS-MF},  $\mathbb Z^2$~\cite{Martinelli-SW,BCPSV} and trees~\cite{BZSV-SW-trees}.  

Finally, there has been a recent series of papers based on the polymer representation of the Ising and Potts models 
(also known as the cluster expansion).  This began with the work ~\cite{HPR-Algorithmic-Pirogov-Sinai}, which gave polynomial-time sampling algorithms for the Potts model on $(\mathbb Z/n\mathbb Z)^d$ at
sufficiently low temperatures.  Further work~\cite{BCHPT-Potts-all-temp} derives similar results at all
temperatures, provided the parameter~$q$ is sufficiently large as a function of the dimension~$d$.
In a related direction, ~\cite{CGGPS,GGS} obtain rapid mixing results for
the so-called ``polymer dynamics'' at sufficiently low temperatures on expander graphs, and deduce polynomial mixing for Glauber dynamics restricted to a small
region around the ground states (measured in terms of the size of its largest polymer).  It should be noted that such approaches are ultimately based on 
convergence of the cluster expansion and are unlikely to cover the full low-temperature regime unless~$q$ is taken sufficiently large.

\par\medskip\noindent
{\bf Outline of paper.}
In Section~\ref{sec:prelim}, we recall necessary preliminaries on the Ising model and its Glauber dynamics. In Section~\ref{sec:relaxation-within-phase}, we prove the mixing time upper bounds of Theorems~\ref{thm:Ising-main} and~\ref{thm:Ising-plus-phase} assuming WSM within a phase holds. Then, in Section~\ref{sec:Ising-wsm-within-phase}, we prove Theorem~\ref{thm:Ising-wsm-within-phase-intro}, establishing that WSM within a phase does indeed hold at all low temperatures. In Section~\ref{sec:random-graphs}, we study the same question on the random regular graph and prove Theorem~\ref{thm:random-graph-mixing}.

\subsection*{Acknowledgements}
The authors thank the anonymous referees for their careful reading of the manuscript. The authors are grateful to Allan Sly for pointing out an error in the proof of an earlier version of Theorem~\ref{thm:Ising-main}, and to Etien Lo\"ic Fari\~na L\"uth for pointing out several typos in an earlier version of the paper. The authors also thank Fabio Martinelli and Allan Sly for useful discussions. R.G.\ thanks the Miller Institute for Basic Research in Science for its support. The research of A.S.\ is supported in part by NSF grant CCF-1815328.

\section{Preliminaries and notation}\label{sec:prelim}
In this section, we recap our notation, and important preliminaries about the Ising model and monotone Markov chains that we appeal to throughout the paper. 

\subsection{Underlying geometry}
For most of the paper (excluding Section~\ref{sec:random-graphs}), we will work on rectangular subgraphs of the $d$-dimensional lattice $\mathbb Z^d$,
with vertex set 
\[
    \Lambda_m := [-m,m]^d \cap \mathbb Z^d\,,
\]
and edge set $E(\Lambda_m) = \{\{u,v\}:d(u,v)=1\}$, where $d(\cdot,\cdot)$ is the $\ell_1$ distance in $\mathbb Z^d$. 
The (inner) boundary vertices of $\Lambda_m$ are denoted $\partial \Lambda_m= \{w\in \Lambda_m: d(w,\mathbb Z^d \setminus \Lambda_m) = 1\}$. 

For a vertex $v$ in $\mathbb Z^d$, we will frequently consider its ($\ell_\infty$) ball of radius $r$, which we denote $B_r(v)=\{w\in \mathbb Z^d: d_\infty(w,v)\le r\}$
where $d_\infty(\cdot,\cdot)$ is the $\ell_\infty$ distance in $\mathbb Z^d$. 

We also consider the torus $\mathbb T_m = (\mathbb Z/(2m\mathbb Z))^d$ with 
nearest neighbor edges, where the $\ell_1$ distance is measured modulo $2m$. One naturally identifies the fundamental domain of $\mathbb T_m$ with $\Lambda_m$, but with vertices on opposite sides identified with one another as one vertex---we denote this graph $\Lambda_m^p$ indicating its equivalence to $\Lambda_m$ with periodic boundary condition. 

\subsection{The Ising model}
Recall the definition of the Ising Gibbs distribution~\eqref{eqn:gibbs} on a graph $G$ at inverse temperature $\beta>0$. We denote the set of configurations of the model by  $\Omega = \{\pm 1\}^{V(G)}$. We indicate the underlying graph by a subscript, e.g., $\pi_{G}$. (The parameter $\beta$ will always be fixed, and thus its dependency is suppressed.) We use $\pi_G[\cdot]$ to denote the corresponding expectation. 

For an Ising configuration $\sigma\in \Omega$, let $M(\sigma) = \sum_{v\in V(G)} \sigma_v$ be its {\it magnetization}. We will frequently work with the following subsets of the configuration space, as indicated in the introduction: 
\[    \widehat \Omega= \{\sigma: M(\sigma) \ge 0\}\,, \qquad \mbox{and}\qquad \widecheck \Omega = \{\sigma: M(\sigma) \le 0\}\,.
\]We also use $\widehat \pi_G$ and $\widecheck\pi_G$ to denote the conditional distributions $\pi_G (\cdot \mid \widehat \Omega)$ and $\pi_G (\cdot \mid \widecheck \Omega)$ respectively. 

\subsubsection*{Ising phase transition on $\mathbb Z^d$}
The Ising model has a famous phase transition on $\mathbb Z^d$, manifested in finite volumes as follows. For every $d\ge 2$, there exists a critical $\beta_c(d)>0$ such that (1) for all $\beta<\beta_c(d)$, the spin-spin correlation $\pi_{\mathbb T_m}(\sigma_v = \sigma_w) - \frac 12$ decays to zero exponentially with $d(v,w)$; and (2) for all $\beta>\beta_c(d)$, $\pi_{\mathbb T_m}(\sigma_v = \sigma_w) - \frac 12$ is bounded away from zero, uniformly in $m$ and $v,w\in \mathbb T_m$.  

As a result, for $\beta>\beta_c(d)$ there is no spatial mixing of the form~\eqref{eqn:wsmintro},
and $(2m)^{-d} M(\sigma)$ converges to a two-atomic distribution at $\pm m_*(\beta)$ for some $m_*(\beta)>0$. 
The works~\cite{Pisztora96,Bodineau05} established a \emph{surface-order} large deviation principle: for all $\beta>\beta_c(d)$, for all $\epsilon<m_*(\beta)$, 
\begin{align}\label{eq:surface-order-LDP}
    \pi_{\mathbb T_m}\big((2m)^{-d}M(\sigma) \in [-\epsilon,\epsilon]\big) \le Ce^{ - m^{d-1}/C}\,.
\end{align}

\subsubsection{Boundary conditions}
When considering the Ising model on subgraphs of $\mathbb Z^d$, e.g., $\Lambda_m$, we will consider the model with \emph{boundary conditions}. A boundary condition is an arbitrary configuration $\eta\in \{\pm 1\}^{\mathbb Z^d}$. The Ising measure on $\Lambda_m$ with boundary condition~$\eta$ is the  conditional distribution 
$$\pi_{\Lambda_{m+1}}\big(\sigma_{\Lambda_m}\in \cdot \mid \sigma_{\partial \Lambda_{m+1}} = \eta_{\partial \Lambda_{m+1}}\big)\,,$$
where we write $\sigma_A$ for a configuration~$\sigma$ restricted to vertices in a subset~$A$.
We use $\pi_{\Lambda_m^\eta}$ to denote this conditional distribution over $\{\pm 1\}^{\Lambda_m}$, and use $\plus$ or $\plusone$ to denote the all $+1$ configuration, and $\minus$ or $\minusone$ to denote the all $-1$ configuration.

\subsection{Markov chains and monotonicity relations}
Consider a (discrete-time) ergodic Markov chain with transition matrix $P$ on a finite state space $\Omega$, reversible with respect to a stationary distribution~$\pi$. Denote the chain initialized from $x_0\in \Omega$ by $(X_t^{x_0})_{t \in \mathbb N}$. 
It is well known that, for any~$x_0$, the distance $d_{\tv}(x_0; t) := \|\mathbb P(X_{t}^{x_0}\in \cdot) -\pi\|_\tv$ is non-increasing with~$t$ and converges to zero as $t\to\infty$.  (Here
$\Vert\cdot\Vert_\tv$ is the total variation distance (i.e., half the $\ell_1$ norm).)
The {\it $\epsilon$-mixing time from initialization~$x_0$} is defined as $\tmix^{x_0}(\epsilon) = \min\{t: d_\tv(x_0;t) \le \epsilon\}$, and by convention, we set $\tmix^{x_0} = \tmix^{x_0}(1/4)$ and $\tmix = \max_{x_0}\tmix^{x_0}$. 

\subsubsection*{Continuous-time dynamics}
It will be convenient in our analysis to work in continuous rather than discrete time.  The
{\it continuous-time Glauber dynamics\/} on a graph~$G$ is defined as follows.
Assign every vertex an i.i.d.\ rate-1 Poisson clock; when the clock at $v$ rings, replace the spin $\sigma_v$ by a random spin sampled according to the correct conditional distribution given the spins on the neighbors of~$v$.  Recall (e.g., from~\cite[Theorem~20.3]{LP}) the standard fact that \begin{align}\label{eq:continuous-time-discrete-time-comparison}
   d_{\tv}^{\textrm{cont}}(x_0 ; C t) \le d_\tv(x_0; |V(G)| t)  \le  d_{\tv}^{\textrm{cont}}(x_0 ; C^{-1} t)\,,
\end{align}
for some absolute constant $C$, where $d_\tv^{\textrm{cont}}(x_0;t)$ is the total variation distance of the continuous-time dynamics at time $t$ initialized from $x_0$; i.e., the mixing times of the discrete- and continuous-time
dynamics differ only by a factor~$O(|V(G)|)$, where $|V(G)|$ is the number of vertices in $G$.  As such, 
for the $N^{1+o(1)}$ and $O(N\log N)$ mixing time results stated in the introduction, it suffices
to prove that the continuous-time dynamics has mixing time $N^{o(1)}$ and $O(\log N)$ respectively.
Abusing notation, from this point forth, we let $(X_t^{x_0})_{t\ge 0}$, and all other Ising Glauber dynamics we consider, be the continuous-time chains.

\subsubsection*{The grand coupling}
A standard tool in the study of Markov chains is the \emph{grand coupling}, which places chains with all possible initializations in a common probability space. 

\begin{definition}\label{def:grand-coupling}
Independently to each vertex~$v\in V(G)$ we assign an infinite sequence of times $(T_1^{v}, T_2^{v},...)$ given by the rings of a rate-1 Poisson clock, and an infinite sequence
$(U_{1}^{v},U_2^v,...)$ of $\mbox{Unif}[0,1]$ random variables. The dynamics $(X_{t}^{x_0})_{t\ge 0}$ is generated as follows (where $\pi$ denotes the stationary distribution):  
\begin{enumerate}
\item Set $X_0^{x_0} =x_0$.  Let $t_1< t_2<...$ be the times in $\bigcup_{k}\bigcup_{v}\{T_k^v\}$ in increasing order (almost surely, these times are distinct).
\item For $j\ge 1$, let $(v,k)$ be the unique pair such that $t_j=T_k^v$.
Set $X_t^{x_0}= X_{t_{j-1}}^{x_0}$ for all $t\in [t_{j-1},t_j)$; then set 
$X_{t_j}^{x_0}(\Lambda_n\setminus \{v\}) = X_{t_{j-1}}^{x_0}(\Lambda_n\setminus \{v\})$ and
    \begin{align}\label{eq:update-rule}
    X_{t_j}^{x_0}(v) = \begin{cases}+1 & \mbox{if } U_{k}^{v}\le \pi(\sigma_v = +1 \mid \sigma_{\Lambda_n\setminus v} = X_{t_{j-1}}^{x_0}(\Lambda_n \setminus v))  \\ 
    -1 &  \mbox{otherwise}
    \end{cases}\,.
    \end{align}
\end{enumerate}
\end{definition}

By using the same times $(T_1,T_2,...)_{v\in \mathbb Z^d}$ and uniform random variables $(U_1^v,U_2^v,...)_{v\in \mathbb Z^d}$, and taking $\pi = \pi_{A^\eta}$, the above construction gives a grand coupling of $(X_{A^\eta,t}^{x_0})_{t\ge 0}$, the Markov chains on $A\subset \mathbb Z^d$ with boundary condition $\eta$ initialized from all possible configurations~$x_0$. It is well known (and simple to check) that in the case of
the Ising model this coupling is
{\it monotone}, in the sense that if $x_0 \ge x_0'$ and $\eta \ge \eta'$, then $X_{A^\eta,t}^{x_0}\ge X_{A^{\eta'},t}^{x_0'}$ for all $t\ge 0$. (Monotonicity
does not hold for many other models, such as the Potts model.)

\subsubsection*{The restricted Glauber dynamics}
A crucial tool in our analysis will be the Glauber dynamics 
\emph{restricted to $\widehat \Omega$} (and, symmetrically, to~$\widecheck \Omega$).
This chain, denoted $(\widehat X_t^{x_0})_{t\ge 0}$, is defined exactly as in Definition~\ref{def:grand-coupling}, except that 
if the update in~\eqref{eq:update-rule} would cause~$\sigma$ not to be 
in~$\widehat \Omega$ we set $\widehat X_{t_j}^{x_0}(v) = +1$ deterministically (i.e.,
leave $\sigma_v$ unchanged).
It is easy to check that $(\widehat X_t)$ is a Markov chain reversible w.r.t.\ $\widehat \pi$.

\subsection{Notational disclaimers}
Throughout the paper, $d\ge 2$ and $\beta>\beta_c(d)$ will be fixed and understood from context. All our results should be understood as holding uniformly over sufficiently large $n$ or $m$. The letter $C>0$ will be used frequently, always indicating the existence of a constant that is independent of $r,n,m$ etc., but that may depend on $\beta, d$ and may differ from line to line. Finally, we use big-$O$, little-$o$ and big-$\Omega$ notation in the same manner, where the hidden constants may depend on $\beta, d$. 

\section{Mixing from random initializations given WSM within a phase}\label{sec:relaxation-within-phase}
In this section, we reduce the mixing time on the torus $\Lambda_n^p$ 
from $\nu^{\pmb{\pm}}$ to the mixing time at local $O(\log n)$ scales with $\plusone$ boundary condition, under the assumption of WSM within a phase. 
Let us formalize the following minimal notion of mixing at local scales, which only asks for a bound on the rate of exponential relaxation to equilibrium given a $\plusone$ initialization;  namely, suppose there exists a non-decreasing sequence $f(m)$ such that for all $m<n$ and all $t>0$, 
\begin{align}\label{eq:assumption-scale-m}
    \max_{v\in \Lambda_n}\|\mathbb P\big(X_{B_m^{\plus}(v),t}^{\plus}(v) \in \cdot)  - \pi_{B_m^{\plus}(v)}(\sigma_v \in \cdot)\|_\tv \le e^{ - t/f(m)}\,.
\end{align}
Of course by transitivity of the torus, the left-hand side is independent of $v$, so we may drop the maximum and just consider a box of side-length $m$ centered at some fixed $v\in \Lambda_n$ with $\plusone$ boundary condition.

A (worst-case) mixing time (or inverse spectral gap) bound of $f(m)$ for the Glauber dynamics on $\Lambda_m^{\plus}$ would automatically yield~\eqref{eq:assumption-scale-m}, but we use the above formulation in case one can obtain better bounds on the exponential rate of relaxation when initialized from the $\plusone$ configuration. 

Given such a sequence $f(m)$, fix a constant $K$ sufficiently large, and define the following function: 
\begin{align}\label{eq:g(t)}
    g_n(t) = \max\Big\{m \le n : mf(m) \le t\wedge  e^{ n^{d-1}/K}\Big\}\,.
\end{align}

We aim to prove the following general theorem, from which Theorem~\ref{thm:Ising-main} will follow (see Section~\ref{subsec:Ising-main-proof}). 

\begin{theorem}\label{thm:Ising-torus-restatement}
Suppose that WSM within a phase holds and suppose further that~\eqref{eq:assumption-scale-m} holds for a non-decreasing sequence $f(m)$. Let $g_n(t)$ be as in~\eqref{eq:g(t)} for $K$ large enough. Then for all $t\ge 0$,  
\begin{align*}
    \|\mathbb P\big(X_{\Lambda_n^p,t}^{\nu^{\pmb{\pm}}}\in \cdot) - \pi_{\Lambda_n^p}\|_\tv \le Cn^d e^{ - g_n(t)/C}\,.
\end{align*}
\end{theorem}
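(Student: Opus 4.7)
I would split the argument into a global reduction to a single phase, via the surface-order large-deviation estimate~\eqref{eq:surface-order-LDP}, followed by a local coupling argument inside that phase, using WSM within a phase together with the assumed local relaxation~\eqref{eq:assumption-scale-m}. The final bound on the whole torus will come from a monotone union bound over the $n^d$ vertices, which is responsible for the factor $n^d$ in the conclusion.

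\emph{Step 1 (reduction to the restricted chain).} First, \eqref{eq:surface-order-LDP} immediately gives
\begin{equation*}
    \bigl\|\pi_{\Lambda_n^p} - \tfrac{1}{2}\widehat\pi_{\Lambda_n^p} - \tfrac{1}{2}\widecheck\pi_{\Lambda_n^p}\bigr\|_\tv \le Ce^{-n^{d-1}/C}.
\end{equation*}
It also drives a bottleneck argument showing that the unrestricted chain $X^{\plus}_{\Lambda_n^p,t}$ does not exit $\widehat\Omega$ before time $\exp(n^{d-1}/K)$ except on an event of probability $Ce^{-n^{d-1}/C}$. Under the grand coupling, $X^{\plus}_t$ then coincides with the restricted chain $\widehat X^{\plus}_t$ on this event. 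Combining these with spin-flip symmetry and the triangle inequality reduces the theorem to establishing
\begin{equation*}
    \bigl\|\mathbb P(\widehat X^{\plus}_{\Lambda_n^p,t}\in\cdot) - \widehat\pi_{\Lambda_n^p}\bigr\|_\tv \le Cn^d e^{-g_n(t)/C}
\end{equation*}
for all $t \le \exp(n^{d-1}/K)$, i.e., to proving Theorem~\ref{thm:Ising-plus-phase}.

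\emph{Step 2 (local coupling for the restricted chain).} I would set $m = g_n(t)$, so that $mf(m) \le t$ by the definition~\eqref{eq:g(t)}. For each $v \in \Lambda_n^p$, I would estimate the marginal $\mathbb P(\widehat X^{\plus}_{\Lambda_n^p,t}(v) = +1)$ by triangulating through the intermediate $\pi_{B_m^{\plus}(v)}(\sigma_v \in \cdot)$. WSM within a phase at radius $m$ gives $\|\pi_{B_m^{\plus}(v)}(\sigma_v\in\cdot) - \widehat\pi(\sigma_v\in\cdot)\|_\tv \le Ce^{-m/C}$, while~\eqref{eq:assumption-scale-m} combined with $t \ge mf(m)$ gives $\|\mathbb P(X^{\plus}_{B_m^{\plus}(v),t}(v)\in\cdot) - \pi_{B_m^{\plus}(v)}(\sigma_v\in\cdot)\|_\tv \le e^{-m}$. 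Bridging these requires a two-sided comparison of $\widehat X^{\plus}_{\Lambda_n^p,t}(v)$ to the $\plus$-boundary box chain: the upper side follows from monotonicity of the grand coupling (fixed $\plus$ outside $B_m(v)$ dominates whatever the torus chain evolves to outside), and the matching lower side I would supply by running an auxiliary copy of the restricted chain from a sample of $\widehat\pi$ and using WSM within a phase once more in the annulus $B_{2m}(v)\setminus B_m(v)$ to force agreement with the $\plus$-boundary chain inside $B_m(v)$ up to $Ce^{-m/C}$. Lifting the vertex-marginal bound to TV on the whole torus uses the monotone union bound $\|\mu-\nu\|_\tv \le \sum_v |\mu(\sigma_v=+1)-\nu(\sigma_v=+1)|$ (valid for any stochastically ordered pair, as holds here by the grand coupling from $\plus$), producing the $n^d$ factor.

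\emph{Main obstacle.} The crux is the two-sided sandwich in Step 2. The upper bound via monotone coupling is routine, but the restricted chain $\widehat X^{\plus}$ carries a global magnetization constraint with no local analog inside $B_m^{\plus}(v)$, so there is no direct monotone lower bound by a box chain. The matching lower bound must screen the long-range effect of the magnetization constraint on the spin at $v$ at the scale $m$; this screening is exactly what WSM within a phase provides at the level of $\widehat\pi$, but operationalizing it as a coupling that remains compatible with the grand coupling on the torus---so that both the monotone union bound and the Step 1 coupling between $X^{\plus}$ and $\widehat X^{\plus}$ continue to hold---is what the bulk of the work amounts to, and is precisely why~\eqref{eq:g(t)} caps the time horizon at $\exp(n^{d-1}/K)$.
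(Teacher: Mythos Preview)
Your overall architecture matches the paper's exactly: reduce to the restricted chain via~\eqref{eq:surface-order-LDP} and spin-flip symmetry, control single-site marginals of $\widehat X_t^{\plus}$ by triangulating through $\pi_{B_m^{\plus}(v)}$ with $m=g_n(t)$, and lift to total variation by a monotone union bound over the $n^d$ sites. The upper half of your Step~2 sandwich is precisely what the paper does.

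Where you overcomplicate things is the lower half. You flag ``screening the long-range effect of the magnetization constraint'' as the crux and propose a second application of WSM within a phase in an annulus. In fact no such argument is needed: the paper's lower bound is one line. Run the restricted chain from a draw of $\widehat\pi$; this chain is \emph{stationary}, so $\mathbb P(\widehat X_t^{\widehat\pi}(v)=+1)=\widehat\pi(\sigma_v=+1)$ exactly. Under the grand coupling one has $\widehat X_t^{\widehat\pi}=X_t^{\widehat\pi}\le X_t^{\plus}=\widehat X_t^{\plus}$ on the event $\{\widehat\tau^{\widehat\pi}>t\}$ (this is Claim~\ref{clm:monotonicity-relations}), whence
\[
\mathbb P(\widehat X_t^{\plus}(v)=+1)-\widehat\pi(\sigma_v=+1)\ge -\mathbb P(\widehat\tau^{\widehat\pi}\le t)\ge -Ce^{-n^{d-1}/C}
\]
for $t\le e^{n^{d-1}/K}$. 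So WSM within a phase is used only once, for the upper bound; the lower bound is pure stationarity plus the hitting-time estimate. Your ``main obstacle'' dissolves once you recognize that you do not need to compare the $\widehat\pi$-initialized chain to any box chain.

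One related caution: your monotone union bound $\|\mu-\nu\|_\tv\le\sum_v|\mu(\sigma_v=+1)-\nu(\sigma_v=+1)|$ is asserted for ``any stochastically ordered pair, as holds here by the grand coupling from $\plus$''. But the restricted chain $\widehat X$ is \emph{not} monotone, so $\mathbb P(\widehat X_t^{\plus}\in\cdot)$ and $\widehat\pi$ are not a priori stochastically ordered. The paper handles this the same way: switch from $\widehat X$ to the unrestricted $X$ on $\{\widehat\tau^{\widehat\pi}>t\}$ via Claim~\ref{clm:monotonicity-relations}, apply the monotone union bound to $X_t^{\plus}$ versus $X_t^{\widehat\pi}$, and then switch back, each switch costing $\mathbb P(\widehat\tau^{\widehat\pi}\le t)$. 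You gesture at this in Step~1 but should make explicit that it is needed again in Step~2.
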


Let us pause to interpret the above theorem, and what the quantity $g_n(t)$ will typically look like.  In order for the dynamics on the torus $\Lambda_n^p$ from the random $\nu^{\pmb{\pm}}$ initialization to be mixed, we need $Cn^d e^{ - g_n(t)/C}$ to be $o(1)$, for which in turn we need $g_n(t)$ to be $C'\log n$ for some $C'$ large. Now notice that if $f(m)$ is at least polynomial in $m$ (as it will be in any bounds we apply), then $g_n(t)$ is roughly the same as $f^{-1}(t)$.
Setting $f^{-1}(t) \approx C'\log n$, we see that $t$ needs to be approximately $f(C'\log n)$. Thus, under the assumption of WSM within a phase, the mixing time on a torus of side length~$n$ from the random initialization~$\nu^{\pmb{\pm}}$ is reduced to the mixing time on a box with $\plusone$ boundary of side length $C'\log n$. 

The key step in the proof of Theorem~\ref{thm:Ising-torus-restatement} is a
bound on the rate of convergence of the restricted chain $\widehat X_t^{\plus}$ defined in Section~\ref{sec:prelim} to the plus phase measure $\widehat \pi_{\Lambda_n^p}$. We expect this to be of independent interest. 

\begin{theorem}\label{thm:Ising-mixing-restricted-chain}
Suppose that WSM within a phase holds and suppose further that~\eqref{eq:assumption-scale-m} holds for some non-decreasing sequence $f(m)$. Let $g_n(t)$ be as in~\eqref{eq:g(t)} for $K$ large enough. Then for all $t\ge 0$,
\begin{align*}
    \|\mathbb P(\widehat X_{\Lambda_n^p,t}^{\plus}\in \cdot)-\widehat \pi_{\Lambda_n^p}\|_\tv \le Cn^d e^{ - g_n(t)/C}\,.
\end{align*}
\end{theorem}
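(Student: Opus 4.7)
Set $m = g_n(t)$, so $mf(m)\le t\wedge e^{n^{d-1}/K}$. The plan is to construct a coupling of $\widehat X^{\plus}_{\Lambda_n^p, t}$ with a sample from $\widehat\pi_{\Lambda_n^p}$ in which the probability of disagreement at each fixed vertex $v$ is $O(e^{-m/C})$; a union bound over $v\in \Lambda_n^p$ then yields the claimed total variation estimate.

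\textbf{Step 1: removing the restriction.} Within the time window $t\le e^{n^{d-1}/K}$ forced by the cap in $g_n$, I would first show that the restricted chain $\widehat X^{\plus}_{\Lambda_n^p}$ coincides with the unrestricted Glauber dynamics $X^{\plus}_{\Lambda_n^p}$ under the grand coupling, except on an event of probability $O(e^{-n^{d-1}/C})$. This follows from the surface-order large deviation principle~\eqref{eq:surface-order-LDP} combined with a first-passage argument, showing that $M(X^{\plus}_{\Lambda_n^p, s})>0$ for every $s\in [0,t]$, so that the censoring defining $\widehat X$ never activates. Symmetrically, if $\widehat\sigma_0\sim \widehat\pi_{\Lambda_n^p}$ and $Y^{\widehat\sigma_0}_t$ denotes the unrestricted chain from $\widehat\sigma_0$, then stationarity of $\widehat\pi$ for the restricted chain, combined with the same LDP bound, yields $Y^{\widehat\sigma_0}_t\sim \widehat\pi_{\Lambda_n^p}$ up to TV error $O(e^{-n^{d-1}/C})$. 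Choosing $K$ large absorbs these terms into the desired $Cn^d e^{-g_n(t)/C}$.

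\textbf{Step 2: per-vertex bound via monotonicity, local mixing, and WSM within a phase.} Pair $X^{\plus}_{\Lambda_n^p,t}$ with $Y^{\widehat\sigma_0}_t$ via the grand coupling on $\Lambda_n^p$. By monotonicity $X^{\plus}_{\Lambda_n^p,t}\succeq Y^{\widehat\sigma_0}_t$, so a disagreement at $v$ occurs with probability exactly $\mathbb P(X^{\plus}_{\Lambda_n^p,t}(v)=+1)-\mathbb P(Y^{\widehat\sigma_0}_t(v)=+1)$, and by Step 1 the second term equals $\widehat\pi(\sigma_v=+1)$ up to the exponentially small error. For the first term I introduce a second grand coupling with the local chain $X^{\plus}_{B_m^{\plus}(v),t}$: since its fixed $\plusone$ boundary on $\partial B_m(v)$ dominates the torus chain's (time-varying) values there, and both initializations are $\plusone$ inside $B_m(v)$, monotonicity propagates to give $X^{\plus}_{\Lambda_n^p,t}(v)\le X^{\plus}_{B_m^{\plus}(v),t}(v)$. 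Chaining the approximations
\[
\mathbb P(X^{\plus}_{B_m^{\plus}(v),t}(v)=+1)\ \approx\ \pi_{B_m^{\plus}(v)}(\sigma_v=+1)\ \approx\ \widehat\pi(\sigma_v=+1),
\]
with errors $e^{-t/f(m)}\le e^{-m}$ from~\eqref{eq:assumption-scale-m} and $Ce^{-m/C}$ from WSM within a phase~\eqref{eqn:wsmphaseintro}, bounds the per-vertex disagreement probability by $O(e^{-g_n(t)/C})$.

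\textbf{Main obstacle.} The most delicate ingredient is Step 1: promoting the \emph{equilibrium} surface-order LDP~\eqref{eq:surface-order-LDP} into the \emph{pathwise} statement that the magnetization $M(X^{\plus}_{\Lambda_n^p,s})$ stays positive \emph{simultaneously} for every $s\in [0,t]$, and likewise for the chain started from $\widehat\sigma_0\sim \widehat\pi$. A single-time LDP controls only one time slice, so the pathwise version requires combining monotonicity with a careful first-passage / union bound argument, and the constant $K$ in the cap of $g_n$ must be calibrated to absorb the unavoidable factor of $t$ in this union bound. Once this pathwise control is in place, the monotone comparison with the local chain, together with~\eqref{eq:assumption-scale-m} and WSM within a phase, combine cleanly to yield the per-vertex bound.
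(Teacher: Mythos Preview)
Your proposal is correct and follows essentially the same approach as the paper: the paper first isolates the single-site bound as Proposition~\ref{prop:single-site-relaxation} (your Step~2 plus the hitting-time control of Step~1) and then sums over vertices exactly as you describe, using Claim~\ref{clm:monotonicity-relations} and Lemma~\ref{lem:zero-magnetization-hitting-time} to handle the restriction. Your ``main obstacle'' is precisely what the paper addresses via the observation that under the grand coupling $\widehat\tau^{\plus}\ge\widehat\tau^{\widehat\pi}$, so the pathwise control for the $\plusone$ chain is inherited for free from the stationary chain, where a union bound over the (Poisson many) update times against the equilibrium LDP suffices.
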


For ease of notation, we will drop $\Lambda_n^p$ subscripts, writing $\widehat \pi = \widehat \pi_{\Lambda_n^p}$, $X_{t}^\sigma = X_{\Lambda_n^p,t}^\sigma$, and $\widehat X_t^\sigma = \widehat X_{\Lambda_n^p,t}^{\sigma}$. 

\subsection{Single-site relaxation within a phase}
The goal of this subsection is to prove the following rate of relaxation of the single-site marginals of $X_t^{\plus}$ to $\widehat \pi$. This will be the main ingredient in proving Theorem~\ref{thm:Ising-mixing-restricted-chain}. 

\begin{proposition}\label{prop:single-site-relaxation}
Suppose WSM within a phase holds, and suppose~\eqref{eq:assumption-scale-m} holds for some non-decreasing sequence $f(m)$. Let $g_n(t)$ be as in~\eqref{eq:g(t)} for $K$ large enough. Then for every $v\in \Lambda_n$, for all $t\le e^{n^{d-1}/K}$, 
\begin{align*}
    |\mathbb P(\widehat X_{t}^{\plus}(v) = +1) - \widehat \pi(\sigma_v = +1)| \le Ce^{ - g_n(t)/C}\,.
\end{align*}
\end{proposition}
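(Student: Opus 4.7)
The plan is to fix $m = g_n(t)$ and compare three coupled chains initialized from $\plusone$: the restricted torus chain $\widehat X_t^{\plus}$, the unrestricted torus chain $X_t^{\plus}$, and the chain $Y_t := X_{B_m^{\plus}(v),t}^{\plus}$ on the box $B_m(v)$ with $\plusone$ boundary condition. Under the grand coupling of Definition~\ref{def:grand-coupling}, monotonicity yields $\widehat X_s^{\plus} \ge X_s^{\plus}$ pathwise (the restriction only forces spins upward) and $Y_s \ge X_s^{\plus}$ on $B_m(v)$ (boundary monotonicity).

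By monotonicity of the restricted chain initialized from the maximal element, $p(t) := \mathbb P(\widehat X_t^{\plus}(v) = +1)$ is non-increasing in $t$ and $p(t) \ge \hat p := \widehat \pi(\sigma_v = +1)$, so it suffices to upper bound $p(t) - \hat p$. I would decompose this via
\begin{align*}
p(t) - \hat p \le {}& \bigl[p(t) - \mathbb P(X_t^{\plus}(v) = +1)\bigr] + \bigl[\mathbb P(X_t^{\plus}(v) = +1) - \mathbb P(Y_t(v) = +1)\bigr] \\
& + \bigl[\mathbb P(Y_t(v) = +1) - \pi_{B_m^{\plus}(v)}(\sigma_v = +1)\bigr] + \bigl[\pi_{B_m^{\plus}(v)}(\sigma_v = +1) - \hat p\bigr].
\end{align*}
The second bracket is non-positive by boundary monotonicity. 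The third bracket is bounded by $e^{-t/f(m)}$ via~\eqref{eq:assumption-scale-m}, and the choice $m = g_n(t)$ (which ensures $mf(m) \le t$) gives $e^{-m}$. The fourth bracket is at most $Ce^{-m/C}$ by WSM within a phase (Theorem~\ref{thm:Ising-wsm-within-phase-intro}) applied at radius $m$ around $v$.

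The first bracket, using $\widehat X \ge X$ pathwise, equals $\mathbb P(\widehat X_t^{\plus}(v)=+1,\,X_t^{\plus}(v)=-1)$ and is bounded by $\mathbb P_{\plusone}(\tau_{\widecheck \Omega} \le t)$, since the restriction is activated only at the first time the unrestricted chain would cross into $\widecheck \Omega$. I would then invoke the standard fact that at $\beta > \beta_c(d)$ on the torus $\Lambda_n^p$,
\begin{align*}
\mathbb P_{\plusone}(\tau_{\widecheck \Omega} \le t) \le C(1 + t)\, e^{-n^{d-1}/C},
\end{align*}
which follows either from a Peierls/contour argument or from a Cheeger-type bottleneck bound based on the surface-order LDP~\eqref{eq:surface-order-LDP}: any passage from $\widehat\Omega$ to $\widecheck\Omega$ occurs through configurations with $|M|\le 2$, and such configurations carry $\pi$-mass $\le Ce^{-n^{d-1}/C}$. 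For $t \le e^{n^{d-1}/K}$ with $K$ sufficiently large, this is at most $Ce^{-n^{d-1}/C'}$ for some $C'>0$, and since $g_n(t) \le n \le n^{d-1}$, this is at most $Ce^{-g_n(t)/C'}$.

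Combining the four bounds with $m = g_n(t)$ yields $p(t) - \hat p \le Ce^{-g_n(t)/C}$, as claimed. The main technical obstacle is the escape-time estimate: one cannot directly deduce it from stochastic dominance of the restricted chain by its stationary version, because the grand coupling does not in general preserve the partial order under the restriction (the rejection at the boundary of $\widehat\Omega$ can invert the order at the updated vertex). Working instead with the unrestricted chain, whose grand coupling \emph{is} monotone, and applying a Peierls contour count or a conductance argument rooted in~\eqref{eq:surface-order-LDP}, is the technical heart of the step and the reason the statement is restricted to $t\le e^{n^{d-1}/K}$.
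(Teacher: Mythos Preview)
Your upper-bound decomposition is essentially the same as the paper's, and the handling of the second, third, and fourth brackets matches. The genuine gap is your claim that ``by monotonicity of the restricted chain initialized from the maximal element, $p(t)$ is non-increasing and $p(t)\ge\hat p$.'' As you yourself note at the end, the grand coupling does \emph{not} preserve the partial order for the restricted chain: when the rejection rule fires for the lower configuration but not the upper one, the order at that site inverts. So you have no justification for $p(t)\ge\hat p$, and hence no justification for dropping the absolute value. The paper does not assume this; instead it proves a lower bound directly: on the event $\{\widehat\tau^{\widehat\pi}>t\}$, Claim~\ref{clm:monotonicity-relations} gives $\widehat X_t^{\plus}=X_t^{\plus}\ge X_t^{\widehat\pi}=\widehat X_t^{\widehat\pi}$, whence $\widehat P_t f_v(\plus)\ge -\mathbb P(\widehat\tau^{\widehat\pi}\le t)$, and Lemma~\ref{lem:zero-magnetization-hitting-time} bounds the latter.

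For the first bracket (the escape-time estimate), your proposed Peierls or Cheeger-type argument is vague and not how the paper proceeds. The paper's route is cleaner: by Claim~\ref{clm:monotonicity-relations}, $\widehat\tau^{\plus}\ge\widehat\tau^{\widehat\pi}$ under the grand coupling, so it suffices to bound $\mathbb P(\widehat\tau^{\widehat\pi}\le t)$. But the \emph{restricted} chain initialized from $\widehat\pi$ is stationary, so at each clock ring it lies in $\partial\widehat\Omega$ with probability $\widehat\pi(\partial\widehat\Omega)\le 2\pi(|M(\sigma)|\le 1)$; a union bound over clock rings together with~\eqref{eq:surface-order-LDP} gives $\mathbb P(\widehat\tau^{\widehat\pi}\le t)\le C(t\vee 1)e^{-n^{d-1}/C}$. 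This stationarity trick is exactly what sidesteps the non-monotonicity of $\widehat X$ and is the piece your proposal is missing.
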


We first overcome the absence of monotonicity of the restricted chain using the observation that its typical hitting time to $\partial \widehat \Omega$ is exponentially long. Throughout the paper, let us denote by $\widehat \tau^{x_0}$ the hitting time to $\partial \widehat \Omega$ of the restricted dynamics $\widehat X_{t}^{x_0}$. We use $\widehat \tau^{\widehat \pi}$ to denote the random variable given by $\widehat \tau^{x_0}$ where $x_0$ is drawn from $\widehat \pi$. By Definition~\ref{def:grand-coupling} and monotonicity, together with the definition of the restricted dynamics, one observes the following.

\begin{claim}\label{clm:monotonicity-relations}
For every $0\le t\le \widehat \tau^{\widehat \pi}$, we have  $\widehat X_{t}^{\widehat \pi} = X_{t}^{\widehat \pi} \le  X_{t}^{\plus} = \widehat X_{t}^{\plus}$ under the grand coupling. 
\end{claim}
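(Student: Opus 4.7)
The approach is to verify each of the three parts of the chain $\widehat X_t^{\widehat \pi}=X_t^{\widehat \pi}\le X_t^{\plus}=\widehat X_t^{\plus}$ separately, directly from Definition~\ref{def:grand-coupling} and the definition of the restricted chain.

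The middle inequality is immediate: any sample from $\widehat \pi$ lies pointwise below $\plusone$, and the standard monotonicity of the unrestricted grand coupling recalled in Section~\ref{sec:prelim} propagates this to $X_t^{\widehat \pi}\le X_t^{\plus}$ for every $t\ge 0$. For the first equality, I would observe that the restricted chain $\widehat X_t^{x_0}$ and its unrestricted counterpart $X_t^{x_0}$ coincide under the grand coupling until the first ``rejection'', and that a rejection requires an attempted flip from $+1$ to $-1$ at a configuration with $M=0$, since $M$ changes in steps of $2$. Before time $\widehat \tau^{\widehat \pi}$ the chain $\widehat X^{\widehat \pi}$ has, by definition, not reached $\partial \widehat \Omega=\{M=0\}$, so no rejection has been triggered, and $\widehat X_t^{\widehat \pi}=X_t^{\widehat \pi}$ throughout $[0,\widehat \tau^{\widehat \pi}]$.

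The only step requiring care is the last equality, since, as emphasized in the introduction, the restricted chain is \emph{not} monotone under the grand coupling: one cannot simply assert $\widehat X_t^{\widehat \pi}\le \widehat X_t^{\plus}$. Instead I would route the comparison through the unrestricted chain. Combining the first two steps, for every $s<\widehat \tau^{\widehat \pi}$ one has
\[
M(X_s^{\plus})\;\ge\;M(X_s^{\widehat \pi})\;=\;M(\widehat X_s^{\widehat \pi})\;\ge\;2,
\]
where the last bound uses that $\widehat X^{\widehat \pi}$ has not yet hit $M=0$ and that $M$ jumps in increments of $2$. Hence $X^{\plus}$ stays bounded away from $\partial \widehat \Omega$ throughout $[0,\widehat \tau^{\widehat \pi})$, so no rejection can have been triggered in $\widehat X^{\plus}$; and at $s=\widehat \tau^{\widehat \pi}$ itself, the same bound $M(X_{s-}^{\plus})\ge 2$ guarantees that whichever update the grand coupling performs leaves the magnetization nonnegative, so again no rejection occurs. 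Thus $X_t^{\plus}=\widehat X_t^{\plus}$ on all of $[0,\widehat \tau^{\widehat \pi}]$, closing the chain.

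The single conceptual obstacle is the non-monotonicity of the restricted chain; the argument above sidesteps it by leveraging the strict positivity of the magnetization of the ``sandwich'' chain $X^{\widehat \pi}$ to rule out rejections in $\widehat X^{\plus}$ on $[0,\widehat \tau^{\widehat \pi}]$.
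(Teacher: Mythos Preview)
Your proof is correct and follows exactly the approach the paper has in mind: the paper treats the claim as an immediate observation from Definition~\ref{def:grand-coupling}, monotonicity of the unrestricted grand coupling, and the definition of the restricted chain, and you have simply unpacked that observation carefully. Your handling of the endpoint $t=\widehat\tau^{\widehat\pi}$ and your use of $M(X_s^{\plus})\ge M(\widehat X_s^{\widehat\pi})\ge 2$ on $[0,\widehat\tau^{\widehat\pi})$ to rule out rejections in $\widehat X^{\plus}$ (via an implicit induction over clock rings) is the right way to make the ``observation'' rigorous.
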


Claim~\ref{clm:monotonicity-relations} leads to the following lower bound on $\widehat \tau^{\widehat \pi}\le \widehat \tau^{\plus}$. 

\begin{lemma}\label{lem:zero-magnetization-hitting-time}
    For every $t\ge 0$, we have $\mathbb P(\widehat \tau^{\plus}\le t) \le \mathbb P(\widehat \tau^{\widehat \pi}\le t) \le C (t\vee 1) e^{ - n^{d-1}/C}$. 
\end{lemma}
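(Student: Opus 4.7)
The plan is to handle the two inequalities separately: the first via monotonicity under the grand coupling, the second via stationarity of $\widehat\pi$ combined with the surface-order large deviation bound~\eqref{eq:surface-order-LDP}.

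For the first inequality, I would establish the stronger a.s.\ pointwise statement $\widehat\tau^{\widehat\pi}\le\widehat\tau^{\plus}$ under the grand coupling. Arguing by contradiction, suppose $\widehat\tau^{\plus}<\widehat\tau^{\widehat\pi}$ on some event. At time $s=\widehat\tau^{\plus}<\widehat\tau^{\widehat\pi}$, Claim~\ref{clm:monotonicity-relations} applies to give $\widehat X_s^{\widehat\pi}\le\widehat X_s^{\plus}$. Since $\widehat X_s^{\plus}$ has just entered $\partial\widehat\Omega$, its magnetization equals zero; coordinatewise domination forces $M(\widehat X_s^{\widehat\pi})\le 0$, and combined with the restriction $\widehat X_s^{\widehat\pi}\in\widehat\Omega$, this forces $\widehat X_s^{\widehat\pi}\in\partial\widehat\Omega$, contradicting $s<\widehat\tau^{\widehat\pi}$. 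Hence $\{\widehat\tau^{\plus}\le t\}\subseteq\{\widehat\tau^{\widehat\pi}\le t\}$, giving the first inequality.

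For the second inequality, I would decompose
\[
\mathbb P(\widehat\tau^{\widehat\pi}\le t)\;\le\;\widehat\pi(\partial\widehat\Omega)\;+\;\mathbb P\bigl(\widehat\tau^{\widehat\pi}\in(0,t]\bigr).
\]
Transitions of the restricted dynamics occur only at the Poisson clock rings, so any entry into $\partial\widehat\Omega$ during $(0,t]$ must coincide with such a ring. Let $N(t)$ denote the number of rings in $[0,t]$, so that $\mathbb E[N(t)]=n^d t$. By stationarity of $\widehat\pi$ under the restricted dynamics, the state immediately after any given ring has marginal $\widehat\pi$; conditioning on $N(t)$ and union-bounding over clock rings then yields $\mathbb P(\widehat\tau^{\widehat\pi}\in(0,t])\le n^d t\cdot\widehat\pi(\partial\widehat\Omega)$. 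It remains to estimate $\widehat\pi(\partial\widehat\Omega)$: spin-flip symmetry of $\Lambda_n^p$ gives $\pi(\widehat\Omega)\ge 1/2$, whence $\widehat\pi(\partial\widehat\Omega)\le 2\pi(\partial\widehat\Omega)$, and since $\partial\widehat\Omega$ consists of configurations with (normalized) magnetization at the threshold zero, the surface-order large deviation bound~\eqref{eq:surface-order-LDP} yields $\pi(\partial\widehat\Omega)\le Ce^{-n^{d-1}/C}$. Assembling, $\mathbb P(\widehat\tau^{\widehat\pi}\le t)\le(1+n^d t)\cdot 2Ce^{-n^{d-1}/C}$; absorbing the polynomial factor $n^d$ into the exponential (at the cost of enlarging $C$) gives the claimed $C(t\vee 1)e^{-n^{d-1}/C}$.

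The main subtlety---rather than a genuine obstacle---is in the first step: Claim~\ref{clm:monotonicity-relations} only controls the coupled chains up to time $\widehat\tau^{\widehat\pi}$, so the contradiction must be set up on the event $\widehat\tau^{\plus}<\widehat\tau^{\widehat\pi}$, where the domination is still in force. A minor additional technicality is justifying that the post-clock-ring marginal matches $\widehat\pi$, which is standard: the discrete-time skeleton sampled at clock rings of a stationary continuous-time chain is itself stationary.
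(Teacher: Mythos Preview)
Your proposal is correct and follows essentially the same approach as the paper: the first inequality via the pointwise domination of Claim~\ref{clm:monotonicity-relations}, and the second via stationarity of $\widehat\pi$ plus a union bound over clock rings together with~\eqref{eq:surface-order-LDP}. The only minor difference is in how you control the number of clock rings: you use the expectation $\mathbb E[N(t)]=n^d t$ directly (effectively Wald's identity), whereas the paper instead uses a high-probability upper bound on $N(t)$ and pays an extra $Ce^{-n^{d-1}/C}$ for its tail; your route is slightly cleaner but otherwise equivalent.
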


\begin{proof}
Fix the sequence of times $t_1,t_2,...$ at which some clock rings in $\Lambda_n$: note that this is distributed as a Poisson clock with rate $|\Lambda_n|$. By definition of the Glauber dynamics, even conditionally on this clock sequence, for all $t$, the law of $\widehat X_{t}^{\widehat \pi}$ is stationary, and thus distributed as $\widehat \pi$. The number of clock rings by time $t$ is at most $n^{d-1}|\Lambda_n|(t\vee 1)$ except with probability $Ce^{- n^{d-1}/C}$. Therefore, we have 
$$\mathbb P(\widehat \tau^{\widehat \pi} \le t) \le Ce^{ - n^{d-1}/C} + \sum_{i\le n^{d-1}|\Lambda_n|(t\vee 1)} \mathbb P(\widehat X_{t_i}^{\widehat \pi}\in  \partial \widehat \Omega) \le Ce^{ - n^{d-1}/C} + Cn^{2d-1}(t\vee 1) \widehat \pi(\partial \widehat \Omega)\,,$$
which implies the desired bound via~\eqref{eq:surface-order-LDP} and $\widehat \pi(\partial \widehat \Omega)\le 2 \pi(|M(\sigma)|\le 1)$. 
\end{proof}

We are now in a position to provide a proof of Proposition~\ref{prop:single-site-relaxation}. 

\begin{proof}[\textbf{\emph{Proof of Proposition~\ref{prop:single-site-relaxation}}}]
Fix $t$ and $v\in \Lambda_n$ and consider the quantity on the left-hand side of the proposition. For ease of notation, define 
\begin{align}\label{eq:P-t-f-v}
    \widehat P_t f_v(\plus) : = \mathbb P(\widehat X_t^{\plus}(v) = +1) - \widehat \pi(\sigma_v = +1),
\end{align}
so that the quantity we wish to control is $|P_t f_v(\plus)|$. 
We first derive a simple lower bound on $P_t f_v(\plus)$, namely under the grand coupling,
\begin{align*}
    \widehat P_t f_v(\plus) = \mathbb E[\mathbf 1\{\widehat X_t^{\plus}(v) = +1\} - \mathbf  1\{\widehat X_t^{\widehat \pi}(v) = +1\}] \ge - \mathbb P(\widehat \tau^{\widehat \pi}\le t)\,,
\end{align*}
since by Claim~\ref{clm:monotonicity-relations} the difference of the two indicators is non-negative on $\{\widehat \tau^{\widehat \pi}>t\}$. Thus, by Lemma~\ref{lem:zero-magnetization-hitting-time}, while $t \le e^{ n^{d-1}/K}$ for a sufficiently large $K$, we get $\widehat P_t f_v(\plus) \ge - Ce^{ - n^{d-1}/C}$. Since $g_n(t)\le n$, we also get $\widehat P_t f_v(\plus) \ge -Ce^{ - g_n(t)/C}$. 

We now derive an upper bound on $\widehat P_t f_v(\plus)$. For any $m = m(t) \le n$, by monotonicity of the Glauber dynamics, 
\begin{align*}
    \widehat P_t f_v(\plus) \le  \mathbb P(\widehat X_{t}^{\plus}(v) = +1) - \mathbb P(X_{t}^{\plus}(v)= +1)  + \mathbb P(X_{B_m^{\plus}(v),t}^{\plus}(v) = +1) - \widehat \pi(\sigma_v = +1)\,.
\end{align*}
Here recall that $X_{B_m^{\plus}(v),t}^{\plus}$ is the Glauber dynamics on $B_m^{\plus}(v)$ initialized from the $\plusone$ configuration. 
The difference of the first two terms on the right is at most $\mathbb P(\widehat \tau^{\widehat \pi}\le t)$ in absolute value by Claim~\ref{clm:monotonicity-relations}. Thus, by a triangle inequality, we obtain
\begin{align*}
   \widehat P_t f_v(\plus) \le \mathbb P(\widehat \tau^{\widehat \pi}\le t) & + |\pi_{B_m^{\plus}(v)}(\sigma_v = +1) - \widehat \pi(\sigma_v = +1)| \\
   &   + |\mathbb P(X_{B_m^{\plus}(v),t}^{\plus}(v) = +1)  - \pi_{B_m^{\plus}(v),t}(\sigma_v = +1)|\,.
    \end{align*}
The first term is at most $Ce^{ - n^{d-1}/C}$ by Lemma~\ref{lem:zero-magnetization-hitting-time} while $t\le e^{ n^{d-1}/K}$. The second term is exactly the quantity bounded by WSM within a phase, so that by~\eqref{eqn:wsmphaseintro} it is at most $Ce^{ - m/C}$. The third term is exactly the quantity controlled by the assumption in~\eqref{eq:assumption-scale-m}. Combining these three estimates, we find that while $t\le e^{ n^{d-1}/K}$, 
\begin{align*}
   \widehat P_t f_v(\plus) \le Ce^{ - n^{d-1}/C} + Ce^{ - m/C} + Ce^{-t/f(m)} \,.
\end{align*}
Now, taking $m = g_n(t)$, we see that $m\le n$ so that the first term is at most $Ce^{ - m/C}$ and $mf(m) \le t$, so that the third term is also at most $Ce^{ - m}$. Together with the earlier lower bound on $\widehat P_t f_v(\plus)$, we obtain the desired inequality. 
\end{proof}

\subsection{Proofs of Theorems~\ref{thm:Ising-torus-restatement} and \ref{thm:Ising-mixing-restricted-chain}}\label{subsec:fast-mixing-torus}
To prove these theorems
from Proposition~\ref{prop:single-site-relaxation}, it remains to reduce the total variation distance between $\widehat X_t^{\plus}$ and $X_{t}^{\nu^{{\pmb{\pm}}}}$ and their respective stationary distributions to sums of the distances of their one-point marginals. Such a bound is not true in general, but 
can be obtained in our setting using the monotonicity over sub-exponential timescales implied by Claim~\ref{clm:monotonicity-relations} and Lemma~\ref{lem:zero-magnetization-hitting-time}.

\begin{proof}[\textbf{\emph{Proof of Theorem~\ref{thm:Ising-mixing-restricted-chain}}}]
Consider the total variation distance 
\begin{align*}
    \|\mathbb P(\widehat X_{t}^{\plus}\in \cdot) - \widehat \pi\|_\tv = \|\mathbb P(\widehat X_{t}^{\plus}\in \cdot) - \mathbb P(\widehat X_{t}^{\widehat \pi}\in\cdot)\|_\tv\,.
\end{align*}
Using the standard definition of variation distance as the probability of disagreement under an \emph{optimal} coupling, we can bound the above by the following probability of disagreement under the grand coupling: 
\begin{align*}
    \mathbb P\big(\widehat X_{t}^{\plus} \ne \widehat X_{t}^{\widehat \pi}\big) & \le \mathbb E\big[ \mathbf 1\{\widehat X_{t}^{\plus} \ne \widehat X_{t}^{\widehat \pi}\} \mathbf 1\{\widehat \tau^{\widehat \pi} > t\}\big] + \mathbb P(\widehat \tau^{\widehat \pi} \le t) \le \mathbb P\big(X_{t}^{\plus} \ne X_{t}^{\widehat \pi}\big) + \mathbb P(\widehat \tau^{\widehat \pi} \le t)\,,
\end{align*}
where in the second inequality we used Claim~\ref{clm:monotonicity-relations}. By Lemma~\ref{lem:zero-magnetization-hitting-time}, the second term on the right is at most $Ce^{ - n^{d-1}/C}$ while $t \le e^{n^{d-1}/K}$.
By monotonicity of the grand coupling, the first term is at most  
\begin{align*}
    \sum_{v\in \Lambda_n^p} \Big(\mathbb P(X_{t}^{\plus}(v) & = +1)  - \mathbb P(X_{t}^{\widehat \pi}(v) = +1)\Big) \\
     & \le \sum_{v\in \Lambda_n^p} \Big(\mathbb P(X_t^{\plus}(v) = +1, \widehat \tau^{\widehat \pi} > t) - (1 - \mathbb P(X_{t}^{\widehat \pi}(v) = -1, \widehat \tau^{\widehat \pi} > t)) + 2\mathbb P(\widehat \tau^{\widehat \pi} \le t)\Big) \\
    & \le  \sum_{v\in \Lambda_n^p} \Big(\mathbb P(\widehat X_{t}^{\plus}(v) = +1) - \widehat \pi(\sigma_v =+1)  + 2\mathbb P(\widehat \tau^{\widehat \pi} \le t) \Big),
\end{align*}
where in the second inequality we again used Claim~\ref{clm:monotonicity-relations} to switch from $X_{t}^{\plus}$ to $\widehat X_t^{\plus}$ and from $X_{t}^{\widehat\pi}$ to the stationary chain $\widehat X_{t}^{\widehat \pi}$. 
Combining the above, and using Lemma~\ref{lem:zero-magnetization-hitting-time} again,
we find that for all $t\le e^{n^{d-1}/K}$,
\begin{align*}
    \mathbb P\big(\widehat X_{t}^{\plus} \ne \widehat X_{t}^{\widehat \pi}\big) \le \sum_{v\in \Lambda_n^p}  \widehat P_t f_v(\plus) + Cn^d e^{ - n^{d-1}/C}\,,
\end{align*}
where we recall the definition of $\widehat P_t f_v$ from~\eqref{eq:P-t-f-v}. 
By 
Proposition~\ref{prop:single-site-relaxation}, the first term above is at most $Cn^d e^{ - g_n(t)/C}$ for all $t\le e^{ n^{d-1}/K}$; the second term can be absorbed into this term as $g_n(t)\le n\le n^{d-1}$. The constraint on $t$ can be dropped by the fact that total variation distance of a Markov chain to its stationary measure is non-increasing, and the fact that $g_n(t) = g_n(e^{n^{d-1}/K})$ for all $t\ge e^{n^{d-1}/K}$.
\end{proof}

\noindent We now conclude the mixing time upper bound on $\Lambda_n^p$ initialized from $\nu^{\pmb{\pm}}$, given WSM within a phase. 

\begin{proof}[\textbf{\emph{Proof of Theorem~\ref{thm:Ising-torus-restatement}}}]
Consider the total variation distance 
\begin{align*}
    \|\mathbb P(X_{t}^{\nu^{{\pmb{\pm}}}} \in \cdot) - \pi\|_\tv & \le \big\|\frac 12 \mathbb P(X_{t}^{\plus}\in \cdot ) + \frac 12 \mathbb P(X_{t}^{\minus}\in \cdot) -\frac 12 \widehat \pi - \frac 12 \widecheck\pi \big\|_\tv  + \big\|\pi - \frac 12\widehat \pi  - \frac 12 \widecheck \pi\big\|_\tv  \\ 
    & \le \frac 12 \Big[ \|\mathbb P(X_{t}^{\plus}\in \cdot )- \widehat \pi\|_\tv + \|\mathbb P(X_{t}^{\minus}\in \cdot ) - \widecheck \pi\|_\tv \Big] + \pi(\partial \widehat \Omega) + \pi(\partial \widecheck \Omega)\,.
\end{align*}
The second inequality here used the triangle inequality for the first term, and the spin-flip symmetry of the Ising model, together with the definitions of $\widehat \Omega$ and $\widecheck \Omega$, for the second. The last two terms are bounded by $Ce^{ - n^{d-1}/C}$ by~\eqref{eq:surface-order-LDP}. 
The first two terms are symmetric, so we only consider one of them. By the triangle inequality, we have
\begin{align}\label{eqn:ajs4}
    \|\mathbb P(X_{t}^{\plus}\in \cdot) - \widehat \pi\|_\tv \le \|\mathbb P(X_{t}^{\plus}\in \cdot) - \mathbb P(\widehat X_{t}^{\plus}\in \cdot)\|_\tv +  \|\mathbb P(\widehat X_{t}^{\plus}\in \cdot) - \widehat \pi\|_\tv\,. 
\end{align}
By Claim~\ref{clm:monotonicity-relations} and the grand coupling, the first term in~\eqref{eqn:ajs4} is at most $\mathbb P(\widehat\tau^{\plus}\le t) \le Ce^{ - n^{d-1}/C}$ while $t\le e^{n^{d-1}/K}$. The second term is bounded by $Cn^d e^{ - g_n(t)/C}$ for $t\le e^{n^{d-1}/K}$ by Theorem~\ref{thm:Ising-mixing-restricted-chain}. All the other terms can be absorbed into this term since $g_n(t)\le n$. The constraint on $t$ can be dropped exactly as in the proof of Theorem~\ref{thm:Ising-mixing-restricted-chain}. 
\end{proof}

\subsection{Bounds obtained using known mixing time bounds with plus boundary condition}\label{subsec:Ising-main-proof}
In this subection, we use known bounds on the mixing time of Glauber dynamics on a box with $\plusone$ boundary condition of side-length $m$ to deduce the bounds of Theorem~\ref{thm:Ising-main} on the mixing time with random initialization on $(\mathbb Z/n\mathbb Z)^d$. In principle, we would like to leverage the fact that the total variation distance appearing in~\eqref{eq:assumption-scale-m} is from a $\plusone$ initialization to set up a recursion. However, unlike the distance from a worst-case initialization, the distance from the $\plusone$ initialization is not necessarily  sub-multiplicative in time; thus, we have to work with worst-case initializations to get the desired exponential decay at scale $m$.  

\begin{proof}[\textbf{\emph{Proof of Theorem~\ref{thm:Ising-main} assuming Theorem~\ref{thm:Ising-wsm-within-phase-intro}}}]
We begin with the weaker bound that holds for all $d\ge 3$. By a standard canonical paths argument (see~\cite{JS}), the mixing time of the Ising Glauber dynamics on $\Lambda_m^{\plus}$ is $\exp(O(m^{d-1}))$ at all $\beta>0$. By sub-multiplicativity of worst-case total variation distance, we get
\begin{align*}
    \max_{x_0}\|\mathbb P(X_{\Lambda_m^{\plus},t}^{x_0}\in \cdot) - \pi_{\Lambda_m^{\plus}}\|_\tv \le \exp\Big(- \frac{t}{Ce^{ Cm^{d-1}}}\Big).
\end{align*}
In particular~\eqref{eq:assumption-scale-m} is satisfied by the choice $f(m) = Ce^{C m^{d-1}}$. In turn, $g_n(t)\ge C^{-1}(\log t)^{1/(d-1)}\wedge n$ for some other constant $C$. Thus Theorem~\ref{thm:Ising-torus-restatement} implies that
\begin{align*}
    \|\mathbb P(X_{t}^{\nu^{\pmb{\pm}}}\in \cdot)  -\pi\|_\tv \le Cn^{d} \exp\big( - C^{-1} (\log t)^{1/(d-1)} \wedge n\big).
\end{align*}
This total variation distance will be $o(1)$ as soon as $t \ge \exp( C' (\log n)^{d-1})$ for a large enough $C'$.

In dimension $d=2$, it was shown in~\cite{LMST} that the mixing time of the Glauber dynamics on $\Lambda_m^{\plus}$ is at most $Cm^{C\log m}$ for all $\beta>\beta_c$. By sub-multiplicativity of worst-case total variation distance, 
\begin{align*}
    \max_{x_0}\|\mathbb P(X_{\Lambda_m^{\plus},t}^{x_0}\in \cdot) - \pi_{\Lambda_m^{\plus}}\|_\tv \le \exp\Big(- \frac{t}{Cm^{C\log m}}\Big).
\end{align*}
In particular~\eqref{eq:assumption-scale-m} is satisfied by choosing $f(m) = Cm^{C\log m}$, so that $g_n(t) \ge e^{(\log t)^{1/2}/C} \wedge n$. Thus Theorem~\ref{thm:Ising-torus-restatement} implies that 
\begin{align*}
    \|\mathbb P(X_{t}^{\nu^{\pmb{\pm}}} \in \cdot) - \pi\|_\tv \le Cn^{d} \exp\Big(- e^{(\log t)^{1/2}/C} \wedge n\Big),
\end{align*}
which will be $o(1)$ as soon as $t = (\log n)^{C' \log \log n}$ for a sufficiently large $C'$. Clearly this timescale is $N^{o(1)}$ and we recall the extra factor of $N$ comes from the switch from continuous to discrete time. 
\end{proof}

\begin{remark}
It should be clear that any improvement in the mixing time
bounds with $\plusone$ boundary condition will lead to an improvement in the
bound of Theorem~\ref{thm:Ising-main}.  In particular, as mentioned in
the introduction, it is conjectured that the mixing time with 
$\plusone$ boundary condition is in fact ${\rm poly}(m)$, which would 
imply a bound on the mixing time from the random $\nu^{\pmb{\pm}}$ initialization
of $O(N{\rm polylog}(N))$.  Indeed, it is already known, by
generalizing the proof approach of~\cite[Theorem 3.1]{Martinelli-SP} to $d\ge 3$ using bounds on interface fluctuations from~\cite{Dobrushin72a,GL-Ising-max},
that the relatively
trivial bound of $\exp(O(m^{d-1}))$ used above can be improved
to $\exp(O(m^{d-2}\log m))$, at least at all sufficiently low temperatures. This improves the bound in Theorem~\ref{thm:Ising-main} for $d\ge 3$ to $\exp( C' (\log N)^{d-2} \log \log N)$ at sufficiently low temperatures.
\end{remark}

We conclude the section by explaining how the above arguments also prove Corollary~\ref{cor:infinite-volume-relaxation}. 
\begin{proof}[\textbf{\emph{Proof of Corollary~\ref{cor:infinite-volume-relaxation} assuming Theorem~\ref{thm:Ising-wsm-within-phase-intro}}}]
Suppose that the local function $F$ depends only on the spins in the ball of radius $r$ centered at the origin, denoted by $B_r$. Then for any $F:\{\pm 1\}^{B_{r}}\to \mathbb R$, the left-hand side of~\eqref{eqn:infvolcor} reduces to
\begin{align*}
    |\mathbb E[F(X_{\mathbb Z^d,t}^{\plus})] - \pi_{\mathbb Z^d}^{\plus}[F(\sigma)]|& \le {\|F\|}_\infty \|\mathbb P(X_{\mathbb Z^d,t}^{\plus}(B_{r})\in \cdot) - \pi_{\mathbb Z^d}^{\plus}(\sigma_{B_{r}}\in \cdot) \|_\tv \\
    & \le {\|F\|}_\infty \sum_{u\in B_{r}} \big(\mathbb P(X_{\mathbb Z^d,t}^{\plus}(u)= +1) - \pi_{\mathbb Z^d}^{\plus}(\sigma_u = +1)\big) \\
    & = C_F \big(\mathbb P(X_{\mathbb Z^d,t}^{\plus}(o)= +1) - \pi_{\mathbb Z^d}^{\plus}(\sigma_o = +1)\big)\,,
\end{align*}
where $o$ is the origin, and in the last equality we used vertex transitivity of the dynamics and the infinite-volume measure, and let $C_F = {\|F\|}_\infty r^d$. 

If we now let $P_t f_o(\plus) = \mathbb P(X_{\mathbb Z^d,t}^{\plus}(o)= +1) - \pi_{\mathbb Z^d}^{\plus}(\sigma_o = +1)$ and follow the proof of Proposition~\ref{prop:single-site-relaxation}, replacing $\widehat P_t f_v(\plus)$ by $P_t f_o(\plus)$, $\widehat X_t^{\plus}$ by $X_{\mathbb Z^d,t}^{\plus}$, and $\widehat \pi$ by $\pi_{\mathbb Z^d}^{\plus}$ (in fact the proof simplifies because we are setting $n$ to infinity, and we do not have to deal with the stopping time $\widehat \tau$), we obtain 
\[
P_t f_o(\plus) \le Ce^{ - g_\infty (t)}\,, \qquad \mbox{where} \qquad g_\infty(t) = \max\{m: mf(m)\le t\}\,,
\]
for a sequence $f(m)$ satisfying~\eqref{eq:assumption-scale-m}. 
At this point, following the proof of Theorem~\ref{thm:Ising-main}, in $d=2$ we use the bound of~\cite{LMST} on the worst-case mixing time of the Glauber dynamics on $\Lambda_m^{\plus}$ to obtain $g_\infty(t) \ge e^{(\log t)^{1/2}/C}$, and in $d\ge 3$ we apply the crude $\exp(O(m^{d-1}))$ bound to obtain $g_\infty(t) \ge C^{-1}(\log t)^{1/(d-1)}$.
\end{proof}

\section{WSM within a phase for the low-temperature Ising model}\label{sec:Ising-wsm-within-phase}
In this section we prove Theorem~\ref{thm:Ising-wsm-within-phase-intro}, showing that WSM within a phase holds for the Ising model at all low temperatures in all dimensions. Recall that this is the only missing ingredient in the proof of the upper bound of our main result, Theorem~\ref{thm:Ising-main}.
We will actually prove the following stronger notion of WSM within a phase, where the marginals of the two measures are compared on balls, rather than single sites. 

\begin{definition}\label{def:wsm-within-a-phase}
Fix $d\ge 2$ and $\beta>\beta_c(d)$. We say the Ising model on $\Lambda_n$ has \emph{weak spatial mixing (WSM) within a phase} if for every $r<n$, for every $v\in \Lambda_n$, 
\begin{align*}
\|\pi_{B_r^{\plus}(v)}(\sigma_{B_{r/2}(v)}\in \cdot) - \widehat \pi_{\Lambda_n^p}(\sigma_{B_{r/2}(v)}\in \cdot ) \|_\tv \le Ce^{ - r/C}\,.
\end{align*}
\end{definition}

To prove Theorem~\ref{thm:Ising-wsm-within-phase-intro}, we will construct an explicit coupling between $\sigma\sim \pi_{B_r^{\plus}(v)}$ and $\sigma'\sim \widehat \pi$ such that $\sigma$ and $\sigma'$ agree on $B_{r/2}(v)$ except with probability $Ce^{ - r/C}$. An analogous coupling for the corresponding random-cluster representations $\omega,\omega'$ is available due to recent results in~\cite{DCGR20}. We construct a ``good" set~$\cE$ of pairs of random-cluster configurations such that, if $(\omega,\omega')\in \cE$, we can lift the coupling of $\omega,\omega'$ to a coupling of the Ising configurations $(\sigma,\sigma')$ on $B_{r/2}(v)$ via the \emph{Edwards--Sokal coupling}~\cite{ES}. We then use a powerful coarse-graining technique of~\cite{Pisztora96} to construct a coupling under which $(\omega,\omega')\in \cE$ except with probability~$Ce^{ - r/C}$.   

Sections~\ref{subsec:random-cluster-rep} and~\ref{subsec:coarse-graining} describe the necessary background on the random-cluster representation and coarse-graining techniques, respectively. In Section~\ref{subsec:coupling-rc-configurations}, we use coarse-graining to couple the random-cluster configurations $\omega,\omega'$ and define a high-probability event~$\cE$ for this coupling. In Section~\ref{subsec:edwards-sokal-pi-hat}, we give a way to generate samples from the conditional measure $\widehat \pi$ using the random-cluster representation, and in Section~\ref{subsec:proof-of-wsm} we combine these ingredients to couple the Ising configurations corresponding to $\omega,\omega'$ on $B_{r/2}(v)$ on the event~$\cE$. This results in the proof of Theorem~\ref{thm:Ising-wsm-within-phase-intro}.

\subsection{The random-cluster representation}\label{subsec:random-cluster-rep}
We first formally define the random-cluster representation of the Ising model, and the Edwards--Sokal coupling of the random-cluster model to the Ising model. For more details, and a discussion of the random-cluster model for non-integer~$q$, we refer the reader to~\cite{Grimmett}. 

For $p\in [0,1]$ and $q>0$, the random-cluster model at parameters $(p,q)$ on a (finite) graph $G = (V(G), E(G))$ is the distribution over edge-subsets $\omega \subset E(G)$, naturally identified with  $\omega\in \{0,1\}^{E(G)}$ via $\omega(e) := 1$ if and only if $e\in \omega$, given by 
\begin{align}\label{eq:rcmeasure}
\pi^{\rc}_{G}(\omega) = \frac{1}{\mathcal Z^\rc_{G,p,q}} p^{|\omega|} (1-p)^{|E(G)| - |\omega|} q^{|\mathsf{Comp}(\omega)|}\,,
\end{align}
where $|\mathsf{Comp}(\omega)|$ is the number of connected components (clusters) in the subgraph $(V(G),\omega)$. 

When $\omega(e) = 1$, we say that $e$ is \emph{wired} or \emph{open}, and when $\omega(e) = 0$, we say it is \emph{free} or \emph{closed}. If $x,y$ are in the same connected component of the subgraph $(V(G),\omega)$, we write $x\xleftrightarrow[]{\omega} y$. 

\subsubsection{Random-cluster boundary conditions}
When $G$ is a subset of $\mathbb Z^d$, e.g., $G = \Lambda_m$, it will be important for us to introduce the notion of \emph{boundary conditions} for the random-cluster model. 

\begin{definition}
A random-cluster \emph{boundary condition} $\xi$ on a subset $G \subset \mathbb Z^d$ is a partition of the (inner) boundary $\partial G = \{v\in G: d(v,\mathbb Z^d \setminus G) =1\}$ such that the vertices in each part of the partition are identified with one another. The random-cluster measure with boundary condition~$\xi$, denoted $\pi^{\xi}_{G,p,q}$, is the same as in~\eqref{eq:rcmeasure} except that $\mathsf{Comp}(\omega)$ is replaced by $\mathsf{Comp}(\omega;\xi)$, counted with this vertex identification.
     Alternatively, $\xi$ can be seen as introducing ghost ``wirings" of  vertices in the same part of the partition.
\end{definition} 

     The \emph{free} boundary condition, $\xi = \zero$, is the one whose partition of $\partial G$ consists only of singletons.
     The \emph{wired} boundary condition on $\partial G$, denoted $\xi = \one$, is that whose partition has all vertices of $\partial G$ in the same part. There is a natural stochastic order on boundary conditions, given by $\xi \le \xi'$ if $\xi$ is a refinement of~$\xi'$. The wired/free boundary condition is then maximal/minimal under this order.
 
     Finally, a class of boundary conditions that will recur are those induced by a configuration on $\mathbb Z^d \setminus G$: given a random-cluster configuration $\eta$ on $E(\mathbb Z^d) \setminus E(G)$, the boundary condition it induces on $G$ is that given by the partition where $v,w\in \partial G$ are in the same element if and only if $v\xleftrightarrow[]{\eta} w$. 

\subsubsection*{Edwards--Sokal coupling}
For $q=2$, there is a canonical way to couple the random-cluster model to the Ising model so that the random-cluster model encodes the correlation structure of the Ising model. (An analogous coupling holds for all integer~$q$, relating the random-cluster model to the Potts model.)  The ($q=2$) \emph{Edwards--Sokal} coupling $\pi^\es_G$ is the probability distribution over spin-edge pairs $(\omega,\sigma)\in \{0,1\}^{E(G)} \times \{\pm 1\}^{V(G)}$ given by:
\begin{enumerate}
    \item sampling a random-cluster configuration $\omega\sim \pi^\rc_G$ at parameters $p=1-e^{ - \beta}$ and $q=2$;
    \item independently assigning (coloring) each connected component $\cC$ of $\omega$ a random variable $\eta_\cC$ which is $\pm1$ with probability $\frac 12$-$\frac 12$, and setting $\sigma_v = \eta_\cC$ for every $v\in \cC$,. 
\end{enumerate} 
The marginal of this coupling on $\sigma$ then gives a sample from $\pi_G$ at inverse temperature~$\beta$. 

We will work with this coupling extensively, as it will give us a mechanism for boosting couplings of random-cluster configurations to couplings of Ising configurations. 
The coupling can also be used in the presence of Ising boundary conditions that are $\plusone$ (symmetrically, all $\minusone$) as follows. In step (1) above, draw $\omega\sim \pi_{G^\one}^\rc$, and in step (2) above, if $\cC({\partial G})$ is the connected component of the boundary, then deterministically set $\eta_{\cC({\partial G})} = +1$ (symmetrically, $-1$), while coloring all other clusters independently as before. This will yield an Ising configuration drawn from $\pi_{(G\cup \partial G)^{\plus}}$ (symmetrically, $\pi_{(G\cup \partial G)^{\minus}}$).

\subsubsection*{Random-cluster phase transition on $\mathbb Z^d$}
The $q=2$ random-cluster model has a phase transition matching that of the Ising model on subsets of $\mathbb Z^d$. Namely, there exists $p_c(d) = 1-e^{ - \beta_c(d)}$ such that, when $p<p_c(d)$, the probability under $\pi^\rc_{\mathbb T_m}$ that $v\stackrel{\omega}\leftrightarrow w$ decays exponentially in $d(v,w)$, while when $p>p_c(d)$, that probability stays uniformly (in $m$ and $v,w\in \mathbb T_m$) bounded away from zero.  

Pisztora~\cite{Pisztora96}, towards proving~\eqref{eq:surface-order-LDP}, obtained more refined information on the random-cluster model at $p>p_c$, showing that configurations typically have one macroscopically sized giant component, and all other components have exponential tails on their size. Recently, using inputs from the \emph{random-current representation} of the Ising model, the following weak spatial mixing property was established for the $q=2$ (Ising) random-cluster model in general dimension in~\cite{DCGR20}. 

\begin{theorem}[{\cite{DCGR20}}]\label{thm:rc-wsm}
Let $d \ge 2$, $q=2$ and $p> p_c(2,d)$. Then 
\begin{align*}
    \|\pi_{\Lambda_n^\one}^\rc(\omega(\Lambda_{n/2})\in \cdot) - \pi_{\Lambda_n^\zero}^\rc(\omega(\Lambda_{n/2}) \in \cdot)\|_\tv \le Ce^{ - n/C}\,.
\end{align*}
\end{theorem}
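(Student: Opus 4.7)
The plan is to construct a monotone coupling of $\omega \sim \pi_{\Lambda_n^\one}^\rc$ and $\omega' \sim \pi_{\Lambda_n^\zero}^\rc$ with $\omega \ge \omega'$ edgewise (which exists for the $q \ge 1$ random-cluster model by FKG together with Strassen's theorem), and then to show that the expected number of discrepancy edges with an endpoint in $\Lambda_{n/2}$ is at most $C e^{-n/C}$. Since the total variation distance of the two $\Lambda_{n/2}$-marginals is dominated by the probability of any edge discrepancy inside $\Lambda_{n/2}$, a union bound gives the claim.

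The key quantitative input is exponential decay of \emph{truncated two-point connectivities} in the infinite-volume $q=2$ random-cluster model at $p > p_c(2,d)$, i.e.\ $\pi^{\rc}_{\mathbb Z^d}(u \leftrightarrow v,\ u \not\leftrightarrow \infty) \le C e^{-d(u,v)/C}$ for all $u,v$. For $q=2$ this is derived via the random-current representation of the Ising model: the Griffiths--Hurst--Sherman switching lemma rewrites truncated correlations in terms of connectivity events for a sum of two independent currents, and these events are then controlled via sharpness of the Ising phase transition, which is available at all $\beta > \beta_c$ by the Aizenman--Barsky--Fern\'andez / Duminil-Copin--Raoufi--Tassion machinery. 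Pisztora-style coarse-graining further upgrades the decay to hold uniformly in finite volume.

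Given truncated decay, I would argue as follows. Under the monotone coupling, a discrepancy at an edge $e \ni u$ with $u \in \Lambda_{n/2}$ means that $e$ is open in $\omega$ but closed in $\omega'$. By a standard inward exploration (revealing edges from $\partial \Lambda_n$ and using the domain Markov property for the random-cluster model), every such event forces in $\omega'$ a path from $\partial \Lambda_n$ to some vertex of $\Lambda_{n/2}$ that does not merge with the bulk giant cluster, i.e.\ a truncated connection of length at least $n/2$. Applying the truncated-decay bound with $d(u, \partial \Lambda_n) \ge n/2$ gives probability $C e^{-n/C}$ per vertex, and the union bound over $u \in \Lambda_{n/2}$ loses only a polynomial factor that is absorbed into the exponential rate.

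The hardest step will be the truncated exponential decay itself: mere uniqueness of the infinite-volume limit at $p > p_c$ gives no quantitative rate, and the wired versus free boundary distinction interacts non-trivially with the giant cluster present at $p > p_c$, so that naive transfer of an infinite-volume estimate to finite volume loses too much. In \cite{DCGR20} this is handled by working directly in the random-current representation at finite volume, applying the switching lemma to pairs of sourced currents inside $\Lambda_n$, and then translating the resulting current-cluster bounds back into random-cluster connectivity estimates.
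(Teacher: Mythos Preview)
Note first that the paper does not prove this theorem; it is quoted from \cite{DCGR20}, and Appendix~\ref{app:deferred-proofs} only reproduces the \cite{DCGR20} block-coupling for Lemma~\ref{lem:E-very-good-probability}, there taking the present theorem itself as a black-box input at the block scale. You have the right two ingredients --- the random-current/switching input giving exponential decay of truncated correlations, and Pisztora-type coarse-graining --- but the way you combine them in your third paragraph has a gap. Under a monotone coupling with $\omega\ge\omega'$ and an inward edge-by-edge exploration, a discrepancy at $e=\{u,v\}\subset\Lambda_{n/2}$ forces $u$ and $v$ to be connected to $\partial\Lambda_n$ through $\omega$-open revealed edges together with the wired identification, i.e.\ a connection in the \emph{dominating} configuration; at $p>p_c$ this is the typical behaviour of the giant cluster, not a rare event. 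Your assertion that one instead obtains, in the \emph{dominated} configuration $\omega'$, an open path from $\partial\Lambda_n$ to $\Lambda_{n/2}$ avoiding the giant cluster is unjustified: discrepancy edges are closed in $\omega'$ and so cannot supply that path, and nothing in the exploration forces the $\omega'$-cluster of $e$ to be finite. The edge-level disagreement-percolation heuristic that works for spin systems does not transfer to the random-cluster model in this form.

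The actual mechanism in \cite{DCGR20}, visible in Definition~\ref{def:very-good-coupling} and the proof of Lemma~\ref{lem:E-very-good-probability}, is a \emph{block}-level exploration. The random-current input is used only at a fixed scale $k$ to show that a given block is $k$-\emph{very-good} (the two configurations agree on it and it is $k$-good per Definition~\ref{def:good-low-temp}) except with probability $Ce^{-k/C}$, combining Lemma~\ref{lem:good-whp-low-temp} with the edge-marginal bound on $B_{2k}$. One then reveals blocks from $\partial\Lambda_n^{(k)}$ inward; the not-very-good blocks are dominated by a highly subcritical $\star$-process on $\Lambda_n^{(k)}$ and fail to reach $\Lambda_{n/2}^{(k)}$ except with probability $Ce^{-n/C}$, leaving a separating surface $\Gamma_k$ of very-good blocks. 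Lemma~\ref{lem:separating-surface-disconnects-information} then guarantees that the boundary conditions induced on $\Int(\Gamma)$ by $\omega$ and by $\omega'$ coincide, so the identity coupling on $\Int(\Gamma)\supset\Lambda_{n/2}$ finishes. In other words, the coarse-graining is not a preprocessing step that makes truncated decay uniform before a separate edge argument, as your outline suggests; it \emph{is} the coupling, converting a one-point estimate at scale $k$ into the geometric screening statement at scale $n$.
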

This result suggests, roughly, that the lack of spatial mixing in the Ising model at low temperatures comes solely from the second step in the Edwards--Sokal coupling, where the random-cluster components are assigned independent spins.  Our Theorem~\ref{thm:Ising-wsm-within-phase-intro} is a formalization of this intuition.

\subsection{Coarse graining of the random-cluster model}\label{subsec:coarse-graining}
In this subsection, we describe the coarse-graining technique of~\cite{Pisztora96}, which will be a key tool in our proof of Theorem~\ref{thm:Ising-wsm-within-phase-intro}. 

Consider a tiling of $\mathbb T_m$ by blocks ($\ell_\infty$ balls of radius~$k$).
 The coarse-graining approach assigns to each block an indicator random variable according to some property of the random-cluster configuration in the block that serves as a signature of the low-temperature regime.  By taking $k$ sufficiently large (depending on $p,d$), the probability that a block satisfies that property can be made arbitrarily close to~$1$. The resulting process can then be compared to a Bernoulli percolation process so that, when $k$ is sufficiently large, the set of blocks {\it not\/} satisfying the signature property form a sub-critical percolation process, with exponential tails.  Thus, moving to blocks serves to {\it boost\/} the spatial prevalence of the signature property.

\subsubsection{$k$-good blocks}
For a fixed $k$, let $\Lambda_m^{(k)} = \Lambda_m \cap k\mathbb Z^d$, and tile $\mathbb T_m = \Lambda_m^p$ by overlapping blocks $(B_x)_{x\in \Lambda_m^{(k)}}$ given by $B_x = B_k(x)$, i.e., the $\ell_\infty$ balls of $\mathbb T_m$ of radius $k$, centered about $x\in \Lambda_{m}^{(k)}$, when the fundamental domain of $\mathbb T_m$ is identified with $\Lambda_m$. (Without loss of generality, we will assume that $m$ is a multiple of~$k$---it will be evident that any remainder issues could otherwise be handled easily.)  The following will be our signature property for these blocks.

\begin{figure}
    \centering
    \includegraphics[width = .4\textwidth]{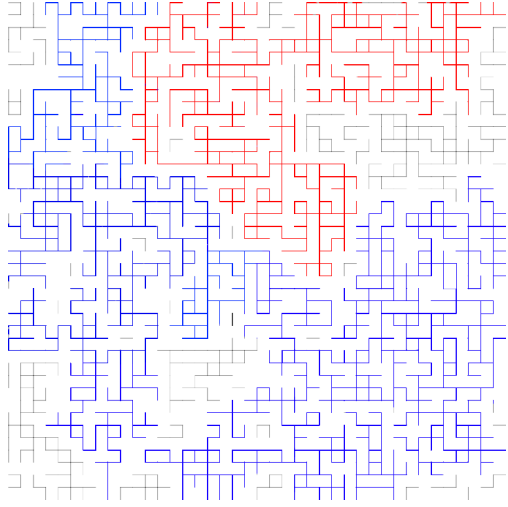}
    \caption{A $k$-bad block of a low-temperature random-cluster configuration: even though its giant component (blue) intersects all boundary sides as desired, the configuration has a second distinct component (red) of size greater than $k$.}
    \label{fig:k-good-block}
\end{figure}

\begin{definition}\label{def:good-low-temp}
When $p> p_c(d)$, a random-cluster configuration $\omega$ on a block $B_x$, is called $k$-\emph{good} if 
\begin{enumerate}
    \item There is at most one open cluster of size (number of vertices) at least $k$ in $\omega$.
    \item There exists an open cluster in $\omega$ intersecting all $2d$ sides of $\partial B_x$. 
\end{enumerate}
 Given $k$ and a random-cluster configuration $\omega$ on $\mathbb T_m$, we say $B_x$ is $k$-\emph{good} in $\omega$ if $\omega(B_x)$ is $k$-good. We call $\omega$ (resp., $B_x$) $k$-\emph{bad} if it is not $k$-good. See Figure~\ref{fig:k-good-block} for a visualization of a $k$-bad block. 
\end{definition}

The definition of \emph{good} blocks in~\cite{Pisztora96} was actually stronger than the above (cf~\cite[Theorem 3.1]{Pisztora96}); we borrow our definition from~\cite{DCGR20}. The results of~\cite{Pisztora96}, together with~\cite{Bodineau05}, directly imply the following. 

\begin{lemma}[{\cite{Pisztora96,Bodineau05}}]\label{lem:good-whp-low-temp}
For all $d\ge 2$ and $p>p_c(d)$, there exists $C>0$ such that, for every $k$, 
\begin{align}\label{eqn:PBlemma}
    \inf_{\xi} \pi^\rc_{B_{2k}^{\xi}(v)} (B_v\mbox{ is $k$-good}) \ge 1-Ce^{ - k/C}\,,
\end{align}
where the infimum is over boundary conditions $\xi$ on $B_{2k}(v)$. 
\end{lemma}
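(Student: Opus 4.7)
The plan is to exploit the buffer annulus $B_{2k}(v)\setminus B_v$ of width $k$ to decouple the event ``$B_v$ is $k$-good'' from the arbitrary boundary condition~$\xi$, and then to invoke classical estimates for the supercritical random-cluster model under a wired boundary condition. The argument proceeds in two stages.

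In the first stage I would reduce to an extremal boundary condition. By Theorem~\ref{thm:rc-wsm} (applied to $B_{2k}(v)$, identified with $\Lambda_{2k}$ up to translation), the free- and wired-BC marginals on $\omega|_{B_v}$ are within $Ce^{-k/C}$ in total variation. By the FKG lattice condition for the $q=2$ random-cluster model, for any boundary condition~$\xi$ the marginal $\pi^\rc_{B_{2k}^\xi(v)}(\omega|_{B_v}\in\cdot)$ is stochastically sandwiched between the free and wired marginals. Under a monotone grand coupling $\omega^\zero \le \omega^\xi \le \omega^\one$ one has $\{\omega^\zero = \omega^\one\}\subseteq\{\omega^\xi = \omega^\one\}$, which upgrades this stochastic sandwich to a total variation bound
\[
\big\|\pi^\rc_{B_{2k}^\xi(v)}(\omega|_{B_v}\in\cdot) - \pi^\rc_{B_{2k}^\one(v)}(\omega|_{B_v}\in\cdot)\big\|_\tv \le Ce^{-k/C}.
\]
Since $k$-goodness is an event on $\omega|_{B_v}$, the lemma reduces to showing $\pi^\rc_{B_{2k}^\one(v)}(B_v \text{ is } k\text{-good}) \ge 1 - Ce^{-k/C}$.

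In the second stage, under the wired boundary condition, I would bound the two defining conditions of $k$-goodness separately. For the existence of an open cluster hitting all $2d$ faces of $\partial B_v$, the complementary event forces a dual surface in $B_{2k}(v)$ separating some face of $\partial B_v$ from the wired boundary, whose probability is at most $Ce^{-k^{d-1}/C}$ by the surface-order large deviation principle for the supercritical random-cluster magnetization (the Pisztora--Bodineau output also underlying~\eqref{eq:surface-order-LDP}). For uniqueness of the size-$\ge k$ cluster, the complementary event forces two disjoint components of $\omega|_{B_v}$ each of size at least~$k$: either both meet $\partial B_v$, in which case they are separated by a dual surface again at surface-order cost; or some large component lies strictly inside $B_v$ and is therefore a globally finite cluster of $\omega|_{B_{2k}(v)}$ of size at least~$k$, bounded by a union bound over the $O(k^d)$ possible vertices in it combined with the exponential decay of finite cluster tails in the supercritical $q=2$ random-cluster model.

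The main obstacle is the last input invoked in the second stage: uniform-in-$p>p_c(d)$ exponential decay of finite cluster sizes. For $q=2$ this is not a soft consequence of supercriticality; it was obtained by Pisztora at sufficiently low temperatures and extended by Bodineau throughout the full supercritical regime via a renormalization argument anchored in the surface-order LDP for the Ising magnetization, itself accessible through the random-current representation. Once this input is available, the FKG-plus-WSM reduction in the first stage channels an arbitrary boundary condition $\xi$ through the wired-BC estimates in the second stage, yielding the claimed uniform bound.
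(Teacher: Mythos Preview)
The paper does not give its own proof of this lemma; it simply cites it from \cite{Pisztora96,Bodineau05} (see the sentence preceding the lemma: ``The results of~\cite{Pisztora96}, together with~\cite{Bodineau05}, directly imply the following''). So there is no in-paper argument to compare against.

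That said, your proposed route has a genuine circularity. Your Stage~1 reduces an arbitrary boundary condition~$\xi$ to the wired one by invoking Theorem~\ref{thm:rc-wsm}. But in the literature---and as the paper itself makes explicit when proving Lemma~\ref{lem:E-very-good-probability} in Appendix~\ref{app:deferred-proofs}---the proof of Theorem~\ref{thm:rc-wsm} in \cite{DCGR20} is obtained via the very-good coarse-graining, which uses Lemma~\ref{lem:good-whp-low-temp} as an input (it is precisely the bound fed into Corollary~\ref{cor:stoch-domination} and into the ``$B_{v_{j_i}}$ is $k$-good'' step of the coupling). So using WSM to prove this lemma inverts the actual logical order. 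The Pisztora--Bodineau argument establishes the $k$-good probability bound \emph{directly} under arbitrary boundary conditions via slab percolation and renormalization, without first passing through WSM; this is why the infimum over $\xi$ is present in the statement.

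Separately, even granting the black-box inputs, your Stage~2 handling of the uniqueness condition is imprecise. In the case where two size-$\ge k$ clusters of $\omega|_{B_v}$ both touch $\partial B_v$, the phrase ``separated by a dual surface at surface-order cost'' does not describe a correct bound: under the wired measure on $B_{2k}(v)$ both such clusters may well be connected through the annulus, so neither is a finite cluster of $\omega|_{B_{2k}(v)}$, and there is no macroscopic dual surface forced inside $B_v$ (a thin tube of size $k$ attached to the boundary has an $O(k)$ edge boundary, and one must still control the entropy of such configurations). The actual Pisztora argument for this ``local uniqueness'' is more delicate and is exactly the content you defer to in your final paragraph; it is not a corollary of the two bullets you list.
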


\noindent Note that the infimum in~\eqref{eqn:PBlemma} is important because the event that $\omega(B_v)$ is $k$-good is {\it not\/} monotone. 

\subsubsection{The coarse-grained percolation process}
The above notions of $k$-goodness can be used to define a coarse-graining of a configuration $\omega$ on $\mathbb T_m$, via the percolation process of $k$-\emph{good} blocks. 

\begin{definition}\label{def:k-good-coarse-graining}
Fix $q\ge 1$, $d\ge 2$ and $p>p_c(d)$. For $k$ fixed, and a configuration $\omega$ on $\Lambda_m$, define a {\it coarse-grained site percolation} $\eta(\omega) = \eta^{(k)}(\omega): \mathbb T_{m}^{(k)} \to \{0,1\}$ by setting the random variable $\eta_x(\omega)$ to be 1 (open) if $\omega(B_x)$ is $k$-good (according to Definition~\ref{def:good-low-temp}), and $0$ if it is $k$-bad. 
\end{definition}

By~\cite[Theorem 1.3]{LSS97}, on a graph $G$ of degree at most $d$, for every $r$ and $\varepsilon>0$, there exists $\delta(r,\varepsilon,d)>0$ such that every site percolation process $\zeta: V(G)\to \{0,1\}$ having 
\begin{align*}
    \min_{v\in V(G)} \mathbb P(\zeta_v = 1\mid \{\zeta_w: d_G(w,v)\ge r\}) \ge 1-\delta
\end{align*}
stochastically dominates the i.i.d.\ $\mbox{Ber}(1-\varepsilon)$ independent percolation on $G$. This implies that, by taking $k$ large enough, we can ensure that $\eta$ stochastically dominates an i.i.d.\ percolation process on $\mathbb T_m^{(k)}$ whose parameter can be taken as close to $1$ as we like. The result is in fact quantitative in relating $r, \varepsilon, d$ and $\delta$, i.e., both $\epsilon$ and $\delta$ can be taken to be exponentially decaying in~$k$.
Thus, we get the following sharper statement.

\begin{corollary}
\label{cor:stoch-domination}
Fix $d\ge 2$ and $p>p_c(d)$. There exists $C$ such that, for every $k$, if $\omega \sim \pi_{\mathbb T_m}^\rc$ and $(\tilde \eta_x)_{x\in \mathbb T_m^{(k)}}$ are i.i.d.\ $\mbox{Ber}(1-Ce^{ - k/C})$, we have the stochastic domination 
\begin{align*}
    \eta(\omega) \succeq \tilde \eta\qquad \mbox{ on }\qquad \mathbb T_m^{(k)}\,.
\end{align*}
By similar reasoning, for any $\xi$, if $\omega \sim \pi_{\Lambda_m^\xi}^{\rc}$, we have
\begin{align*}
    \eta(\omega) \succeq \widetilde \eta \qquad \mbox{ on }\qquad \Lambda_{m-2k}^{(k)}\,.
\end{align*}
\end{corollary}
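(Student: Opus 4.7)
The plan is to verify the hypothesis of the Liggett--Schonmann--Stacey theorem \cite{LSS97} (as quoted in the excerpt) for the coarse-grained process $\eta(\omega)$, with the ambient graph taken as $\mathbb T_m^{(k)}$, and conclude stochastic domination by an i.i.d.\ Bernoulli field of parameter $1-Ce^{-k/C}$. The two essential inputs are the quoted LSS97 comparison theorem and the uniform lower bound on the $k$-goodness probability supplied by Lemma~\ref{lem:good-whp-low-temp}; the remainder of the argument is bookkeeping to combine them via the domain Markov property of the random-cluster measure.

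The first step is a geometric observation: the event $\{\eta_x(\omega)=1\}$ is determined by $\omega(B_x)$, where $B_x = B_k(x)$ has $\ell_\infty$-radius $k$. Fix a constant $r_0$ (depending only on $d$) large enough that any $w\in \mathbb T_m^{(k)}$ with graph distance $d_{\mathbb T_m^{(k)}}(w,x)\ge r_0$ satisfies $B_w \cap B_{2k}(x) = \emptyset$; since the coarse sites are spaced at distance $k$ and $B_{2k}(x)$ has radius $2k$, a constant $r_0$ independent of $k$ suffices. Consequently, $\eta_w$ for all such $w$ is measurable with respect to $\omega$ restricted to the complement of the edge set of $B_{2k}(x)$, which we denote $\omega|_{B_{2k}(x)^c}$.

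Next, by the domain Markov property of the random-cluster measure, conditional on $\omega|_{B_{2k}(x)^c}$ the restriction $\omega|_{B_{2k}(x)}$ is distributed as $\pi^{\rc}_{B_{2k}^\xi(x)}$, where $\xi$ is the boundary condition induced on $\partial B_{2k}(x)$ by the external configuration. Lemma~\ref{lem:good-whp-low-temp} then gives
\begin{align*}
\mathbb P\bigl(\eta_x = 1 \;\big|\; \omega|_{B_{2k}(x)^c}\bigr) \;\ge\; 1 - Ce^{-k/C},
\end{align*}
\emph{uniformly} over external configurations---this is precisely why the infimum over $\xi$ in Lemma~\ref{lem:good-whp-low-temp} is needed. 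Since $\eta_W := (\eta_w)_{d(w,x)\ge r_0}$ is measurable with respect to $\omega|_{B_{2k}(x)^c}$, the tower property yields the same lower bound on $\mathbb P(\eta_x=1\mid \eta_W)$. This verifies the hypothesis of LSS97 with $r=r_0$ and $\delta = Ce^{-k/C}$ on the bounded-degree graph $\mathbb T_m^{(k)}$; the quantitative version cited in the excerpt preserves exponential decay in $k$ (possibly with a larger constant $C$), delivering the first claim.

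For the second assertion, the same argument applies verbatim to $\omega\sim \pi^{\rc}_{\Lambda_m^\xi}$, restricted to $x\in \Lambda_{m-2k}^{(k)}$: this restriction guarantees $B_{2k}(x)\subseteq \Lambda_m$, so the domain Markov step is available, with the induced boundary condition on $\partial B_{2k}(x)$ merging contributions from $\xi$ and from the revealed configuration. The uniformity in $\xi$ in Lemma~\ref{lem:good-whp-low-temp} absorbs this, and LSS97 applies as before. The main (rather mild) obstacle is this careful handling of boundary conditions and confirming that the finite-range-of-dependence on the coarse grid can be captured by a constant $r_0$ independent of $k$; both are routine once the uniform Lemma~\ref{lem:good-whp-low-temp} is in hand.
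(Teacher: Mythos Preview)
Your proposal is correct and follows essentially the same approach as the paper: the paper's own proof is the short paragraph immediately preceding the corollary, which invokes \cite[Theorem~1.3]{LSS97} together with Lemma~\ref{lem:good-whp-low-temp}, and you have simply filled in the details (the domain Markov step, the choice of a $k$-independent $r_0$, and the tower-property reduction to the conditional bound). Your handling of the boundary case---restricting to $\Lambda_{m-2k}^{(k)}$ so that $B_{2k}(x)\subset \Lambda_m$---is also exactly the reasoning the paper intends.
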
 

\subsubsection{Separating surfaces of good blocks}
Not only does the percolation process of $k$-\emph{good} blocks, and in particular sub-criticality of $k$-\emph{bad} blocks, control the typical connectivity structure of the random-cluster model as in its application in~\cite{Pisztora96}, it can sometimes be used to couple configurations inside \emph{separating surfaces} of the coarse-graining; this was done in e.g.,~\cite{DCGR20} for the Ising model, using a priori inputs from the random-current representation of the Ising model.

Let us formalize what we mean by a separating surface. We begin by defining clusters of the coarse-grained percolation process, both on the original coarse-grained graph $\mathbb T_m^{(k)}$ as well as under a stronger form of adjacency, which we call $\star$-adjacency (allowing diagonal adjacencies), corresponding to $\ell_\infty$ distance. 

\begin{definition}\label{def:k-adjacency}
Call two vertices $x,y \in k\mathbb Z^d$ \emph{$k$-adjacent}, denoted $x\sim_{k} y$, if they are a (graph) distance at most $k$ apart.
Call two vertices $x,y\in k\mathbb Z^d$ \emph{$k$-$\star$-adjacent}, denoted $x\sim_{k}^\star y$, if they are at $\ell_\infty$ distance at most $k$. Observe that for $x\ne y$, we have $x\sim_k^\star y$ if and only if $B_k(x) \cap B_k(y)\ne \emptyset$. 
\end{definition}

\begin{definition}
Consider a site percolation $\zeta: \Lambda_m^{(k)} \to \{0,1\}$ for some $\Lambda^{(k)}\subset k\mathbb Z^d$. 
An {\it (open) $k$-cluster of $\zeta$} is a maximal $k$-connected (via $k$-adjacency) component of the vertices $\{v\in \Lambda^{(k)}: \zeta_v = 1\}$. An {\it (open) $k$-$\star$-cluster of $\zeta$} is a  maximal $k$-$\star$-connected (via $k$-$\star$-adjacency) component of $\{v\in \Lambda^{(k)}: \zeta_v = 1\}$. The $k$-cluster containing a vertex~$v$ is denoted $\cC_v(\zeta)$, and the $k$-$\star $-cluster containing~$v$ is denoted $\cC_v^\star(\zeta)$. 

We write $x\xleftrightarrow[]{\zeta} y$ if $y\in \cC_x(\zeta)$, and  $x\xleftrightarrow[]{\zeta}_\star y$ if $y\in \cC_x^\star(\zeta)$. We will often be interested in the clusters of the {\it complement\/} of $\zeta$, i.e., $\mathbf 1-\zeta$, defined pointwise as $(\mathbf 1-\zeta)_x = 1-\zeta_x$ for all $x\in \mathbb T_m^{(k)}$. 
\end{definition}

\begin{definition}\label{def:separating-surface}
Consider a percolation $\zeta: \Lambda_m^{(k)} \to \{0,1\}$ and let $l<m-2k$. We say that $\zeta$ has an {\it open separating $k$-surface\/} in $\Lambda_m^{(k)}\setminus \Lambda_l^{(k)}$ if  the following event occurs:  
$$\Big\{\zeta: \Lambda_{l+k}^{(k)}\xleftrightarrow[]{\one-\zeta}_\star \partial\Lambda_{m-k}^{(k)}\Big\}^c\,,$$
i.e., there is no $(\one - \zeta)$-open $k$-$\star$-cluster intersecting both $\Lambda_{l+k}^{(k)}$ and $(\Lambda_{m-k}^{(k)})^c$. 
Any (minimal) set $\Gamma$ of open sites of $\Lambda_{m}^{(k)}$ in~$\zeta$ that serves as a witness to this event (i.e., the event holds no matter the values of $\zeta$ outside $\Gamma$) is called an \emph{open separating $k$-surface}. 
\end{definition}

\begin{remark}\label{rem:outermost-separating-surface}
Observe that for any fixed set of vertices $A^{(k)}$ such that $\partial \Lambda_m^{(k)}  \subset A^{(k)} \subset \Lambda_m^{(k)}$, if we let $D = \bigcup_{v\in A^{(k)}} \cC^\star_{v}(\zeta)$ denote its open $\star$-component(s) in $\zeta$, then if $D \cap \Lambda_{l+k}^{(k)} = \emptyset$, the outer $\star$-boundary $$\partial_{\textrm{out}} D = \{u \in \Lambda_m^{(k)} \setminus D : u\sim^{k}_\star D\}$$ of $D$ in $\Lambda_m^{(k)}$ yields an open separating $k$-surface of $\zeta$ in $\Lambda_m^{(k)}\setminus \Lambda_l^{(k)}$.  
\end{remark}

Since $\Gamma_k \subset \Lambda_m^{(k)}$, it necessarily splits $k\mathbb Z^d\setminus \Gamma_k$ into exactly one infinite $k$-$\star$-connected component and some finite ones: call the infinite one $\Ext(\Gamma_k)$, and call the union of the finite ones $\Int(\Gamma_k)$, or in other words $\Int(\Gamma_k) = \Lambda_m^{(k)} \setminus (\Gamma_k \cup \Ext(\Gamma_k))$. One can observe (as established in~\cite{DeuschelPisztora96}) that an open separating $k$-surface $\Gamma_k$ is such that any $k$-$\star$-connected path from $\Int(\Gamma_k)$ to $\Ext(\Gamma_k)$ must intersect $\Gamma_k$. In particular, if $\Gamma_k \subset \Lambda_m^{(k)}\setminus \Lambda_l^{(k)}$ is an open separating $k$-surface, then $\Lambda_l^{(k)} \subset \Int(\Gamma_k)$ and any $k$-$\star$-connected path from $\Lambda_l^{(k)}$ to $\partial \Lambda_m^{(k)}$ must hit~$\Gamma_k$.

%One can observe (as established in~\cite{DeuschelPisztora96}) that if $\Gamma_k \subset \Lambda_m^{(k)} \setminus \Lambda_l^{(k)}$ is an open separating $k$-surface, it is a connected set of open sites of~$\zeta$ such that any $k$-$\star$-connected path from $\Lambda_l^{(k)}$ to $\Lambda_m^{(k)}$ intersects $\Gamma_k$. Thus, $\Gamma_k$  necessarily splits $k\mathbb Z^d \setminus \Gamma_k$ into exactly one infinite $\star$-connected component and some finite ones: call the infinite one $\mathsf{Ext}(\Gamma_k)$ and let $\mathsf{Int}(\Gamma_k) = \Lambda_m^{(k)}\setminus (\Gamma_k \cup \mathsf{Ext}(\Gamma_k))$. 

We make the following key observation, whose proof is deferred to Appendix~\ref{app:deferred-proofs}, that will be central to the applications of the low-temperature coarse-graining, and will be used later.

\begin{observation}\label{obs:connected-component-good-blocks}
	Let $\eta^k(\omega)$ be as in Definition~\ref{def:good-low-temp}. Suppose $A_k$ is a $k$-connected open set of $\Lambda_n^{(k)}$ in $\eta^{k}(\omega)$, and let $A = \bigcup_{x\in A_k} B_x$. Then $\omega(A)$ has exactly one component of size greater than or equal to $k$. 
\end{observation}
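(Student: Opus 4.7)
The plan is to establish existence and uniqueness of a cluster of size $\geq k$ in $\omega(A)$ separately. Existence is immediate: for any $x \in A_k$, condition (2) of Definition~\ref{def:good-low-temp} supplies a crossing cluster $C_x \subseteq \omega(B_x)$ containing a path between opposite faces of $B_x$, so $|C_x| \geq 2k+1 > k$; hence $C_x$ lies in some $\omega(A)$-cluster of size at least $2k+1$.

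The core ingredient is a pairwise merger lemma for $k$-adjacent blocks. Fix $x,y \in A_k$ with $y = x+ke_1$ (the other cases of $k$-adjacency being analogous), and let $S = B_x \cap B_y$ denote the overlap slab. I claim that $C_x \cap S$ contains an $\omega(S)$-connected subset of size at least $k+1$. Indeed, condition (2) supplies a path $\gamma = (\gamma_0,\ldots,\gamma_L) \subseteq C_x$ with $(\gamma_0)_1 = x_1-k$ and $(\gamma_L)_1 = x_1+k$; letting $j_0$ be the largest index $j$ with $(\gamma_j)_1 = x_1-1$, the tail sub-path $\gamma_{j_0+1},\ldots,\gamma_L$ lies entirely in $S$ (since its $z_1$-coordinate never returns to $x_1-1$ after $j_0$), attains every $z_1$-value in $[x_1, x_1+k]$, and so has at least $k+1$ vertices. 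Its consecutive edges are open in $\omega$ and lie in $E(S)$, so the sub-path is $\omega(S)$-connected. Call this subset $P_x$. Since $\omega(S) \subseteq \omega(B_y)$, the set $P_x$ lies in a single $\omega(B_y)$-cluster of size $\geq k+1 > k$, which by condition (1) applied to $B_y$ must be $C_y$. Therefore $P_x \subseteq C_x \cap C_y$, so $C_x$ and $C_y$ share a vertex and lie in a common $\omega(A)$-cluster. Iterating this merger along any $k$-connected path in $A_k$ shows that all crossing clusters $\{C_x : x \in A_k\}$ coalesce into a single $\omega(A)$-cluster $\mathcal{C}$ of size at least $2k+1$.

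For uniqueness, suppose $K \neq \mathcal{C}$ is another $\omega(A)$-cluster with $|K| \geq k$. Then $K \cap C_x = \emptyset$ for every $x \in A_k$, so by condition (1) every $\omega(B_x)$-sub-cluster of $K \cap B_x$ has size $<k$; in particular $K$ cannot be contained in any single $B_{x_0}$, for otherwise $K$ would be an $\omega(B_{x_0})$-cluster of size $\geq k$, forcing $K = C_{x_0}$. Tracking a path in $K$ between two vertices lying in distinct blocks and using $k$-connectedness of $A_k$, one identifies a pair of $k$-adjacent $x,y \in A_k$ together with a sub-path of $K$ from $B_x \setminus B_y$ to $B_y \setminus B_x$. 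This sub-path traverses $S = B_x \cap B_y$ over every $z_1 \in [x_1, x_1+k]$, yielding an $\omega(S)$-connected subset of $K$ of size $\geq k+1$; by the same reasoning as in the pairwise lemma, this subset must lie in $C_y$, contradicting $K \cap C_y = \emptyset$.

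The main technical delicacy lies in isolating the $\omega(S)$-connected sub-paths of the crossing clusters: since a crossing path can dip in and out of $S$ by wandering through $B_x \setminus S$, one must consider only its final segment after entering $S$ for the last time to guarantee $\omega(S)$-connectivity rather than merely $\omega(B_x)$-connectivity. Everything else---existence, the merging of adjacent crossing clusters, and the exclusion of alternative large clusters---reduces cleanly to this slicing observation combined with the uniqueness condition (1) of $k$-goodness.
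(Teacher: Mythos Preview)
Your merger argument for $k$-adjacent blocks is correct and in fact more carefully justified than the paper's brief proof: you explicitly extract an $\omega(S)$-connected tail of the crossing path inside the overlap slab $S=B_x\cap B_y$ of size $\ge k+1$, whereas the paper only asserts that $|C_x\cap B_y|\ge k$ and then concludes without checking that this intersection lies in a single $\omega(B_y)$-cluster.

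Your uniqueness argument, however, has a genuine gap. The assertion that a path in $K$ must yield, for some $k$-adjacent pair $x,y\in A_k$, a sub-path from $B_x\setminus B_y$ to $B_y\setminus B_x$ is not justified: the path in $K$ need not track the $k$-adjacency structure of $A_k$. Concretely, take $d=2$, $k=5$, $A_k=\{(0,0),(5,0),(0,5)\}$ (which is $k$-connected via $(5,0)\sim_k(0,0)\sim_k(0,5)$), and let $K$ be the open path $(6,4),(6,5),(5,5),(5,6),(4,6)$. Then $|K|=k$ and $K$ lies in no single block, yet the only $k$-adjacent pairs in $A_k$ both involve $(0,0)$, and $K$ never enters $B_{(0,0)}\setminus B_{(5,0)}\subset\{z_1<0\}$ nor $B_{(0,0)}\setminus B_{(0,5)}\subset\{z_2<0\}$; so no crossing sub-path of the required type exists. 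Worse, one can complete $\omega$ so that each block carries a crossing cluster avoiding $K$, making all three blocks $k$-good while $K$ persists as a second $\omega(A)$-component of size exactly $k$: the Observation at the stated threshold $\ge k$ appears to be false. The paper's own proof is silent on uniqueness as well, and its sole application (in the proof of Lemma~\ref{lem:separating-surface-disconnects-information}) only invokes it for components of size at least $2k$; establishing uniqueness at any threshold would require a mechanism different from following $k$-adjacencies in $A_k$.
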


Using this observation, one obtains the following Markov-like property for the low-temperature coarse graining.

\begin{figure}
	\centering
	\includegraphics[width = .4\textwidth]{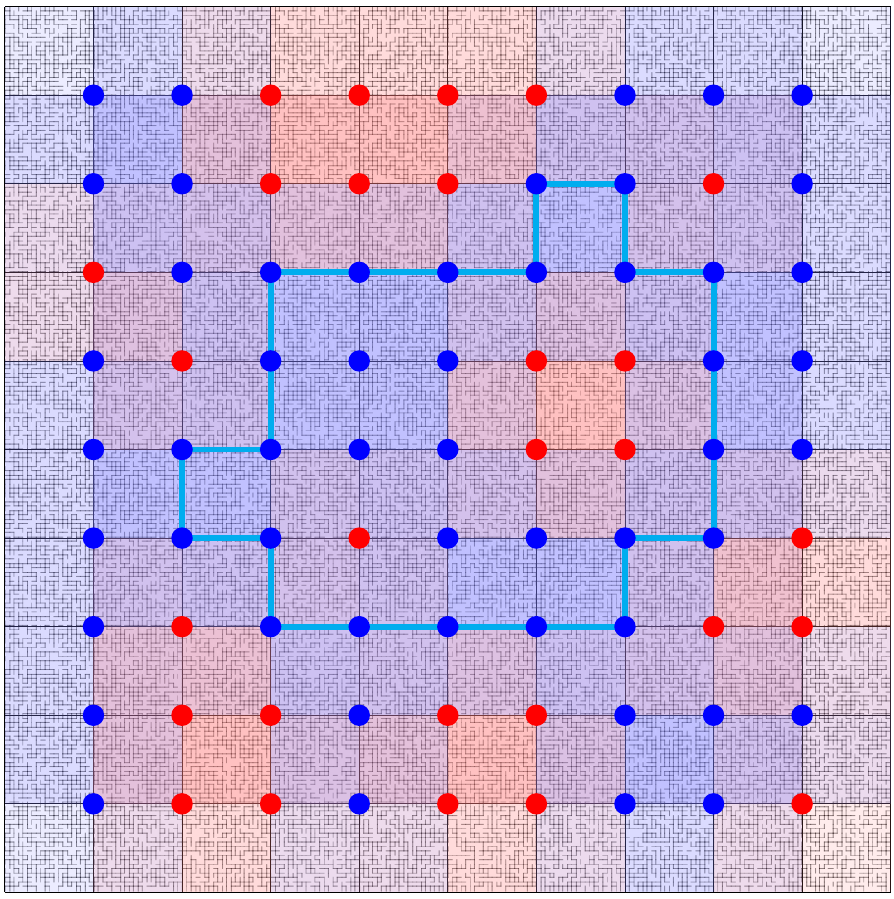}
	\qquad \qquad
	\includegraphics[width = .402\textwidth]{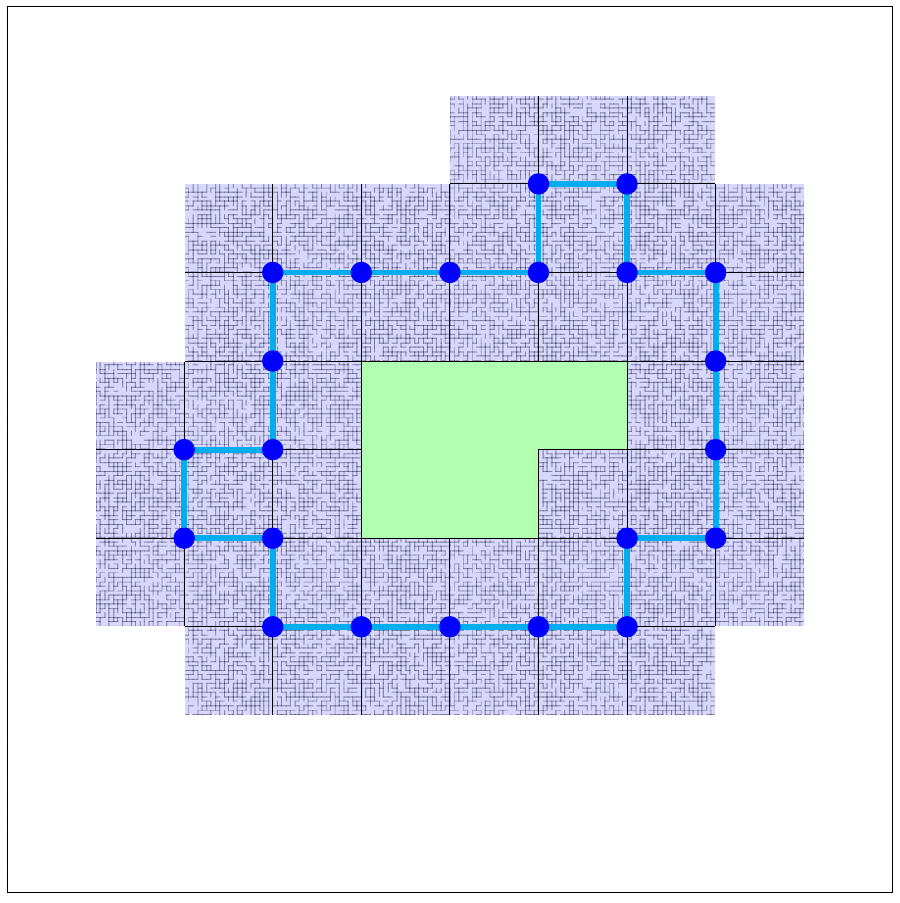}
	\caption{Left: a random-cluster configuration at $p>p_c(q,d)$ with its coarse graining; the coarse graining features a separating surface $\Gamma_k$ (cyan). Right: The separating surface $\Gamma_k$, with its corresponding blocks $\Gamma$ is shown in blue; the distribution in $\Int(\Gamma)$ (shaded green) is conditionally independent of the distribution in $\Ext(\Gamma)\setminus \Gamma$ (white).}
	\label{fig:separating-surface}
\end{figure}

\begin{lemma}\label{lem:separating-surface-disconnects-information}
	Let $\eta(\omega)$ be as in Definition~\ref{def:good-low-temp}. Consider a random-cluster configuration $\omega$ in $\mathbb Z^d$ and let $\Gamma_k(\omega)$ be any open separating $k$-surface of $\eta(\omega)$. If 
	$$\Gamma  = \bigcup_{x\in \Gamma_k} B_x \qquad \mbox{and}\qquad \mathsf{Ext}(\Gamma) = \bigcup_{x\in \mathsf{Ext}(\Gamma)}B_x\,,$$ then the boundary condition induced by $\omega(\Gamma \cup \mathsf{Ext}(\Gamma))$ on $\Lambda_m \setminus (\Gamma \cup \mathsf{Ext}(\Gamma))$ depend only on $\omega(\Gamma)$. 
\end{lemma}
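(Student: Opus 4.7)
\medskip
\noindent\textbf{Proof plan.} The plan is to establish that $\Gamma$ acts as an edge-level barrier isolating $\Int(\Gamma) := \Lambda_m \setminus (\Gamma \cup \Ext(\Gamma))$ from $\Ext(\Gamma) \setminus \Gamma$, and then to use Observation~\ref{obs:connected-component-good-blocks} to show that the partition of boundary vertices of $\Int(\Gamma)$ induced by $\omega(\Gamma\cup\Ext(\Gamma))$-connectivity coincides with that induced by $\omega(\Gamma)$-connectivity alone.

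For the edge-level barrier, I would use the fact that $\Gamma_k$ $\star$-separates $\Int(\Gamma_k)$ from $\Ext(\Gamma_k)$: any $x\in\Int(\Gamma_k)$ and $y\in\Ext(\Gamma_k)$ satisfy $\|x-y\|_\infty \ge 2k$. Hence any pair of $\mathbb Z^d$-neighbors $u\in B_x$, $u'\in B_y$ must lie on the shared $(d-1)$-dimensional face of $B_x$ and $B_y$, and that face is contained in some $B_z$ with $z\in\Gamma_k$ sandwiched on a $\star$-path from $x$ to $y$; in particular $u,u'\in\Gamma$. This rules out edges of $\mathbb Z^d$ directly connecting $\Int(\Gamma)$ to $\Ext(\Gamma)\setminus\Gamma$, so the induced boundary condition on $\Int(\Gamma)$ is the partition of its boundary by open paths in $\omega$ restricted to $\Gamma\cup\Ext(\Gamma)$, with every such crossing path necessarily traversing $\Gamma$.

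It then suffices to show that for any two outer-facing $\Gamma$-vertices $w_1,w_2$ (vertices of $\Gamma$ possessing an open edge to $\Ext(\Gamma)\setminus\Gamma$), a $\omega(\Ext(\Gamma)\setminus\Gamma)$-connection between them is already witnessed inside $\omega(\Gamma)$. Taking $\Gamma_k$ to be $k$-connected (e.g., by passing to the outermost separating surface via Remark~\ref{rem:outermost-separating-surface}), Observation~\ref{obs:connected-component-good-blocks} furnishes a unique $\omega(\Gamma)$-cluster $\cC^\star$ of size $\ge k$, with every other cluster of size $<k$. I would argue that any outer-facing $w$ must lie in $\cC^\star$: $w$ sits on the outer face of some good block $B_x$, $x\in\Gamma_k$, and $k$-goodness together with the merging of giant clusters across $k$-adjacent good blocks forces $w\in\cC^\star$, since a cluster of $\omega(B_x)$ of size $<k$ cannot populate the outer face in the way needed to leak out to $\Ext(\Gamma)\setminus\Gamma$. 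Granted this, each excursion through $\Ext(\Gamma)\setminus\Gamma$ enters and exits at vertices of $\cC^\star$ and may be short-circuited by a path in $\cC^\star\subseteq\omega(\Gamma)$, yielding the lemma.

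The main obstacle will be precisely this last step: ruling out the scenario in which a small ($<k$) $\omega(\Gamma)$-cluster reaches the outer face of $\Gamma$ and pairs, via $\omega(\Ext(\Gamma)\setminus\Gamma)$, with another small $\omega(\Gamma)$-cluster to create an identification that $\omega(\Gamma)$ itself does not witness. I expect to handle this by a propagation argument along the thick layer of $\Gamma$ facing $\Ext(\Gamma)$, leveraging both the $k$-connectedness of $\Gamma_k$ and the structural content of $k$-goodness (a unique size-$\ge k$ cluster in each good block, touching all $2d$ sides), so that the outer face of $\Gamma$ is fully absorbed into $\cC^\star$.
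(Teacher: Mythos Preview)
Your overall strategy matches the paper's: establish a geometric barrier between $\Int(\Gamma)$ and $\Ext(\Gamma)$, then use Observation~\ref{obs:connected-component-good-blocks} (together with $k$-connectedness of $\Gamma_k$) to rule out any extra identifications coming from $\omega(\Ext(\Gamma)\setminus\Gamma)$. However, the step you flag as the ``main obstacle'' is a genuine gap in your proposal, and your suggested resolution does not work.

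The claim that \emph{every} outer-facing vertex $w\in\Gamma$ lies in the unique large cluster $\cC^\star$ is false: nothing in the definition of $k$-good prevents a small ($<k$) cluster of $\omega(B_x)$ from touching a side of $B_x$ and having an open edge into $\Ext(\Gamma)\setminus\Gamma$. Your ``propagation along the thick layer'' would not rescue this, because such a small cluster need not merge with anything as you move along $\Gamma$. The paper avoids this trap by not trying to control all outer-facing vertices. Instead, it uses the size consequence of the barrier you already noted: since $d(\Int(\Gamma),\Ext(\Gamma))\ge 2k$, any open path in $\omega$ from a boundary vertex of $\Int(\Gamma)$ that reaches $\Ext(\Gamma)$ must traverse at least $2k$ vertices inside $\Gamma$. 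Hence the $\omega(\Gamma)$-cluster carrying that path has size $\ge 2k>k$ and therefore equals $\cC^\star$. If two boundary vertices of $\Int(\Gamma)$ were connected in $\omega^{\one}$ (all of $\Ext(\Gamma)\setminus\Gamma$ wired) but not in $\omega^{\zero}$, each would lie in such a cluster, giving two distinct $\omega(\Gamma)$-components of size $\ge 2k$ and contradicting Observation~\ref{obs:connected-component-good-blocks}. This is the missing idea: the thickness of $\Gamma$ forces the \emph{relevant} clusters to be large, rather than forcing every outer-facing vertex into $\cC^\star$.

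A smaller point: the lemma is stated for an \emph{arbitrary} open separating $k$-surface, so ``passing to the outermost'' via Remark~\ref{rem:outermost-separating-surface} does not establish $k$-connectedness of the given $\Gamma_k$. The paper instead invokes the $\mathbb Z^d$ connectivity/$\star$-connectivity duality (as in Deuschel--Pisztora) to conclude that any minimal separating surface is $k$-connected.
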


We refer the reader to Figure~\ref{fig:separating-surface} for a visualization of Lemma~\ref{lem:separating-surface-disconnects-information}. An analog of it was established in~\cite[Section 3.2]{DCGR20}. For completeness, we include a proof in Appendix~\ref{app:deferred-proofs}.

\subsection{Coupling random-cluster configurations via coarse-graining}\label{subsec:coupling-rc-configurations}
A key tool in our proof of WSM within a phase for the Ising model up to its critical point is a coupling of random-cluster configurations $\omega(\Lambda_{r/2}),\omega'(\Lambda_{r/2})$ that are drawn from distinct random-cluster measures, e.g., $\pi^{\rc}_{\Lambda_r^\one}$ and $\pi^\rc_{\Lambda_n^p}$. 
Our aim in this subsection is to couple the boundary conditions induced by $\omega(\Lambda_r \setminus \Lambda_{r/2})$ and $\omega'(\Lambda_n \setminus \Lambda_{r/2})$ on $\Lambda_{r/2}$ , as well as, respectively, the component of $\omega$ intersecting $\partial \Lambda_r$ and the largest component of $\omega'$. This stronger coupling will be central to our ability to couple the corresponding Ising configurations from $\pi_{B_r^{\plus}(v)}$ and $\widehat \pi$. 

To this end, we define a different coarse-graining which applies to pairs of random-cluster configurations. This coarse-graining is based on one found in~\cite{DCGR20} and used for proving Theorem~\ref{thm:rc-wsm}. 

\begin{definition}
For random-cluster configurations $(\omega,\omega')$, we say that a block $B_v$ is $k$-\emph{very good} if $\omega(B_v) = \omega'(B_v)$ and that common configuration is $k$-good, per Definition~\ref{def:good-low-temp}. Define the $k$-\emph{very-good coarse graining}, $\zeta^{(k)}(\omega, \omega'): \Lambda_n^k \to \{0,1\}$ as taking the value $1$ if $B_v$ is very good, and $0$ otherwise. 
\end{definition}

\begin{definition}
Let $\cE_{\textsc{vg}}$ be the event that the very good coarse-graining $\zeta^{(k)}(\omega,\omega')$ has a $k$-very good separating surface in the annulus $\Lambda_{m}\setminus \Lambda_{m/2}$. 
\end{definition}

The main result of this subsection is the following, and is very similar to Lemma 3.3 of~\cite{DCGR20}.
\begin{lemma}\label{lem:E-very-good-probability}
Suppose $k$ is sufficiently large. Then there is a monotone coupling $\mathbb P$ such that for any $\xi,\xi'$, if $\omega\sim \pi_{\Lambda_m^{\xi}}$ and $\omega' \sim \pi_{\Lambda_m^{\xi'}}$, 
\begin{align*}
    \mathbb P((\omega,\omega') \notin \cE_{\textsc{vg}}) \le Ce^{ - m/C}\,, 
\end{align*}
and such that, on the event $\cE_{\textsc{vg}}$, if $\Gamma_k$ denotes the outermost separating surface of $\zeta^{(k)}(\omega,\omega')$, 
\begin{enumerate}
    \item The boundary condition induced on $\Int(\Gamma)$ by $\omega(E(\Lambda_m^\xi) \setminus \Int(\Gamma))$ are identical to those induced by $\omega'(E(\Lambda_m^{\xi'}) \setminus \Int(\Gamma))$; and
    \item $\omega(\Int(\Gamma)) = \omega'(\Int(\Gamma))$.
\end{enumerate}
\end{lemma}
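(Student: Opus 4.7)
The plan is to construct the coupling by an outside-in exploration of $(\omega,\omega')$, revealing them jointly from $\partial\Lambda_m^{(k)}$ inward until we uncover a shell of $k$-very-good blocks. Starting from the boundary, maintain a growing revealed region together with, for each revealed block $B_v$, the pair $(\omega(B_v),\omega'(B_v))$ sampled from the monotone grand coupling of the conditional random-cluster distributions on $B_v$ given everything revealed so far. By the domain Markov property, these conditional distributions are themselves random-cluster measures on $B_v$ with boundary conditions induced by the revealed edges (and by $\xi,\xi'$ on $\partial\Lambda_m$); the monotone grand coupling preserves $\omega\le\omega'$ throughout whenever $\xi\le\xi'$. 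At each step, reveal a still-unrevealed block that is $k$-$\star$-adjacent to the $\star$-component $D$ of non-very-good blocks touching $\partial\Lambda_m^{(k)}$ in $\zeta^{(k)}(\omega,\omega')$, and stop when no such block remains. By Remark~\ref{rem:outermost-separating-surface}, provided $D\cap\Lambda_{m/2}^{(k)}=\emptyset$, the outer $\star$-boundary $\Gamma_k:=\partial_{\out}D$ is a $k$-very-good separating surface inside the annulus $\Lambda_m^{(k)}\setminus\Lambda_{m/2}^{(k)}$.

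To bound $\mathbb P(\cE_{\textsc{vg}}^c)$ I would combine two inputs. First, applying Theorem~\ref{thm:rc-wsm} to a box $B_{Lk}(v)$ (for a large constant $L$) sandwiched between wired and free boundary conditions, and invoking the domain Markov property with respect to the revealed exterior, gives $\mathbb P(\omega(B_v)\ne\omega'(B_v)\mid \text{past})\le Ce^{-Lk/C}$ uniformly in the conditioning. Second, Lemma~\ref{lem:good-whp-low-temp} gives $\mathbb P(\omega(B_v)\text{ or }\omega'(B_v)\text{ is }k\text{-bad}\mid\text{past})\le Ce^{-k/C}$, again uniformly in the induced bc. Hence the step-by-step conditional probability that $B_v$ fails to be very good is at most $Ce^{-k/C}$. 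By the Liggett--Schonmann--Stacey domination result invoked around Corollary~\ref{cor:stoch-domination}, the set of non-very-good blocks is then stochastically dominated by an i.i.d.\ Bernoulli site percolation whose parameter tends to $0$ as $k\to\infty$. Taking $k$ large enough that this product measure is deeply subcritical, a standard Peierls / exponential-decay estimate yields $\mathbb P(D\cap\Lambda_{m/2}^{(k)}\ne\emptyset)\le Ce^{-m/C}$, which is the desired bound.

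Conclusions (1) and (2) then come almost for free. On $\cE_{\textsc{vg}}$, the exploration has revealed all edges outside $\Int(\Gamma)$, and every block of $\Gamma$ is very good, so $\omega(\Gamma)=\omega'(\Gamma)$. Applying Lemma~\ref{lem:separating-surface-disconnects-information} separately to $\omega$ and to $\omega'$ (using that very-good $\Rightarrow$ good for each individually), the boundary condition induced on $\Int(\Gamma)$ by the revealed exterior depends only on $\omega(\Gamma)$, respectively $\omega'(\Gamma)$; since these are identical, (1) follows. To finish, on $\cE_{\textsc{vg}}$ draw $\omega(\Int(\Gamma))$ from the random-cluster measure on $\Int(\Gamma)$ with this common bc, and set $\omega'(\Int(\Gamma)):=\omega(\Int(\Gamma))$. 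This gives (2), preserves the correct marginals on both sides (by the domain Markov property), and is compatible with the monotone coupling already used outside.

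The main technical obstacle is making the exploration procedure simultaneously well-defined, measurable with respect to the revealed history, and globally monotone. In particular, one must verify that the next block to be revealed is determined by data already revealed (so that conditional distributions make sense), that a local application of Theorem~\ref{thm:rc-wsm} extends from $\{\one,\zero\}$ to arbitrary induced boundary conditions (by the standard sandwich argument plus domain Markov), and that the block-by-block monotone couplings compose into a single monotone coupling of the global measures---these are the standard but delicate ingredients handled for a slightly different coarse-graining in~\cite[Lemma~3.3]{DCGR20}.
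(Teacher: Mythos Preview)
Your approach is essentially the same as the paper's: an outside-in exploration under the monotone coupling, revealing blocks from $\partial\Lambda_m^{(k)}$ inward and propagating through $\star$-neighbors of non-very-good blocks until a very-good shell is found; items~(1)--(2) then follow from Lemma~\ref{lem:separating-surface-disconnects-information} exactly as you describe. The probability bound also uses the same two inputs (Lemma~\ref{lem:good-whp-low-temp} for $k$-badness, Theorem~\ref{thm:rc-wsm} for disagreement under the sandwich).

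One point to tighten: the appeal to Liggett--Schonmann--Stacey is not justified as stated. LSS (as used in Corollary~\ref{cor:stoch-domination}) requires a uniform lower bound on $\mathbb P(\zeta_v=1\mid \zeta_w:d(w,v)\ge r)$, where $\zeta_v$ is a finite-range functional of an underlying field. Here $\zeta^{(k)}_v(\omega,\omega')$ depends on the \emph{coupled} pair $(\omega(B_v),\omega'(B_v))$, whose joint law under the exploration depends on the full exploration history rather than a spatial window, so the LSS hypothesis is not obviously met. The paper therefore skips LSS and runs the Peierls argument directly inside the exploration: if the revealed region reaches $\Lambda_{m/2+k}^{(k)}$, one extracts a sequence $v_{j_1},\dots,v_{j_s}$ of non-very-good blocks with $s\ge m/(16k)$, spaced at $\sim_\star^k$-distance exactly~$3$ so that each $B_{2k}(v_{j_i})$ is disjoint from the previously revealed blocks in the subsequence; your step-by-step conditional bound $Ce^{-k/C}$ is then valid uniformly at each such block, and a union bound over the at most $(3^{3d})^{s}$ such paths gives $\exp(-\Omega(m))$. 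This is precisely the ``standard Peierls'' step you allude to, just carried out without the LSS detour and with the spacing made explicit to guarantee the conditional bounds apply.
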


The proof of this lemma is very similar to that of~\cite[Lemma 3.3]{DCGR20} where it was applied simply to establish Theorem~\ref{thm:rc-wsm}. 
Due to this similarity, we defer the proof and only include it for completeness in Appendix~\ref{app:deferred-proofs}. Intuitively, the coupling goes by revealing, from the outside in, the components of $k$-very bad blocks, so that when the revealing process terminates one has exposed only the outermost separating surface of $k$-very good blocks and nothing inside it. Lemma~\ref{lem:separating-surface-disconnects-information} then ensures that the two configurations can be drawn from the identity coupling inside the separating surface of $k$-very good blocks.

\subsubsection{Using coarse-graining to match the giant components}
While the very good separating surface is sufficient for coupling the component structure of the random-cluster configurations $(\omega(\Lambda_{r/2}), \omega'(\Lambda_{r/2}))$, we also need to couple their corresponding Ising configurations, including in the presence of (a) $\plusone$ boundary condition or (b) conditioning of the form of $\widehat \pi$. This requires a more refined understanding of the component structures of the random-cluster measures, which is the purpose of this section. 

We define two different events for the random-cluster measure on $\Lambda_m$, used together in the proof of Theorem~\ref{thm:Ising-wsm-within-phase-intro}. We refer the reader to Figure~\ref{fig:E-events} for visualizations. The first such event is the following. 

\begin{figure}[t]
	\begin{subfigure}[b]{.48\textwidth}
	\centering
		\begin{tikzpicture}[scale = .45]
		\node at (0,0) {\includegraphics[width = 4.5cm]{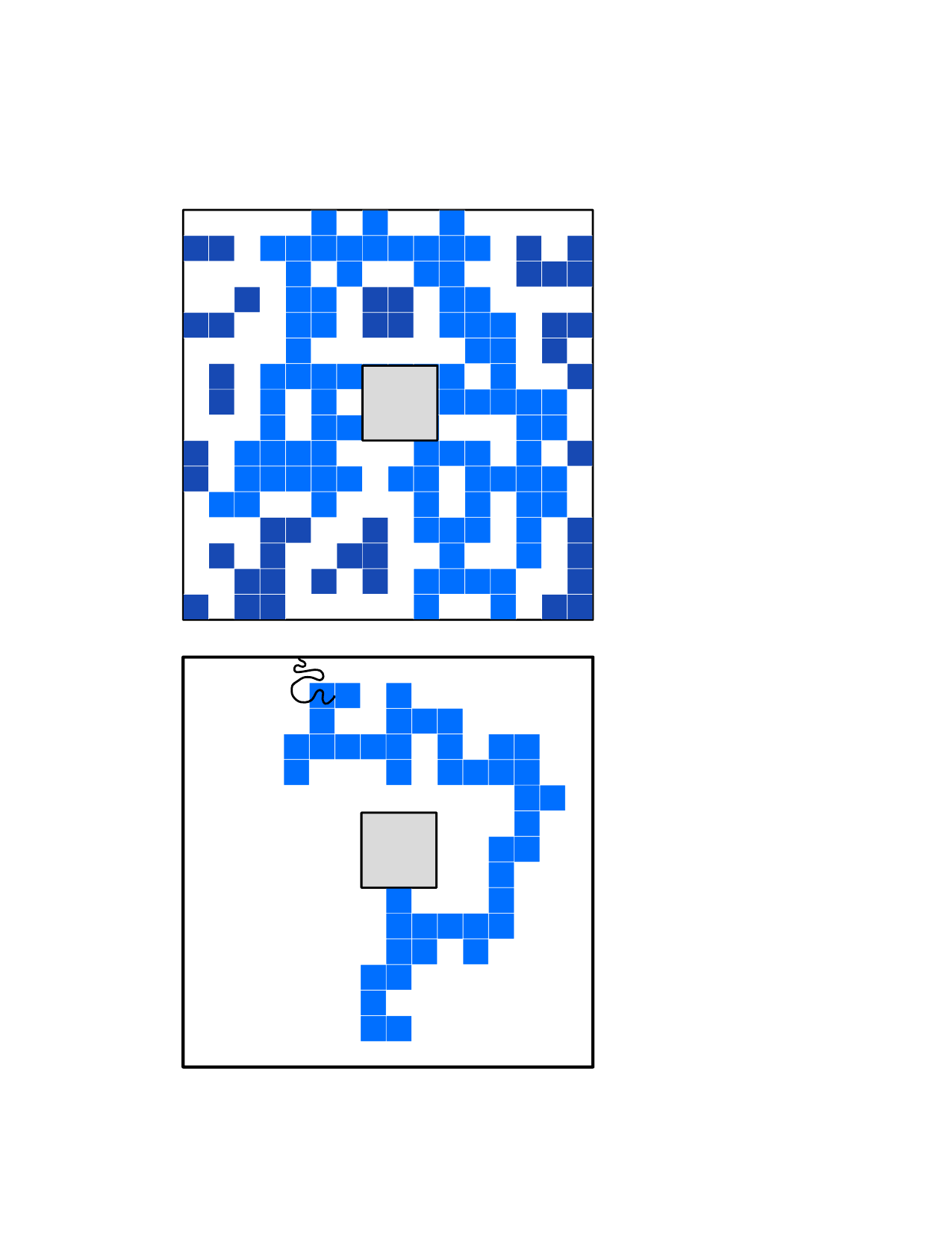}};
		\node at (.25,.25) {$A$};
%		\draw[|-|] (-5,-2.8)--(5,-2.8);
%		\draw[->] (0,-2.8)--(0,3);
%		\shade[ball color = red!40, opacity = 1] (0,-2.525) circle (.25cm);
%		\draw [->] (.3,-2.5) to [out=10,in=220] (1,-2.2);
%		\draw [->] (-.3,-2.5) to [out=180-10,in=180-220] (-1,-2.2);
		\end{tikzpicture}
		\subcaption{$\mathcal E_{m,A}$}
	\end{subfigure}
	\begin{subfigure}[b]{.48\textwidth}
	\centering
		\begin{tikzpicture}[scale = .45]
		\node at (0,0) {\includegraphics[width = 4.5cm]{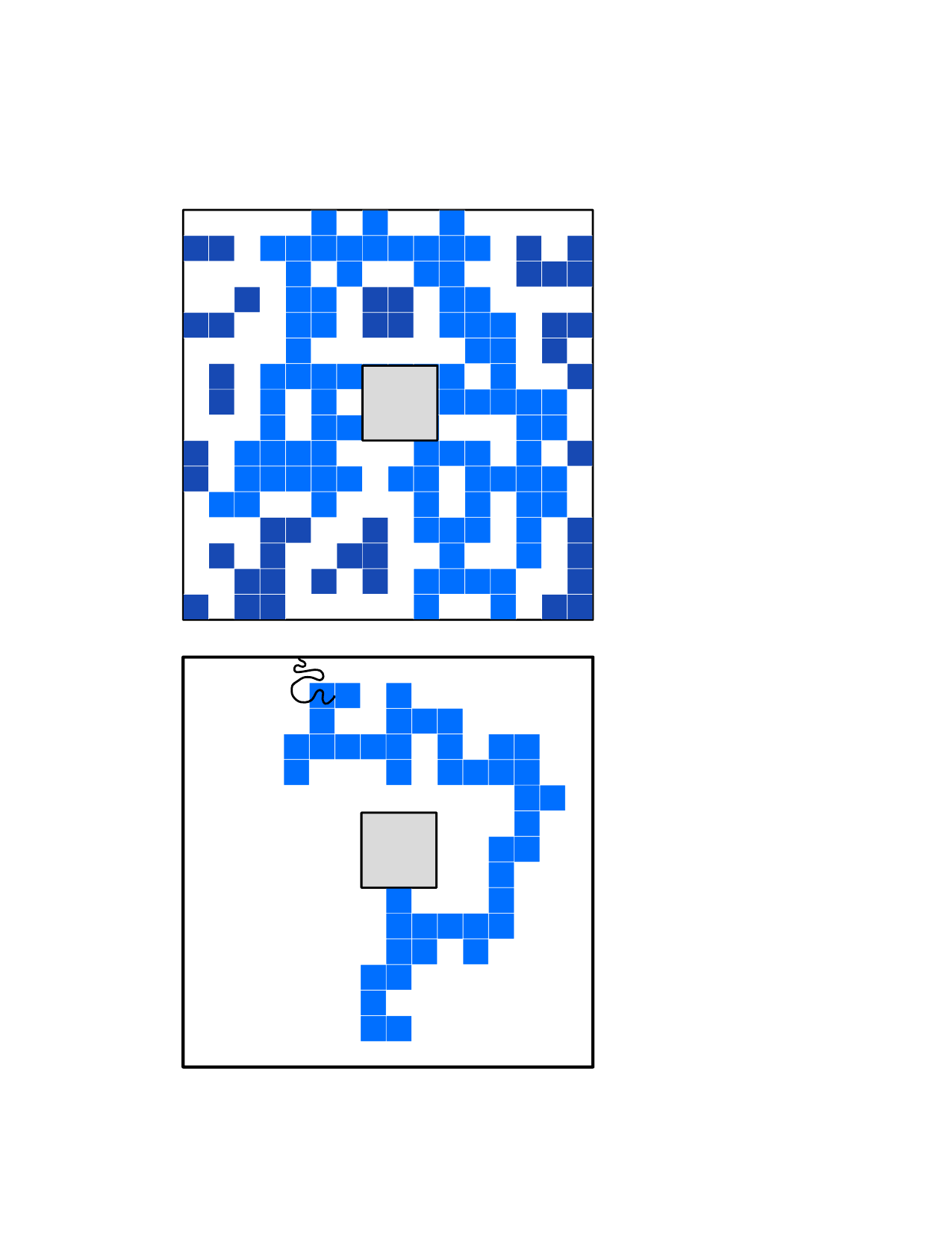}};
		\node at (.25,.25) {$A$};
		\node at (.25,.25 -5.2) {$|$};
		\node at (.25,.25 +4.63) {$|$};
		\node at (.25 + 4.7,.25 ) {$=$};
		\node at (.25 -5.1 ,.25) {$=$};

%		\draw[|-|] (-5,-2.8)--(5,-2.8);
%		\draw[->] (0,-2.8)--(0,3);
%		\shade[ball color = red!40, opacity = 1] (4.525,2.7) circle (.25cm);
%		\shade[ball color = red!40, opacity = 1] (-4.525,2.7) circle (.25cm);
%		\draw [->] (4.525,2.4) to [out=-95,in=85] (4.475,1.7);
%		\draw [->] (-4.525,2.4) to [out=-85,in=95] (-4.475,1.7);
		\end{tikzpicture}
		\subcaption{$\mathcal E_{m,A}^{\theta}$}
	\end{subfigure}
	\caption{(a) The event $\mathcal E_{m,A}$ (on $\Lambda_m$) requires that the good blocks (blue) of the coarse graining connect $\partial A^{(k)}$ to $\partial \Lambda_{m-k}^{(k)}$ and that $\omega$ itself connects that coarse-grained component to the boundary $\partial \Lambda_m$ (black line). (b) The event $\mathcal E_{m,A}^{\theta}$ (on the torus) requires that there is one large coarse-grained component (blue) having positive density at least $\theta$ and connecting $\partial A^{(k)}$ to $\partial \Lambda_{m-k}^{(k)}$, and all $k$-bad components (white) are not too large.}
	\label{fig:E-events}
\end{figure}

\begin{definition}\label{def:E-m-A}
For $A\subset \Lambda_{m/2}$, define $\cE_{m,A}$ as the set of $\omega$ such that the $k$-good coarse-graining $\eta^{(k)}(\omega)$ has a $k$-open path connecting $\partial A^{(k)}$ to $\partial \Lambda_{m-k}^{(k)}$, and the largest open cluster in that path is connected to $\partial \Lambda_m$ in $\omega$. 
\end{definition}

While the above event avoided discussing the coarse-graining within distance $k$ of the boundary $\partial \Lambda_m$, the next event is to be viewed for coarse-grainings of the torus $\Lambda_m^p$; therefore we note that the notions of cluster and adjacency of blocks are to be viewed as being on the coarse-graining $\mathbb T^{(k)}_m$. 

\begin{definition}\label{def:E-m-p}
Fix $\theta$. For $A \subset \Lambda_{m/2}$, let $\cE_{m,A}^{\theta}$ be the set of $\omega$ such that
\begin{enumerate}
    \item The largest $\star$-cluster of $k$-bad blocks has size at most $m/4k$;
    % There is at most one cluster of $k$-good blocks of $\omega$ having size greater than $m/4k$. 
    \item The largest cluster of $k$-good blocks has size at least $\theta |\Lambda_m^{(k)}|$, and contains a $k$-good path connecting $\partial A^{(k)}$ to $\partial \Lambda_{m-k}^{(k)}$. 
\end{enumerate}
We also write $\cE_{m}^\theta$ to denote $\cE_{m,A}^{\theta}$ with the last requirement of a $k$-good path connecting $\partial A^{(k)}$ to $\partial \Lambda_m^{(k)}$ omitted.  Thus for all $A$, we have $\cE_{m,A}^\theta\subset \cE_{m}^\theta$. 
\end{definition}

\subsubsection{Bounding the probability of the $\cE_{m,A},\cE_{m,A}^{\theta}$ events} 
The goal of this subsection is to prove the following pair of results for $k$ sufficiently large. 
\begin{proposition}\label{prop:E-m-probability}
Let $d\ge 2$ and $p>p_c(d)$, and $v= (0,...,0)$. As long as $k$ is sufficiently large, then, uniformly over all boundary conditions $\xi$, for all $r\le m$, we have  
\begin{align*}
    \pi_{\Lambda_m^\xi}^\rc \big((\cE_{m,B_{r/2}(v)})^c\big) \le Ce^{  - r/C}\,.
\end{align*}
\end{proposition}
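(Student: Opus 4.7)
The plan is to decompose the event $\cE_{m,A}$ into its two constituent requirements and handle them separately. Recall that $\cE_{m,A}$ asks that (i) the $k$-good coarse-graining $\eta^{(k)}(\omega)$ contains a $k$-open path from $\partial A^{(k)}$ to $\partial\Lambda_{m-k}^{(k)}$, and (ii) the largest $\omega$-cluster along this path reaches $\partial\Lambda_m$. I will first argue that (ii) is automatic given (i), reducing the problem to bounding the failure of (i). Suppose (i) holds with a $k$-open path $P=(x_0,\ldots,x_\ell)$, $x_\ell\in\partial\Lambda_{m-k}^{(k)}$. By Observation~\ref{obs:connected-component-good-blocks} applied to the $k$-connected set $\{x_0,\ldots,x_\ell\}$, the union $\omega(\bigcup_i B_{x_i})$ has a unique connected component $\mathcal{C}$ of size $\ge k$. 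Because $x_\ell\in\partial\Lambda_{m-k}^{(k)}$, one of the $2d$ faces of $\partial B_{x_\ell}$ lies inside $\partial\Lambda_m$, and since $B_{x_\ell}$ is $k$-good its large cluster (which must equal $\mathcal{C}$ by uniqueness) touches all $2d$ faces and hence meets $\partial\Lambda_m$.

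For part (i), standard duality shows that failure is equivalent to the existence of a $k$-$\star$-connected set $\mathcal{S}\subset\Lambda_m^{(k)}$ of $k$-bad blocks that separates $\partial A^{(k)}$ from $\partial\Lambda_{m-k}^{(k)}$; such a separator must enclose $A^{(k)}=B_{r/2}(v)^{(k)}$, and by the isoperimetric inequality in $k\mathbb{Z}^d$ therefore satisfies $|\mathcal{S}|\ge c(r/k)^{d-1}$. To bound the probability that every block in a fixed $\mathcal{S}$ is $k$-bad, I run a Peierls-type estimate tailored to the non-monotone event of $k$-goodness: greedily select a subset $\mathcal{S}'\subseteq\mathcal{S}$ with pairwise $\ell_\infty$-distance at least $5k$, so that the doubled blocks $\{B_{2k}(x):x\in\mathcal{S}'\}$ are disjoint and $|\mathcal{S}'|\ge|\mathcal{S}|/c_d$ for some $c_d=c_d(d)$. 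Applying the random-cluster domain Markov property together with the uniform-in-boundary bound in Lemma~\ref{lem:good-whp-low-temp}, and iterating over $x\in\mathcal{S}'$, gives
\[
 \pi^{\rc}_{\Lambda_m^\xi}\bigl(\text{every } B_x,\ x\in\mathcal{S}',\ \text{is }k\text{-bad}\bigr)\le (Ce^{-k/C})^{|\mathcal{S}'|}.
\]

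A union bound over $\mathcal{S}$ then completes the proof. The number of $k$-$\star$-connected subsets of $k\mathbb{Z}^d$ of size $s$ passing through a fixed coarse vertex is bounded by $a_d^s$, and any enclosing separator $\mathcal{S}$ must intersect a fixed coordinate ray emanating from $v$, giving at most $m/k$ candidate anchoring vertices. Choosing $k$ large enough that $a_d(Ce^{-k/C})^{1/c_d}<\tfrac12$, the resulting sum is a geometric series and yields
\[
 \pi^{\rc}_{\Lambda_m^\xi}\bigl((\cE_{m,A})^c\bigr)\le \tfrac{m}{k}\sum_{s\ge c(r/k)^{d-1}} a_d^{\,s}(Ce^{-k/C})^{s/c_d}\le Ce^{-r/C},
\]
after absorbing the $m/k$ prefactor into the exponential for $r$ large; the bound is trivial for $r=O(1)$ by taking $C$ large.

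The main obstacle I anticipate is the non-monotonicity of the $k$-goodness event, which rules out a direct FKG-based Peierls argument and also precludes simply invoking the Bernoulli domination of Corollary~\ref{cor:stoch-domination}. The decimation-plus-iterated-conditioning device together with the boundary-uniform estimate of Lemma~\ref{lem:good-whp-low-temp} is what enables the product bound above. A secondary subtlety worth flagging is that the stochastic domination of Corollary~\ref{cor:stoch-domination} only holds on the interior $\Lambda_{m-2k}^{(k)}$, so a separating surface of bad blocks may touch the outer boundary layer $\Lambda_m^{(k)}\setminus\Lambda_{m-2k}^{(k)}$; working directly with $\pi^{\rc}_{\Lambda_m^\xi}$ rather than its Bernoulli lower bound sidesteps this issue entirely.
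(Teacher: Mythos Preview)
Your reduction of (ii) to (i) is correct and is a genuine simplification the paper does not make: once the $k$-good path reaches $x_\ell\in\partial\Lambda_{m-k}^{(k)}$, the block $B_{x_\ell}$ already has a face lying in $\partial\Lambda_m$, and $k$-goodness forces its large cluster (hence, by Observation~\ref{obs:connected-component-good-blocks}, the large cluster of the whole path) to touch it.

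However, your Peierls bound for (i)$^c$ has not sidestepped the boundary issue. The restriction of Corollary~\ref{cor:stoch-domination} to $\Lambda_{m-2k}^{(k)}$ is there precisely \emph{because} Lemma~\ref{lem:good-whp-low-temp} needs the full buffer $B_{2k}(x)$ inside the domain, and your iterated-conditioning step invokes exactly that lemma. A $\star$-connected bad separator of $\partial A^{(k)}$ from $\partial\Lambda_{m-k}^{(k)}$ can lie on $\partial\Lambda_{m-k}^{(k)}$ (indeed, if the $\eta$-open cluster of $\partial A^{(k)}$ fills all of $\Lambda_{m-2k}^{(k)}$ but every site of $\partial\Lambda_{m-k}^{(k)}$ is $k$-bad, the only separator is $\partial\Lambda_{m-k}^{(k)}$ itself). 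For any $x$ there, $B_{2k}(x)\not\subset\Lambda_m$; after domain Markov you land on a random-cluster measure on a truncated box, to which Lemma~\ref{lem:good-whp-low-temp} as stated does not apply, and so the product bound $(Ce^{-k/C})^{|\cS'|}$ is unjustified. You also misdiagnose the obstacle: the path-existence event is increasing \emph{in the coarse-grained process~$\eta$}, so Corollary~\ref{cor:stoch-domination} applies to it directly on $\Lambda_{m-2k}^{(k)}$; non-monotonicity of $k$-goodness as a function of $\omega$ is irrelevant at that level, and the decimation device is not what is needed. The paper's argument exploits this monotonicity: it uses the Bernoulli domination for the increasing-in-$\eta$ event that the $\eta$-cluster of $\partial B^{(k)}$ reaches $\partial\Lambda_{m-2k}^{(k)}$ with density at least $\tfrac12$ there, and then crosses the remaining boundary layer in $\omega$ rather than in~$\eta$, by exhibiting $\Theta((m/k)^{d-1})$ edge-disjoint length-$k$ paths from the many good boundary blocks out to $\partial\Lambda_m$, each open with uniformly positive probability regardless of the rest of the configuration. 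It is this last step, done in $\omega$, that handles the strip where the coarse-graining estimate is unavailable.
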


\begin{proposition}\label{prop:E-m-theta-probability}
Let $d\ge 2$ and $p>p_c(d)$, and $v= (0,...,0)$. For all $k$ sufficiently large, there exists $\theta_0(k,p,d)$ such that for every $\theta<\theta_0$, for all $r\le m$, 
\begin{align*}
    \pi_{\Lambda_m^p}^\rc\big((\cE_{m,B_{r/2}(v)}^{\theta})^c\big) \le Ce^{ - r/C}\,. 
\end{align*}
\end{proposition}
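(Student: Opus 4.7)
The plan is to reduce to supercritical Bernoulli site percolation on the coarse-grained torus via Corollary~\ref{cor:stoch-domination}, then combine classical estimates on the largest cluster in the deep supercritical regime with a Peierls-type bound for the remaining geometric requirement. First I would observe that the event $\cE_{m,A}^{\theta}$ is increasing in $\eta^{(k)}(\omega)$: opening more blocks can only enlarge or merge $k$-good clusters and thereby enlarge the largest one and any good path through it. So by Corollary~\ref{cor:stoch-domination}, it suffices to prove the statement for an iid Bernoulli$(q_k)$ site percolation $\tilde\eta$ on $\mathbb T_m^{(k)}$ with $q_k = 1 - Ce^{-k/C}$. Pick $k$ large enough that $q_k$ is arbitrarily close to $1$, in particular far above the critical point for site percolation on $\mathbb Z^d$.

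Next I would establish the uniqueness and density requirements (1) and (2) of Definition~\ref{def:E-m-p}. For the uniqueness requirement, if two open $k$-clusters each have more than $m/(4k)$ sites, then they are $\star$-separated on the torus by a closed dual surface of size at least $m/k$. A Peierls bound weighting each $\star$-connected set of size $\ell$ by $K^{\ell}(1-q_k)^{\ell}$ (with $K=K(d)$ the $\star$-connectivity constant) and anchoring the surface at a distinguished vertex gives probability at most $Ce^{-m/C}$ once $k$ is large. For the density requirement, the classical fact for deeply supercritical iid site percolation on tori (see~\cite{Pisztora96,Bodineau05}) is that, for any $\theta_0 < \theta(q_k)$, the largest open $k$-cluster occupies density at least $\theta_0 |\Lambda_m^{(k)}|$ except on an event of probability $\le Ce^{-m^{d-1}/C}$; taking $k$ large makes $\theta_0$ as close to $1$ as desired. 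Both contributions are bounded by $Ce^{-m/C} \le Ce^{-r/C}$ since $r\le m$.

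On the complementary event where uniqueness and density hold, the last clause of $\cE_{m,A}^{\theta}$ fails only if the unique giant $G$ misses one of $\partial A^{(k)}$ or $\partial \Lambda_{m-k}^{(k)}$, which in turn forces a $\star$-connected barrier $\Gamma$ of closed ($k$-bad) sites separating $A$ from $\partial \Lambda_{m-k}^{(k)}$ in $\mathbb T_m^{(k)}$. On the torus, $\Gamma$ must either encircle $A$ or encircle the $k$-collar $\mathbb T_m^{(k)}\setminus \Lambda_{m-k}^{(k)}$, so by isoperimetry $|\Gamma|\ge c(r/k)^{d-1}$ in the first case and $|\Gamma|\ge c(m/k)^{d-1}$ in the second. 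A Peierls estimate, anchoring $\Gamma$ by its intersection with a fixed coordinate axis through the center of $A$ (at most $O(m/k)$ anchor positions, which one further cuts down using the geometry of where $\Gamma$ can lie relative to $A$), gives
\[
   \mathbb P(\exists\,\Gamma) \;\le\; \frac{m}{k}\sum_{\ell\ge c(r/k)^{d-1}} \bigl(KCe^{-k/C}\bigr)^{\ell} \;\le\; Ce^{-r/C},
\]
for $k$ sufficiently large so that $KCe^{-k/C}<1$ with a margin that absorbs both the polynomial prefactor and the conversion from $\ell\sim (r/k)^{d-1}$ to $r/C$ in the exponent.

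The main obstacle I expect is the last Peierls step: the combinatorial cost of choosing an anchor for $\Gamma$ carries a prefactor polynomial in $m$, while the exponential gain is only $e^{-c(r/k)^{d-1}/k}$. One must therefore choose $k$ large enough (as a function of $d$, but independently of $m$ and $r$) so that the gain dominates in the relevant window $r=\omega(\log m)$; in the complementary window $r=O(\log m)$ the claimed bound $Ce^{-r/C}$ is already trivial upon enlarging~$C$. A clean way to arrange the anchoring is to note that the innermost barrier around $A$ must contain a block at bounded distance from $\partial A^{(k)}$, so its anchor count is only polynomial in $r$, not in $m$; executing this carefully is the only delicate part of the argument.
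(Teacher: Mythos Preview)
Your overall strategy matches the paper's: reduce to i.i.d.\ Bernoulli site percolation on $\mathbb T_m^{(k)}$ via Corollary~\ref{cor:stoch-domination}, then handle the uniqueness, density, and path requirements separately using Peierls-type bounds and standard supercritical percolation estimates.

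There is, however, a genuine gap in your first step. The event $\cE_{m,A}^\theta$ is \emph{not} increasing in $\eta^{(k)}(\omega)$: requirement~(1) of Definition~\ref{def:E-m-p}---that at most one good cluster has size exceeding $m/4k$---can fail upon opening an additional block, since a sub-threshold cluster may thereby grow past the threshold without merging into the giant. Your justification (``opening more blocks can only enlarge or merge $k$-good clusters and thereby enlarge the largest one'') addresses only the density and path clauses, not uniqueness. Consequently the blanket reduction to Bernoulli percolation is invalid for item~(1). The paper repairs this by observing that the \emph{failure} of uniqueness forces a large $k$-$\star$-cluster of \emph{bad} blocks; this containment is in an event increasing in $\mathbf 1-\eta^{(k)}(\omega)$, and Corollary~\ref{cor:stoch-domination} gives $\mathbf 1-\eta^{(k)}(\omega)\preceq \mathbf 1-\tilde\eta$, so the subcritical Peierls bound applies. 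Your subsequent Peierls treatment of uniqueness is in the right spirit once rerouted through this decreasing event rather than through the (invalid) global monotonicity.

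Your handling of the path clause is more elaborate than necessary. The paper simply invokes the argument of Proposition~\ref{prop:E-m-probability}: absence of an open $k$-path from $\partial A^{(k)}$ to $\partial\Lambda_{m-k}^{(k)}$ forces a $\star$-component of bad blocks of size at least of order $|\partial A^{(k)}|\asymp (r/k)^{d-1}$, and that component can be anchored at a site of $\partial A^{(k)}$, so the combinatorial prefactor is polynomial in $r$, not in $m$. Since $d\ge 2$, the resulting $Ce^{-r^{d-1}/C}$ bound already implies the claimed $Ce^{-r/C}$; your case split on $r$ versus $\log m$ and the worry about an $O(m/k)$ anchor count are unnecessary.
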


\begin{proof}[\textbf{\emph{Proof of Proposition~\ref{prop:E-m-probability}}}]
Fix $v,r$ and for ease of notation, let $B= B_{r/2}(v)$. Consider the $k$-good coarse-graining $\eta^{(k)}(\omega)$, and recall from Corollary~\ref{cor:stoch-domination} that it stochastically dominates the $\mbox{Ber}(1-Ce^{ - k/C})$ independent percolation  process $\tilde \eta$ on $\Lambda_{m-2k}^{(k)}$. Let us consider the event $\tilde \cE_{m,B}$ that a coarse-grained percolation~$\eta$ has an open component $\cC_1(\eta)$ connecting $\partial B^{(k)}$ to $\partial \Lambda_{m-2k}^{(k)}$ and moreover, that component satisfies
\begin{align*}
    \cC_1(\eta) \cap \partial \Lambda_{m-2k}^{(k)} \ge |\partial \Lambda_{m-2k}^{(k)}|/2\,.
\end{align*}
Observe that $\tilde \cE_{m,B}$ is increasing, and the event $\{\eta^{(k)}(\omega)\in \tilde \cE_{m,B}\}$ is measurable with respect to $\omega(\Lambda_{m-k})$. By its increasing nature and the above stochastic domination, 
\begin{align*}
    \pi_{\Lambda_m^{\xi}}^{\rc}(\eta^{(k)}(\omega)\in \tilde \cE_{m,B}^c) \le \mathbb P(\tilde \eta \in \tilde \cE_{m,B}^c)\,.
\end{align*}

Then by standard facts about Bernoulli percolation, $\widetilde\eta$ has this property with probability $1-Ce^{ - r^{d-1}/C}$ as long as $k$ is sufficiently large, so that $\widetilde \eta$ is sufficiently super-critical. (This can be seen by looking at the process $\one - \tilde \eta$, and noticing that in order for the complement of the above event to occur, either $\one - \tilde \eta$ needs a $\star$-component of size at least proportional to $|\partial B^{(k)}|$, or $\one - \tilde \eta$ needs to be open on $|\partial \Lambda_m-{2k}^{(k)}|/2$ of the sites of $\partial \Lambda_{m-2k}^{(k)}$. For $k$ large, the first of these has probability $Ce^{ - r^{d-1}/C}$ and the second of these has probability $Ce^{- m^{d-1}/C}$.) Thus we have 
\begin{align}\label{eqn:ajs5}
    \pi_{\Lambda_m^\xi}^\rc\big(\eta^{(k)}(\omega) \in \cE_{m,B}^c\big) \le Ce^{ - r^{d-1}/C} +  \max_{\omega(\Lambda_{m-k}) \in \tilde\cE_{m,B}} \pi^\rc_{\Lambda_m^{\xi}}\big(\cE_{m,B}^c \mid \omega(\Lambda_{m-k})\big)\,.
\end{align}
Now fix any configuration $\omega(\Lambda_{m-k}) \in \tilde \cE_{m,B}$. We consider the probability that there is a connection in $\omega$ between the largest component of the $k$-good path from $\partial B^{(k)}$ to $\partial \Lambda_{m-2k}^{(k)}$ and the boundary $\partial \Lambda_m$. Notice that since $\omega(\Lambda_{m-k})\in \tilde\cE_{m,B}$, there are at least $|\partial \Lambda_{m-2k}^{(k)}|/(4d)$ disjoint blocks $(B_{x_i})$ for $x_i \in \partial \Lambda_{m-2k}^{k}$ that are $k$-good in $\eta^{(k)}(\omega)$ and are part of $\cC_1 (\eta^{(k)}(\omega))$. For each of these blocks $B_{x_i}$, there is a vertex $v_i \in \partial \Lambda_{m-k}$ contained in the largest component of $\omega(B_{x_i})$.

Now notice that there is a collection $(\gamma_i)$ of edge-disjoint paths connecting $v_i$ to $\partial \Lambda_m$, and each of these paths has length $k$. The probability that a fixed such path is open is at least $c^k$ for $c(p)>0$, independently of the configuration on $E(\Lambda_m)\setminus \gamma_i$, so the probability that none of them is open gives the bound: 
\begin{align*}
    \max_{\omega(\Lambda_{m-k})\in \tilde\cE_{m,B}} \pi^\rc_{\Lambda_m^{\xi}} (\cE_{m,B}^c \mid \omega(\Lambda_{m-k})) \le \mathbb P\big(\mbox{Bin}(|\partial \Lambda_{m-k}^{(k)}|/(4d), c^k) = 0\big) \le Ce^{ - m^{d-1}/C}\,.
\end{align*}
Plugging this bound into~\eqref{eqn:ajs5} then gives the desired bound on the probability of $\cE_{m,B}^c$. 
\end{proof}

\begin{proof}[\textbf{\emph{Proof of Proposition~\ref{prop:E-m-theta-probability}}}]
Fix $v,r$ and let $B = B_{r/2}(v)$. Fix $\theta<\theta_0(k,p,d)$ sufficiently small, to be chosen later. For ease of notation, let $\mathcal E_m = \mathcal E_{m,B}^\theta$ and let $\cE_{m} = \cE_{m,1}\cap \cE_{m,2}$ where the index $1$ or $2$ corresponds to the two different items in Definition~\ref{def:E-m-p}. Clearly we have
\begin{align*}
    \pi_{\Lambda_m^p}^{\rc} (\mathcal E_m^c) \le \pi_{\Lambda_m^p}^{\rc}(\cE_{m,1}^c) + \pi_{\Lambda_m^p}^{\rc}(\cE_{m,2}^c)\,.
\end{align*}

Let us begin by bounding the probability of $\cE_{m,1}^c$. By Corollary~\ref{cor:stoch-domination}, $\one - \eta^{(k)}(\omega) \preceq \one - \widetilde \eta$ on $\mathbb T_m = \Lambda_m^p$, where $\widetilde \eta$ is a $\mbox{Ber}(1-Ce^{ - k/C})$ percolation process on $\mathbb T_m^{(k)}$. The probability that $\one - \eta^{(k)}(\omega)$ has a $k$-$\star$-cluster (of $k$-bad blocks) of size at least $m/4k$ is  less than that of $\one - \widetilde \eta$ having a $k$-$\star$-cluster; but the percolation parameter of $\one - \widetilde \eta$ is $Ce^{- k/C}$, which when $k$ is large enough is sub-critical on $\Lambda_{m-k}^{(k)}$ endowed with $\star$-adjacency. In particular, this has probability at most $C\exp( - m/C)$. 

Next we turn to bounding the probability of $\cE_{m,2}^c$. The probability of the giant component not connecting $\partial B^{(k)}$ to $\partial \Lambda_{m-k}^{(k)}$ is bounded as in the proof of Proposition~\ref{prop:E-m-probability} by $Ce^{ - r^{d-1}/C}$. It remains to bound the probability of that giant component not having density at least $\theta$ in $\Lambda_m^{(k)}$. 
By standard properties of super-critical Bernoulli percolation, there exists $\theta_0(k)>0$ (going to $1$ as $k\to \infty$) such that with probability $1-Ce^{ - m^{d-1}/C}$, as long as $k$ is sufficiently large, the largest component of the process $\widetilde \eta$ has a connected component of size at least $\theta_0 |\Lambda_m^{(k)}|$. This also implies the desired property
for $\eta^{(k)}(\omega)$ by Corollary~\ref{cor:stoch-domination}. 
\end{proof}

\subsection{Edwards--Sokal representation of the Ising measure in the plus phase}\label{subsec:edwards-sokal-pi-hat}
The proof of Theorem~\ref{thm:Ising-wsm-within-phase-intro} will couple
$\widehat \pi$ to $\pi_{B_{r}^{\plus}(v)}$ via an intermediate distribution $\widetilde \pi^\es$ defined as follows. This latter measure's coloring stage will be more tractable to couple to that of $\pi_{B_r^{\plus}(v)}^\es$ later. 

\begin{definition}
Let $\widetilde \pi^\es_{\Lambda_n^p}$ be the joint distribution $(\widetilde \omega, \widetilde \sigma)$, obtained by 
\begin{enumerate}
    \item Sampling $\widetilde \omega \sim \pi_{\Lambda_n^p}^{\rc}$.
    \item Sampling $\widetilde \sigma$ by coloring the largest connected component of $\widetilde \omega$ with $+1$ (breaking ties according to some fixed ordering over all connected components), and coloring all other components independently and  uniformly over $\{\pm 1\}$. 
\end{enumerate}
\end{definition}

\begin{remark}\label{rem:pi-hat-edwards-sokal}
We can also extend the distribution $\widehat \pi$ to a joint distribution over Ising and random-cluster configurations. When $|\Lambda_n|$ is odd, it is clear that $\widehat \pi$ can be sampled from by first drawing a sample $\widehat \omega\sim \pi_{\Lambda_n^p}^\rc$ then coloring its connected components independently, uniformly over $\{\pm 1\}$, conditional on the magnetization being positive (because, irrespective of $\widehat \omega$, this has probability exactly $\frac 12$). When $|\Lambda_n|$ is even, we claim that the marginal of this distribution is within $Ce^{ - n^{d-1}/C}$ of $\widehat \pi$: indeed this is evident because it gives the correct relative weights to all configurations with non-zero magnetization, and the entire mass of zero magnetization (i.e., $\widehat \Omega \cap \widecheck \Omega$) is at most $Ce^{ - n^{d-1}/C}$ by~\eqref{eq:surface-order-LDP}. As this error is negligible everywhere, we abuse notation and simply call $\widehat \pi^\es_{\Lambda_n^p}$ this joint distribution. 
\end{remark}

The main result of this subsection will be the following. 

\begin{proposition}\label{prop:tilde-hat-comparison}
Fix $d\ge 2$ and let $\beta>\beta_c(d)$. Then, 
\begin{align*}
    \|\widetilde \pi_{\Lambda_n^p} - \widehat \pi_{\Lambda_n^p}\|_\tv \le Ce^{ - n/C}\,.
\end{align*}
\end{proposition}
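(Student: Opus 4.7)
The plan is to couple $\widehat\pi$ and $\widetilde\pi$ via a shared random-cluster configuration $\omega \sim \pi_{\Lambda_n^p}^\rc$. By Remark~\ref{rem:pi-hat-edwards-sokal}, up to a $Ce^{-n^{d-1}/C}$ parity correction, $\widehat\pi$ is realized by first sampling $\omega$ and then assigning spins $(\eta_i)$ to its clusters $\cC_1,\cC_2,\ldots$ (ordered by size) independently and uniformly in $\{\pm 1\}$, conditioned on the magnetization $M = \sum_i \eta_i |\cC_i| \ge 0$. Meanwhile $\widetilde\pi$ fixes $\eta_1 = +1$ and draws the remaining $\eta_i$ i.i.d.\ uniform. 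Taking the common-$\omega$ coupling,
\begin{equation*}
\|\widehat\pi - \widetilde\pi\|_\tv \le Ce^{-n^{d-1}/C} + \mathbb E_\omega\bigl[\|\widehat\pi(\cdot \mid \omega) - \widetilde\pi(\cdot \mid \omega)\|_\tv\bigr]\,,
\end{equation*}
so it suffices to bound the conditional TV distance between the two colorings of a common~$\omega$.

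\textbf{Comparison of colorings and Hoeffding.} Let $S = \sum_{i \ge 2} \eta_i |\cC_i|$, so that $M = \eta_1|\cC_1| + S$. The $\eta \mapsto -\eta$ symmetry of $S$ gives the direct calculation
\begin{equation*}
\widehat\pi(\eta_1 = -1 \mid \omega) = \frac{\mathbb P(S \ge |\cC_1|)}{\mathbb P(S \ge -|\cC_1|) + \mathbb P(S \ge |\cC_1|)} \le \mathbb P(|S| \ge |\cC_1|)\,,
\end{equation*}
and on $\{\eta_1 = +1\}$ the law of $(\eta_i)_{i\ge 2}$ under $\widehat\pi(\cdot\mid\omega)$ is the unconditional i.i.d.\ product restricted to $\{S \ge -|\cC_1|\}$, an event of probability $\ge 1 - \mathbb P(|S| \ge |\cC_1|)/2$. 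A two-step triangle inequality then yields
\begin{equation*}
\|\widehat\pi(\cdot\mid\omega) - \widetilde\pi(\cdot\mid\omega)\|_\tv \le 2\,\mathbb P(|S| \ge |\cC_1|)\,,
\end{equation*}
and Hoeffding's inequality on the independent Rademacher sum $S$ gives
\begin{equation*}
\mathbb P(|S|\ge |\cC_1|) \le 2\exp\!\Bigl(-\frac{|\cC_1|^2}{2(\max_{i\ge 2}|\cC_i|)(n^d - |\cC_1|)}\Bigr)\,.
\end{equation*}

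\textbf{Structural bound on $\omega$ --- the main step.} What remains is to establish, with $\pi^\rc_{\Lambda_n^p}$-probability $\ge 1 - Ce^{-n/C}$, that $|\cC_1| \ge \theta n^d$ and $\max_{i\ge 2}|\cC_i| \le Cn$, so that the Hoeffding exponent is $\le -cn^{d-1} \le -n/C$. For the lower bound on $|\cC_1|$, I would apply Proposition~\ref{prop:E-m-theta-probability} to $\Lambda_n^p$ with $r=n$: on $\cE_n^\theta$ (which has the required probability for $k$ large and $\theta$ small) there is a unique $k$-connected cluster $A_1$ of $k$-good blocks with $|A_1| \ge \theta |\Lambda_n^{(k)}|$, and by Observation~\ref{obs:connected-component-good-blocks} the region covered by $A_1$ supports a unique $\omega$-component of size $\Omega(n^d)$, which is necessarily $\cC_1$. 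For the upper bound on $\max_{i\ge 2}|\cC_i|$, the same event guarantees that every other $k$-good cluster has at most $n/(4k)$ blocks, while Corollary~\ref{cor:stoch-domination} together with sub-criticality of the bad-block percolation forces, with probability $\ge 1 - Ce^{-n/C}$, every $k$-$\star$-cluster of $k$-bad blocks to have $O(n)$ blocks. Any non-giant $\omega$-component is therefore confined to the union of a single smaller $k$-good cluster and a single sub-critical bad-block $\star$-cluster, and hence has at most $O(n\cdot k^d)=O(n)$ vertices. The main technical difficulty is this second bound: controlling $\max_{i\ge 2}|\cC_i|$ requires marrying the coarse-graining of Proposition~\ref{prop:E-m-theta-probability} with an exponential tail on bad-block $\star$-clusters, after which the integration over $\omega$ closes up all the error terms to the announced $Ce^{-n/C}$.
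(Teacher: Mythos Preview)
Your proposal is correct and follows essentially the same approach as the paper: couple via a common random-cluster configuration, restrict to the coarse-graining event $\cE_n^\theta$ (Proposition~\ref{prop:E-m-theta-probability}), and on that event apply Hoeffding using the cluster-size bounds $|\cC_1|=\Omega(n^d)$ and $\max_{j\ge 2}|\cC_j|=O(n)$. The paper packages the coloring comparison as Lemma~\ref{lem:tilde-hat-ising-coupling}, bounding via the symmetric difference of the events $\widetilde\Omega(\omega)$ and $\widehat\Omega(\omega)$ rather than your direct computation of $\widehat\pi(\eta_1=-1\mid\omega)$, and asserts the bound on $\max_{j\ge 2}|\cC_j|$ directly from $\cE_n^\theta$ rather than arguing separately about bad-block $\star$-clusters as you do, but the content is the same.
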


The main ingredient in the proof of Proposition~\ref{prop:tilde-hat-comparison} is the following coupling of their coloring stages when the random-cluster draws are in $\cE_{n}^{\theta}$. Recall that $\cE_n^{\theta}$ is the event of Definition~\ref{def:E-m-p} excluding the requirement of a $k$-good path connecting $\partial A^{(k)}$ to $\partial \Lambda_{n-k}^{(k)}$.

\begin{lemma}\label{lem:tilde-hat-ising-coupling}
Consider any configuration $\omega$ in $\cE_{n}^\theta$. Then 
\begin{align*}
    \|\widetilde \pi_{\Lambda_n^p}^\es (\widetilde \sigma \in \cdot \mid \widetilde \omega = \omega) - \widehat\pi_{\Lambda_n^p}^\es (\widehat \sigma \in \cdot \mid \widehat \omega = \omega)\|_\tv \le Ce^{ - n/C}\,.
\end{align*}
\end{lemma}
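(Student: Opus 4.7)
The plan is to use the Edwards--Sokal representation: conditional on $\omega$, both $\widetilde\pi^\es(\,\cdot\mid\omega)$ and $\widehat\pi^\es(\,\cdot\mid\omega)$ color each $\omega$-component $\cC$ with a spin $\eta_\cC \in \{\pm 1\}$, so both arise from a common base measure $\mu$ under which $(\eta_\cC)_\cC$ are i.i.d.\ uniform: $\widetilde\pi^\es(\,\cdot\mid\omega) = \mu(\,\cdot\mid \eta_{\cC_1}=+1)$, where $\cC_1$ denotes the largest $\omega$-component, while $\widehat\pi^\es(\,\cdot\mid\omega) = \mu(\,\cdot\mid M(\sigma)\ge 0)$ with $M(\sigma)=\sum_\cC |\cC|\eta_\cC$. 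A standard calculation bounds the total variation between the two conditional laws by $O\big(\mu(\sgn(M)\ne\eta_{\cC_1})\big)$, so the task reduces to showing
\begin{align*}
\mu\big(\sgn(M)\ne \eta_{\cC_1}\big)\le Ce^{-n/C}\,.
\end{align*}

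Writing $M=|\cC_1|\eta_{\cC_1}+X$ with $X=\sum_{\cC\ne\cC_1}|\cC|\eta_\cC$ a sum of independent $\pm 1$-weighted summands, Hoeffding's inequality reduces the bound to two deterministic estimates valid for $\omega\in\cE_n^\theta$: (i) $|\cC_1|\ge c_\theta n^d$, and (ii) $\max_{\cC\ne\cC_1}|\cC|\le C_\theta n^{d-1}$; combined with $\sum_{\cC\ne\cC_1}|\cC|^2\le \max\cdot\, n^d\le C_\theta n^{2d-1}$, this gives $|\cC_1|^2/\sum|\cC|^2\ge c n$ and hence the desired Hoeffding tail of order $e^{-n/C}$. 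Bound~(i) is a consequence of Observation~\ref{obs:connected-component-good-blocks}: by condition~(2) of $\cE_n^\theta$ the giant $k$-good cluster spans $\ge\theta|\Lambda_n^{(k)}|$ blocks, and the unique $\omega$-component of size $\ge k$ in their union (which must be $\cC_1$) contains the $2d$-side-spanning cluster of every constituent $k$-good block, contributing of order $k$ vertices per block and therefore $c_\theta n^d$ vertices overall. For~(ii), every non-giant $\omega$-component is confined to a single $k$-$\star$-connected region of blocks lying outside the giant cluster; controlling the size of such regions---each containing at most $O(n/k)$ blocks and spanning $O(nk^{d-1})=O(n^{d-1})$ vertices for $k$ a constant---yields the claim.

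The main obstacle is the deterministic control in~(ii) on the sizes of the $k$-$\star$-connected components of non-giant blocks: the event $\cE_n^\theta$ as stated only constrains non-giant $k$-good clusters, whereas such $\star$-components can contain both small $k$-good clusters and $k$-bad blocks glued together. I would resolve this by augmenting $\cE_n^\theta$ with the requirement that every $k$-$\star$-connected component of non-giant blocks contains at most $n/(4k)$ blocks; by the sub-critical stochastic domination of Corollary~\ref{cor:stoch-domination} applied to $\one-\widetilde\eta$, the additional probabilistic cost is at most $Ce^{-n/C}$ and is absorbed into the bound of Proposition~\ref{prop:E-m-theta-probability}. Once Hoeffding delivers $\mu(|X|\ge |\cC_1|/2)\le Ce^{-n/C}$, a standard manipulation translating between conditioning on $\{M\ge 0\}$ and $\{\eta_{\cC_1}=+1\}$---which differ under $\mu$ by an event of measure $\le Ce^{-n/C}$---completes the argument.
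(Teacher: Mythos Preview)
Your proposal is correct and follows essentially the same route as the paper: both arguments reduce the total-variation bound between the two conditionings of the i.i.d.\ coloring measure to controlling $\mu(\sgn(M)\ne \eta_{\cC_1})$, then invoke Hoeffding's inequality using the two deterministic inputs $|\cC_1|\ge c_\theta n^d$ and $\max_{\cC\ne\cC_1}|\cC|\le C_\theta n^{d-1}$ to get a tail of order $e^{-n/C}$ (the paper actually gets $e^{-n^{d-1}/C}$, which is at least as good).

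The one place where you are more careful than the paper is the justification of the second deterministic bound. The paper simply asserts $|\cC_j|\le k^{d-1}n$ for $j\ge 2$ ``because $\omega\in\cE_n^\theta$'', with the gloss that non-giant good-block clusters have at most $n/(4k)$ blocks and each block contributes at most $k^d$ vertices. As you note, this does not immediately cover a non-giant $\omega$-component that threads through $k$-bad blocks (or mixes small good clusters and bad blocks), since item~(1) of $\cE_n^\theta$ only constrains clusters of $k$-\emph{good} blocks. Your proposed remedy---augment $\cE_n^\theta$ by also requiring that every $k$-$\star$-component of non-giant blocks contain at most $n/(4k)$ blocks, at an additional probabilistic cost of $Ce^{-n/C}$ via the subcritical domination of Corollary~\ref{cor:stoch-domination}---is clean and costs nothing in the final bound. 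So your proof is the paper's proof with this minor logical gap patched.
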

\begin{proof}
Fix any $\omega \in \cE_{n}^\theta$ and consider any event $A$ for the Ising configuration. Let $\widetilde \Omega = \widetilde \Omega(\omega)$ be the event that the largest component of $\omega$ is colored $+1$; call $\widehat \Omega(\omega)$ the event that the magnetization after the coloring step is (strictly) positive.  Consider
\begin{align*}
    \widetilde \pi^\es_{\Lambda_n^p} (A \mid \omega) - \widehat\pi^\es_{\Lambda_n^p} (A \mid \omega)  & =  \frac{\pi^\es_{\Lambda_n^p} (A, \widetilde \Omega(\omega) \mid \omega)}{\pi^\es_{\Lambda_n^p}(\widetilde \Omega(\omega)\mid \omega)} - \frac{\pi^\es_{\Lambda_n^p} (A , \widehat \Omega(\omega) \mid \omega^p)}{\pi_{\Lambda_n^p}^\es(\widehat \Omega(\omega) \mid \omega)} \\ 
    &  \le 2 \big(\pi_{\Lambda_n^p}^\es(A, \widetilde \Omega(\omega) \mid \omega) - \pi_{\Lambda_n^p}^\es (A , \widehat \Omega(\omega) \mid \omega)\big)  + \big| \pi_{\Lambda_n^p}^\es(\widehat \Omega(\omega) \mid \omega) - \frac{1}{2}\big|\,.
\end{align*}
The difference in the two probabilities can be bounded as
\begin{align*}
    \pi^\es_{\Lambda_n^p}(\widetilde \Omega^c(\omega) ,\widehat \Omega(\omega) \mid \omega) + \pi^\es_{\Lambda_n^p}(\widetilde \Omega(\omega),\widehat \Omega^c(\omega) \mid \omega) \le \pi^\es_{\Lambda_n^p}(\widehat \Omega(\omega) \mid \widetilde \Omega^c(\omega),\omega) + \pi^\es_{\Lambda_n^p}(\widehat \Omega^c(\omega) \mid \widetilde \Omega(\omega),\omega)\,.
\end{align*}
The difference $\pi_{\Lambda_n^p}^\es(\widehat \Omega(\omega) \mid \omega) - \frac 12$ is seen to also be bounded by 
\begin{align*}
    \big|\pi^\es_{\Lambda_n^p}(\widehat \Omega(\omega) \mid \omega) - \pi_{\Lambda_n^p}(\widetilde \Omega(\omega) \mid \omega)\big| \le \pi^\es_{\Lambda_n^p}(\widetilde \Omega^c(\omega) ,\widehat \Omega(\omega) \mid \omega) + \pi^\es_{\Lambda_n^p}(\widetilde \Omega(\omega),\widehat \Omega^c(\omega) \mid \omega)\,.
\end{align*}
Thus it suffices to control these two quantities. They are shown to be small by identical arguments so let us just consider the latter, where we are conditioning on the largest component in $\omega$ being colored $+1$, and considering the probability on that event that the magnetization is non-positive. Let $\cC_1,\cC_2,...$ be the components of $\omega$ ordered in decreasing size, so that because $\omega\in \cE_{n,B_{r/2}(v)}^\theta$, necessarily $|\cC_1|\ge \theta k^{1-d} |\Lambda_n|$, and $|\cC_j|\le k^{d-1} n$ for all $j\ne 1$. Here, the additional factors of $k^{1-d}$ and $k^d$ come from the facts that the minimum volume of the largest component of $\omega(B_x)$ for a block $B_x$ is $k$, while the maximum volume of any component of $\omega(B_x)$ is $k^d$. The bound on the second and subsequent largest components in $\omega$ follows because all such components either have size at most $k$ or are confined to be fully contained in a cluster of $k$-bad blocks, which on $\mathcal E_{n,B_{r/2}(v)}^\theta$ has size at most $n/4k$.

Then since these clusters are colored independently, the probability that the magnetization is non-positive is at most
\begin{align*}
    \pi_{\Lambda_n^p}^\es \big(M(\sigma)\le 0 \mid \widetilde \Omega(\omega), \omega\big)\le \pi_{\Lambda_n^p}^\es\Big(M\Big(\sigma\Big(\bigcup_{j\ge 2} \cC_j\Big)\Big) < - \theta k^{1-d} |\Lambda_n| \,\, \Big\vert \,\, \omega\Big)\,.
\end{align*}
Notice that this magnetization can be expressed as the sum of independent random variables $Z_j$ that take values $\pm |\cC_j|$ with probability $\frac 12$-$\frac 12$. By Hoeffding's inequality, this is at most
\begin{align*}
    \mathbb P\Big(\sum_j Z_j < -\theta k^{1-d} |\Lambda_n|/2\Big) \le \exp \Big( - \frac{\theta^2 k^{2-2d} |\Lambda_n|^2}{n^d |\cC_2|}\Big) \le C\exp( - \theta^2 n^{d-1}/C)\,,
\end{align*}
which for $d\ge 2$ is at most $C\exp( - n/C)$. 
\end{proof}

\begin{proof}[\textbf{\emph{Proof of Proposition~\ref{prop:tilde-hat-comparison}}}]
By using the identity coupling for the random-cluster draws from $\pi_{\Lambda_n^p}^\rc$, one easily obtains the bound 
\begin{align*}
    \|\widetilde \pi_{\Lambda_n^p} - \widehat \pi_{\Lambda_n^p}\|_\tv \le \pi_{\Lambda_n^p}^{\rc}\big((\cE_{n}^\theta)^c\big) + \max_{\omega\in \cE_{n}^\theta}     \|\widetilde \pi_{\Lambda_n^p}^\es (\widetilde \sigma \in \cdot \mid \widetilde \omega = \omega) - \widehat\pi_{\Lambda_n^p}^\es (\widehat \sigma \in \cdot \mid \widehat \omega = \omega)\|_\tv\,.
\end{align*}
The desired bound then follows from Proposition~\ref{prop:E-m-theta-probability} together with Lemma~\ref{lem:tilde-hat-ising-coupling}. 
\end{proof}

\subsection{Proof of WSM within a phase}\label{subsec:proof-of-wsm}
We prove Theorem~\ref{thm:Ising-wsm-within-phase-intro}
using a combination of the above properties for the good and very good coarse-grained percolation processes for $\omega \sim \pi_{B_r^\one(v)}^\rc$ and $\omega' \sim \pi_{\Lambda_n^p}^{\rc}$. Recall the event $\mathcal E_{\textsc{vg}}$ and abusing notation slightly, say $(\omega,\omega')\in \cE_{\textsc{vg}}$ if $(\omega(B_r(v)), \omega'(B_r(v))$ have a $k$-very good separating surface in the annulus $B_{r}(v)\setminus B_{r/2}(v)$. Also recall the events from Definitions~\ref{def:E-m-A}--\ref{def:E-m-p}.

\begin{lemma}\label{lem:coupling-Ising-configurations}
Consider a pair of configurations $(\omega,\omega')$ satisfying $(\cE_{r,B_{r/2}(v)} \times \cE_{n,B_{r/2}(v)}^{\theta}) \cap \cE_{\textsc{vg}}$; let $\Gamma_k$ denote the outermost open separating $k$-surface of very good blocks $\zeta^{(k)}(\omega,\omega')$ in $B_{r}(v)\setminus B_{r/2}(v)$. If  $\omega(\Int(\Gamma)) = \omega'(\Int(\Gamma))$, then there exists a coupling of $$\sigma^{(r)}\sim \pi_{B_r^{\plus}(v)}^{\es} ( \cdot \mid \omega^{(r)} =\omega)\qquad \mbox{and}\qquad \sigma^{(n)}\sim \widetilde \pi_{\Lambda_n^p}^\es(\cdot \mid \widetilde \omega^{(n)} = \omega')$$ such that $\sigma^{(r)}_{B_{r/2}(v)} = \sigma^{(n)}_{B_{r/2}(v)}$ with probability one. 
\end{lemma}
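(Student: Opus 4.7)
The plan is to realize $\sigma^{(r)}$ and $\sigma^{(n)}$ on a common probability space through the Edwards--Sokal coloring procedure and to arrange, using the hypothesis $\omega|_D = \omega'|_D$ with $D := \Int(\Gamma) \cup \Gamma$, that the colorings agree on $B_{r/2}(v)$. Applying Observation~\ref{obs:connected-component-good-blocks} to the $k$-connected open set of good blocks $\Gamma_k$, the common restriction $\omega|_\Gamma = \omega'|_\Gamma$ has a unique cluster $\cG$ of size at least $k$, while all other clusters of $\omega|_\Gamma$ have size strictly less than $k$; write $\cG^D$ for the $\omega|_D$-component of $\cG$.

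I would first show that under both $\pi_{B_r^{\plus}(v)}^{\es}(\cdot \mid \omega^{(r)} = \omega)$ and $\widetilde{\pi}_{\Lambda_n^p}^{\es}(\cdot \mid \widetilde{\omega}^{(n)} = \omega')$, the whole component $\cG^D$ is deterministically colored $+1$. For the former: by $\cE_{r,B_{r/2}(v)}$, the $k$-good coarse-graining of $\omega$ contains an open $k$-path from $\partial B_{r/2}(v)^{(k)}$ to $\partial B_{r-k}(v)^{(k)}$ whose largest $\omega$-cluster connects to $\partial B_r(v)$; this path must traverse $\Gamma_k$, so Observation~\ref{obs:connected-component-good-blocks} forces its unique large cluster to contain $\cG$, and the $\plusone$ boundary condition then fixes the color to $+1$. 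For the latter: by $\cE_{n,B_{r/2}(v)}^\theta$, the single macroscopic cluster of $k$-good blocks of density at least $\theta$ contains $\Gamma_k$; the associated $\omega'$-cluster (again by Observation~\ref{obs:connected-component-good-blocks}) has size of order $\theta n^d/k^{d-1}$, exceeds all competitors (other good-block clusters yield clusters of size $O(nk^{d-1})$, and bad blocks form only subcritical $\star$-clusters by Corollary~\ref{cor:stoch-domination}), and contains $\cG$, so under the tie-broken coloring of $\widetilde{\pi}^{\es}$ it is the unique largest component and gets $+1$.

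The geometric crux is then to show that for every $u \in B_{r/2}(v) \setminus \cG^D$, the full $\omega$-component $\cC_u^\omega \subseteq B_r^{\plus}(v)$ is entirely contained in $D$ (and symmetrically $\cC_u^{\omega'} \subseteq D$), whence $\cC_u^\omega = \cC_u^{\omega'}$ by $\omega|_D = \omega'|_D$. Suppose not: then some $\omega$-path from $u$ exits $\Gamma$ into $\Ext(\Gamma)\setminus\Gamma$, and its last vertex $w$ in $\Gamma$ lies in some $\omega|_\Gamma$-cluster $\cS$. If $\cS = \cG$ then $u \in \cG^D$, a contradiction. Otherwise $|\cS| < k$, so any two vertices of $\cS$ lie at $\ell_\infty$-distance strictly less than $k$; but $\cS$ contains both $w$ (which is $\omega$-adjacent to $\Ext(\Gamma) \setminus \Gamma$) and a vertex of $\Gamma$ adjacent to $\Int(\Gamma)$ (in order for $\cS$ to reach $u$ through $D$). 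Since $\Gamma$ is a union of $\ell_\infty$-balls of radius $k$ forming the outermost separating surface (per Remark~\ref{rem:outermost-separating-surface}), its $\Int$-adjacent and $\Ext$-adjacent boundary vertices lie at $\ell_\infty$-distance at least $k$, which contradicts the diameter bound on $\cS$.

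Granted the above, the coupling is the identity coupling on the Edwards--Sokal colors: assign $+1$ to $\cG^D$ in both measures, and for every other equivalence class of the common partition of $B_{r/2}(v)$ induced by the non-$\cG^D$ components, reuse the same uniform $\{\pm1\}$ bit; components not intersecting $B_{r/2}(v)$ (including those extending differently into $\Ext(\Gamma) \setminus \Gamma$ under $\omega$ versus $\omega'$) are colored independently in the two measures. This yields $\sigma^{(r)}_{B_{r/2}(v)} = \sigma^{(n)}_{B_{r/2}(v)}$ almost surely. I expect the principal obstacle to be the thickness lower bound in the third paragraph: one must verify carefully, from the explicit construction of the outermost separating surface of very-good blocks, that $\Gamma$ is genuinely $k$-thick between its $\Int$- and $\Ext$-facing boundaries, so that no small $\omega|_\Gamma$-cluster can span it.
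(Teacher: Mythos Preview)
Your approach is essentially the same as the paper's: identify the unique large $\omega|_\Gamma$-cluster, show it receives $+1$ in both colorings via the events $\cE_{r,B_{r/2}(v)}$ and $\cE_{n,B_{r/2}(v)}^\theta$, argue that all other components meeting $B_{r/2}(v)$ are trapped in $D$ (hence agree under $\omega,\omega'$), and couple the remaining uniform colorings identically. Your thickness concern is precisely what Lemma~\ref{lem:separating-surface-disconnects-information} provides---its proof in the appendix shows $d(\Lambda_m \setminus (\Gamma \cup \Ext(\Gamma)), \Ext(\Gamma)) \ge 2k$, so no $\omega|_\Gamma$-cluster of size $<k$ can touch both the $\Int$- and $\Ext$-facing sides of $\Gamma$---and the paper simply cites that lemma rather than re-deriving it; one small fix to your argument is to take $w$ at the \emph{first} exit of the path from $D$, so that the preceding maximal $\Gamma$-segment is guaranteed to begin at a vertex adjacent to $\Int(\Gamma)$.
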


\begin{proof}
We first show that under the conditions of the lemma, the two configurations $\omega,\omega'$ satisfy 
\begin{enumerate}
    \item Their induced component structures agree on $B_{r/2}(v)$, and 
    \item A component in $\omega(B_{r/2}(v))$ is connected to $\partial B_{r}(v)$ in $\omega$, if and only if that component is a subset of the largest component of $\omega'$. 
\end{enumerate}
The fact that their induced component structures agree on $B_{r/2}(v)$ is immediate from the facts that $\Int(\Gamma)\supset B_{r/2}(v)$ on the event $\cE_{\textsc{vg}}$, and Lemma~\ref{lem:separating-surface-disconnects-information}. 

For item (2), it suffices for us to show that on one hand, the largest component of $\omega(\Gamma)$ is connected to $\partial B_r(v)$ in $\omega$, and on the other hand, the largest component of $\omega'(\Gamma)$ is a subset of the largest component of $\omega'$. This is enough since all components of $\omega(\Gamma)$ or $\omega'(\Gamma)$ besides the largest one have size at most $k$, and since all of $\Gamma$ is very good, the largest components of $\omega(\Gamma)$ and $\omega'(\Gamma)$ coincide. 

Consider first the union of $\Gamma$ and the portion of the path of $k$-good blocks connecting $\partial B_{r/2}(v)$ to $\partial B_{r}(v)$ exterior to $\Gamma$ in $\omega$. This must be a $k$-good connected component by definition of $\Gamma_k$ being a $k$-separating surface. Since this includes a block whose side coincides with $\partial B_r(v)$, the large component (of size at least $k$) of $\omega(\Gamma)$ is connected to $\partial B_{r}(v)$ in $\omega$. 
Next, by Definition~\ref{def:E-m-p}, on $\cE_{n,B_{r/2}(v)}^\theta$, the largest component of $\omega'$ coincides with its largest component in the path connecting $\partial B_{r/2}^{(k)}(v)$ to $\partial \Lambda_{n}^{(k)}$. Reasoning as above, this implies that the largest component of $\omega'$ also coincides with the largest component of $\omega'(\Gamma)$, as claimed.  

We now construct a coupling of the corresponding Ising configurations given a pair $(\omega,\omega')$ satisfying (1)--(2) above. Recall that via the Edwards--Sokal coupling, $\pi^\es_{B_r^{\plus}(v)}$ is obtained by drawing a sample $\omega^{(r)} \sim \pi^\rc_{B_r^\one(v)}$, setting the state of the component of the boundary to be $+1$, and coloring all other components independently uniformly among $\{\pm 1\}$. The measure $\widetilde \pi^\es_{\Lambda_n^p}$ is similarly obtained by drawing a sample $\widetilde \omega^{(n)} \sim \pi^\rc_{\Lambda_n^\one}$ and setting its largest component to be $+1$ (and coloring all other components independently uniformly among $\{\pm 1\}$). By item (2), we have 
\begin{align*}
    \cC_{\partial B_r(v)}(\omega) \cap B_{r/2}(v)  = \cC_{1}(\omega') \cap B_{r/2}(v)\,,
\end{align*}
where $\cC_1(\omega')$ is the largest component of $\omega'$. 
Those sites are all colored $+1$ in both~$\sigma^{(r)}$ and~$\sigma^{(n)}$. Consider the remaining vertices, $B_{r/2}(v)\setminus  \cC_{\partial B_r(v)}(\omega)$. Those vertices have the same induced component structure given the configurations $\omega(B_{r}(v)\setminus B_{r/2}(v))$ and $\omega'(\Lambda_n\setminus B_{r/2}(v))$. Therefore, we can assign colors to the clusters of $\omega,\omega'$ in the following way. 
\begin{enumerate}
    \item Fix an enumeration of all the vertices in $\Lambda_n$, but such that the vertices in $B_{r/2}(v)$ are enumerated before any vertices in $\Lambda_n \setminus B_{r/2}(v)$ are. 
    \item For each $v\in \Lambda_n$, let $s_v$ be an i.i.d.\ uniform $\{\pm 1\}$ spin.
    \item For every cluster $\cC$ of $\omega$ (disjoint from $\cC_{\partial B_r(v)}$), for all $w\in \cC$, let $\sigma^{(r)}_w = s_{v_{\min}(\cC)}$ where $v_{\min}(\cC)$ is the smallest (in the enumeration) vertex in $\cC$. 
    \item For every cluster $\cC'$ of $\omega'$ (disjoint from $\cC_1(\omega')$), for all $w\in \cC'$, let $\sigma^{(n)}_w = s_{v_{\min}(\cC')}$ where $v_{\min}(\cC')$ is the smallest (in the enumeration) vertex in $\cC'$. 
\end{enumerate}
Since every cluster that intersects $B_{r/2}(v)$ in either of $\omega,\omega'$ has its smallest vertex in $B_r(v)$, the color assignments will be coupled such that all vertices in $B_{r/2}(v)$ get the same colors under $\sigma^{(r)}$ and $\sigma^{(n)}$. 
\end{proof}

Finally, we are able to prove Theorem~\ref{thm:Ising-wsm-within-phase-intro}
from the introduction, which claims that the Ising model satisfies WSM
within a phase at all low temperatures.  We recall that, combined with the
results of Section~\ref{sec:relaxation-within-phase}, this concludes the proof of our main result, Theorem~\ref{thm:Ising-main}.

\begin{proof}[{\textbf{\emph{Proof of Theorem~\ref{thm:Ising-wsm-within-phase-intro}}}}]
We will construct couplings between the two Ising distributions, $\pi_{B_{r}^{\plus}(v)}$ and $\widetilde \pi_{\Lambda_n^p}$ such that the two agree on $B_{r/2}(v)$ except with probability $Ce^{ - r/C}$. This suffices by a triangle inequality together with Lemma~\ref{lem:tilde-hat-ising-coupling} and the discussion preceding it in Remark~\ref{rem:pi-hat-edwards-sokal}. 

We can upper bound, under any coupling of $\omega\sim \pi_{B_r^\one(v)}^\rc$ and $\omega'\sim \pi_{\Lambda_n^p}^\rc$, 
% from Lemma~\ref{lem:coupling-Ising-configurations} (more precisely, first exposing $\omega'(\Lambda_n^p \setminus B_r(v))$ then using that coupling on $B_r(v)$),  
\begin{align*}
    \|\pi_{B_r^{\plus}(v)} & (\sigma (B_{r/2}(v))\in \cdot)- \widetilde \pi_{\Lambda_n^p}(\sigma_{B_{r/2}(v)} \in \cdot) \|_\tv \\
    & \le \mathbb P \big((\omega,\omega') \notin (\cE_{r,B_{r/2}(v)} \times \cE_{n,B_{r/2}(v)}^\theta) \cap \cE_{\textsc{vg}}\big)   \\ 
    &  \quad + \mathbb E \big[\|\pi_{B_r^{\plus}(v)}^\es(\sigma_{B_{r/2}(v)}\in \cdot\mid \omega) - \widetilde \pi_{\Lambda_n^p}(\sigma_{B_{r/2}(v)}\in \cdot \mid \omega')\|_\tv \mid (\cE_{r,B_{r/2}(v)} \times \cE_{n,B_{r/2}(v)}^\theta) \cap \cE_{\textsc{vg}} \big]\,.
\end{align*}
Using the monotone coupling given to us by Lemma~\ref{lem:E-very-good-probability}, on the event $\cE_{\textsc{vg}}$ we have $\omega(\Int(\Gamma)) = \omega'(\Int(\Gamma))$ and we can apply the coupling of Ising configurations given by Lemma~\ref{lem:coupling-Ising-configurations} to see that the second term on the right-hand side above is zero. It therefore suffices for us to bound the probability of $(\cE_{r,B_{r/2}(v)} \times \cE_{n,B_{r/2}(v)}^\theta) \cap \cE_{\textsc{vg}})^c$ under the coupling of Lemma~\ref{lem:E-very-good-probability}. This bound follows a union bound  combining Proposition~\ref{prop:E-m-probability} applied at $m = r$, Proposition~\ref{prop:E-m-theta-probability} applied at $m=n$, and Lemma~\ref{lem:E-very-good-probability}. 
\end{proof}

\section{Applications to other geometries: random regular graphs}\label{sec:random-graphs}
In this section, we demonstrate how the proof approach of the paper can be applied to more general geometries to establish fast mixing of the low-temperature Ising dynamics from a random initialization. We demonstrate this generalization in the well-studied setting of random regular graphs, and prove Theorem~\ref{thm:random-graph-mixing} stated in the introduction.  

% \subsection{The Ising model on random regular graphs}
Throughout this section, let $\Delta \ge 3$ and let $\Prrg$ be the uniform distribution over random $\Delta$-regular graphs on $N$ vertices. 
In this section, we will use $\cG$ to denote a $\Delta$-regular graph drawn uniformly according to $\Prrg$. Overwriting earlier notation, in this section we will use $d := \Delta -1$ as the branching factor of the graph, $\pi := \pi_{\cG}$ to denote the Ising measure on $\cG$ and the notation $X_{t}^{x_0}$ to denote the Ising Glauber dynamics on $\cG$ with initial configuration~$x_0$. Furthermore, we use $\widehat \Omega$ and $\widecheck \Omega$ to denote the configurations with non-negative and non-positive magnetizations respectively, $\widehat \pi = \widehat \pi_{\cG}$ and $\widecheck \pi = \widecheck \pi_{\cG}$ the corresponding conditional measures, and  $\widehat X_t^{x_0}, \widecheck X_t^{x_0}$ the corresponding restricted Ising Glauber dynamics.

\subsection{Proof ingredients}
Towards proving Theorem~\ref{thm:random-graph-mixing}, we need to generalize the notion of WSM within a phase to graphs beyond $(\mathbb Z/n\mathbb Z)^d$. For one thing, we need to restrict the radii we consider, which in Definition~\ref{def:wsm-within-a-phase} were at most $n = N^{1/d}$, so that they are smaller than the diameter of the underlying graph: in the context of a random $\Delta$-regular graph, we therefore only consider spatial mixing at scales $r$ smaller than (say) half the typical diameter $\log_d N$. The other modification we make is to assume a temperature dependence in the rate of WSM within a phase: this is because, in random $\Delta$-regular graphs, the competing volume term grows exponentially with~$r$, so the temperature dependence is needed to ensure that, at low enough temperatures, the spatial mixing decay dominates the volume growth. Naturally, this means that our results on random $\Delta$-regular graphs hold only at low enough temperatures, not all the way up to the critical point as in $(\mathbb Z/n\mathbb Z)^d$. 

Recall that $B_r^{\plus}(v)$ denotes the $\ell_\infty$-ball of radius $r$ about the vertex $v$, with $\plusone$ boundary condition on $\partial B_r(v) = \{w\in V(\cG)\setminus B_r(v): d(w,B_r(v)) = 1\}$.  

\begin{definition}\label{def:wsm-random-graph}
For a graph $\cG$ of maximal degree $\Delta = d+1$ on $N$ vertices, we say the Ising model has \emph{WSM within a phase at inverse temperatures $\beta>\beta_0$} if, for every $r<(1/2) \log_d N$ and every $v\in V(\cG)$,
\begin{align*}
    \|\pi_{B_r^{\plus}(v)} (\sigma_v\in \cdot) - \widehat \pi(\sigma_v\in \cdot) \|_\tv \le Ce^{ - \beta r/C}\,,
\end{align*}
for some constant $C= C(\Delta)>0$ (independent of $\beta$). 
\end{definition}

Given Definition~\ref{def:wsm-random-graph}, we will reduce the proof of  Theorem~\ref{thm:random-graph-mixing} to the following two lemmas, each an analogue of one earlier in the paper for the Ising model on $(\mathbb Z/n\mathbb Z)^d$. The first is the analogue of Theorem~\ref{thm:Ising-wsm-within-phase-intro} and establishes the WSM within a phase property at sufficiently low temperatures. 

\begin{lemma}\label{lem:wsm-within-phase-random-graph}
Fix $\Delta\ge 3$ and suppose $\beta_0$ is sufficiently large. With probability $1-o(1)$, $\cG\sim \Prrg$ is such that the Ising model on $\cG$ satisfies WSM within a phase at inverse temperatures $\beta>\beta_0$. 
\end{lemma}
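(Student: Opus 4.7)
The plan is to exploit three properties of random regular graphs at low temperatures: local tree-likeness, edge expansion, and a convergent Peierls series in the plus phase. I will first invoke standard results on $\cG \sim \Prrg$ to obtain, with probability $1-o(1)$, both (a) for every $v$ and every radius $r \le \tfrac12 \log_d N$, the ball $B_r(v)$ is a tree, and (b) edge expansion $|E(S,S^c)| \ge c_0|S|$ for all $S \subseteq V(\cG)$ with $|S| \le N/2$, where $c_0 = c_0(\Delta) > 0$. Conditioning on $\cG$ having both properties reduces the problem to a deterministic statement about the Ising Gibbs measure on such a $\cG$.

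The main analytic step is a Peierls-type estimate on the plus phase: for $\beta$ large enough,
\begin{align*}
   \widehat{\pi}\bigl(v \xleftrightarrow{-} \partial B_r(v)\bigr) \le C e^{-\beta r/C},
\end{align*}
where the event on the left is that $v$ and some vertex of $\partial B_r(v)$ lie in a common connected component of minus spins in $\sigma$. Such an event forces a connected set $S\ni v$ with $\sigma|_S = -1$ and $|S|\ge r$, so the standard Peierls bound applies: the number of connected subgraphs of size $k$ containing $v$ in a graph of maximum degree $\Delta$ is at most $(e\Delta)^k$, and for each such $S$ with $k \le N/2$, flipping $\sigma|_S$ from $-1$ to $+1$ gains a Gibbs weight factor $e^{\beta|\partial_e S|} \ge e^{\beta c_0 k}$ by edge expansion. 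The global conditioning in $\widehat{\pi}=\pi_\cG(\cdot\mid\widehat\Omega)$ contributes only the factor $\pi_\cG(\widehat\Omega)^{-1}$, which at low temperatures is close to $2$ by the magnetization large-deviation bound $\pi_\cG(|M(\sigma)|\le \epsilon N) \le e^{-\beta\,\Omega(N)}$, itself obtained via a volume-order Peierls/contour argument using edge expansion. Summing $\sum_{k\ge r}(e\Delta\,e^{-\beta c_0})^k$ and taking $\beta$ large enough that $e\Delta\,e^{-\beta c_0} < 1/2$ yields the claimed bound, and the same estimate holds (trivially) under $\pi_{B_r^{\plus}(v)}$ since its boundary is $+1$.

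The final step couples $\sigma\sim\widehat{\pi}$ and $\sigma'\sim \pi_{B_r^{\plus}(v)}$ using the spatial Markov property: conditioning on $\sigma|_{V\setminus B_r(v)}$ yields the law $\pi_{B_r^{\eta}(v)}$ on $B_r(v)$ with $\eta = \sigma|_{\partial B_r(v)}$, up to an $e^{-\Omega(N)}$ correction from the magnetization conditioning (negligible since $|B_r(v)| = O(d^r) = O(\sqrt N)$ is tiny compared with the typical plus-phase magnetization $\Omega(N)$). Using the monotone grand coupling I then generate $\sigma'$ from $\pi_{B_r^{\plus}(v)}$ coupled to $\sigma|_{B_r(v)}$ so that $\sigma' \ge \sigma$ on $B_r(v)$. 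A disagreement $\sigma_v \ne \sigma'_v$ then forces, via van den Berg--Maes-style disagreement percolation in the monotone coupling, a minus path in $\sigma$ from $v$ to $\partial B_r(v)$; its probability is controlled by Step~2, giving the desired TV bound on the single-site marginals.

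The main obstacle is the global conditioning on $\widehat\Omega$, which disrupts both the Peierls bookkeeping (through the partition-function ratio) and the domain Markov property that underlies the coupling argument. Both are resolved by the magnetization large-deviation estimate $\pi_\cG(|M(\sigma)|\le \epsilon N)\le e^{-\beta\,\Omega(N)}$, which on $\cG$ must be established directly from expansion (giving volume-order rather than surface-order decay, but still amply sufficient here). The requirement that $\beta$ be sufficiently large corresponds exactly to the Peierls convergence threshold $e\Delta\,e^{-\beta c_0} < 1$, which is the reason the lemma—and hence \emph{Theorem~\ref{thm:random-graph-mixing}}—covers only the low-temperature regime rather than all $\beta>\beta_c(\Delta)$.
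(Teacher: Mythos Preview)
Your overall strategy is essentially the paper's: reduce the global conditioning in $\widehat\pi$ to a boundary condition on $B_r(v)$ via the magnetization large-deviation bound, couple $\pi_{B_r^{\plus}(v)}$ to that boundary-condition measure, and control the coupling failure by a Peierls bound on minus clusters. Two points deserve attention, one minor and one substantive.

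\medskip
\noindent\textbf{Minor (false but unused).} Your claim (a), that with probability $1-o(1)$ every ball $B_r(v)$ with $r\le \tfrac12\log_d N$ is a tree, is false: random $\Delta$-regular graphs a.a.s.\ contain short cycles of length $O(1)$, so some balls contain cycles. The paper's Lemma~\ref{lem:random-graph-treelike} only gives the weaker $K$-tree-like property, and uses it solely for the local mixing bound (Lemma~\ref{lem:local-mixing-random-graph}), not for WSM within a phase. Since you never actually use (a), this does not break your argument, but it should be removed.

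\medskip
\noindent\textbf{Substantive gap in the coupling step.} You write that under ``the monotone grand coupling'' a disagreement $\sigma_v\ne\sigma'_v$ ``forces, via van den Berg--Maes-style disagreement percolation, a minus path in $\sigma$ from $v$ to $\partial B_r(v)$''. Neither ingredient delivers this as stated. The grand coupling (via Glauber dynamics run to stationarity) gives $\sigma'\ge\sigma$ but does \emph{not} guarantee that disagreements form a connected set touching the boundary: disagreements can appear, propagate, and then become disconnected from $\partial B_r$ as earlier sites re-agree. And the van den Berg--Maes bound controls disagreement clusters by an abstract site percolation with parameter equal to the maximal single-site total-variation distance; it does not identify the disagreement set with a minus cluster of $\sigma$.

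What is actually needed is an \emph{explicit revealing coupling} from $\partial B_r(v)$ inwards. The paper constructs one (see the proof of Lemma~\ref{lem:wsm-within-phase-random-graph}): it samples $\sigma^+$ and $\widehat\sigma$ \emph{independently} on successive vertices, exploring the minus component of $\widetilde\sigma:=\sigma^+\wedge\widehat\sigma$ from $\partial B_r(v)$, and then couples identically once both configurations show a $+1$ shell. Because the revealing is independent, the failure event is that the $\widetilde\sigma$-minus cluster reaches $v$, and this requires the additional Lemma~\ref{lem:sigma-tilde-exponential-tails} bounding clusters of the pointwise minimum of two \emph{independent} samples---proved by a generational decomposition that iterates the conditional Peierls bound of Lemma~\ref{lem:random-graph-clusters-exponential-tails}. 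Your sketch omits this step entirely.

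There is a way to realize your intended shortcut: run the same revealing procedure but sample $\sigma^+_{w}$ and $\widehat\sigma_{w}$ with the \emph{same} uniform (a monotone rather than independent step). Then $\sigma^+\ge\widehat\sigma$ on the explored set, so $\widetilde\sigma=\widehat\sigma$ there, and the failure event becomes exactly $\{v\xleftrightarrow{-}\partial B_r(v)\}$ in $\widehat\sigma$, controlled by your Step~2 alone. This is genuinely simpler than the paper's route (it eliminates Lemma~\ref{lem:sigma-tilde-exponential-tails}), but you must (i) specify this coupling explicitly rather than invoke the grand coupling, and (ii) verify that the adaptive revealing order---which depends on $\widehat\sigma$, not on $\sigma^+$---still yields the correct marginal $\pi_{B_r^{\plus}(v)}$ for $\sigma^+$. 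Both are routine, but neither follows from the phrases ``monotone grand coupling'' or ``van den Berg--Maes''.
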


The other lemma replaces the assumption~\eqref{eq:assumption-scale-m} on the rate of convergence to stationarity for Glauber dynamics at local scales with $\plusone$ boundary condition. Unlike in Section~\ref{sec:relaxation-within-phase}, here we can take an optimal, exponential rate of relaxation for the local-scale Markov chain with $\plusone$ boundary condition due to available sharp bounds on the log-Sobolev constant of the Ising dynamics on $\Delta$-regular trees with $\plusone$ boundary condition from~\cite{MSW-trees-bc}, combined with the tree-like nature of local balls in a random $\Delta$-regular graph. 

\begin{lemma}\label{lem:local-mixing-random-graph}
With probability $1-o(1)$, $\cG \sim \Prrg$ is such that, for every $\beta>0$, there exists $c_\star(\beta,\Delta)>0$ such that  for every $v\in V(\cG)$ and every $r\le (1/3)\log_d N$, for all $t>0$,
\begin{align*}
   \|\mathbb P(X_{B_r^{\plus}(v),t}^{\plus} (v) \in \cdot) - \pi_{B_r^{\plus}(v)} (\sigma_v\in \cdot) \|_\tv \le Cd^r e^{ - c_\star t}\,.
\end{align*}
\end{lemma}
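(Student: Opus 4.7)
The plan is to reduce the local relaxation on $B_r^{\plus}(v)\subset \cG$ to the sharp mixing bounds on the $\Delta$-regular tree with $\plusone$ boundary condition, by exploiting the locally tree-like structure of random $\Delta$-regular graphs.

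\textbf{Step 1 (tree-like balls).} First, I would establish that, with probability $1-o(1)$ over $\cG\sim\Prrg$, every ball $B_r(v)$ with $v\in V(\cG)$ and $r\le (1/3)\log_d N$ contains at most a constant number of excess edges (i.e.\ is a tree plus $O(1)$ back edges). This is a routine configuration-model moment calculation: the expected number of cycles of length $\ell\le 2r$ through a fixed $v$ is of order $d^{\ell}/N$, giving an expected total of $O(d^{2r}/N)=O(N^{-1/3})$ such cycles in $B_r(v)$. A $k$-th moment bound with $k$ a sufficiently large constant (using the near-Poisson structure of short cycles), together with a union bound over~$v$, then yields the claim.

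\textbf{Step 2 (MSW on trees).} On the event of Step 1, whenever $B_r(v)$ is a tree the measure $\pi_{B_r^{\plus}(v)}$ coincides with the Ising measure on the rooted ball of depth $r$ in the infinite $\Delta$-regular tree with $\plusone$ boundary on the depth-$(r+1)$ leaves. I would then invoke~\cite{MSW-trees-bc}, which asserts that the Glauber dynamics in this setting has log-Sobolev constant at least some $\alpha(\beta,\Delta)>0$, uniformly in~$r$, for every $\beta>0$. For the residual ``tree-plus-a-few-edges'' balls, I would either run a canonical-paths/block-dynamics comparison with the underlying tree cover (paying only a constant factor in the log-Sobolev constant since only $O(1)$ edges differ), or appeal to the non-amenable extension of MSW in~\cite{Bianchi}.

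\textbf{Step 3 (from log-Sobolev to TV at $\plusone$).} Starting the chain at $\plusone$, the standard entropy-decay consequence of the log-Sobolev inequality gives
\[
D\bigl(\mathbb P(X_{B_r^{\plus}(v),t}^{\plus}\in \cdot) \,\big\|\, \pi_{B_r^{\plus}(v)}\bigr) \le e^{-2\alpha t}\, \log \pi_{B_r^{\plus}(v)}(\plusone)^{-1}\,.
\]
The crude estimate $\log \pi_{B_r^{\plus}(v)}(\plusone)^{-1}\le 2\beta|E(B_r(v))|\le C(\beta)d^r$ together with Pinsker's inequality yields
\[
\|\mathbb P(X_{B_r^{\plus}(v),t}^{\plus}\in\cdot) - \pi_{B_r^{\plus}(v)}\|_\tv \le C(\beta)\sqrt{d^r}\,e^{-\alpha t}\le C\, d^r e^{-c_\star t}\,,
\]
with $c_\star=\alpha$. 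Since the TV of the marginal at $v$ is bounded by the full TV, the claim of the lemma follows.

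\textbf{Main obstacle.} The hard part will be Step~2 in the presence of excess edges: the MSW proof is tightly tied to the recursive tree structure and does not automatically extend to ball configurations containing short cycles. Absorbing $O(1)$ extra edges by a canonical-paths comparison should be routine in principle, but I would need to verify that the comparison constants do not degrade with~$r$; alternatively, the expander-friendly framework of~\cite{Bianchi} provides a clean black-box substitute at the cost of restricting to sufficiently large~$\beta$.
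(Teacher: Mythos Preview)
Your high-level strategy---locally tree-like balls, then the Martinelli--Sinclair--Weitz $\Omega(1)$ log-Sobolev bound on $\Delta$-regular trees with $\plusone$ boundary, then the standard entropy-to-TV conversion---is exactly the paper's, and your Steps~1 and~3 match it closely (the paper uses $\max_{x_0}\|\mathbb P(X_t^{x_0}\in\cdot)-\mu\|_\tv \le \sqrt{2}\,e^{-\alpha t}(\log(1/\mu_{\min}))^{1/2}$ together with $\mu_{\min}\ge e^{-\Omega(d^r)}$, which is your Pinsker step up to cosmetics).

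The gap is precisely the one you flag in Step~2, and neither of your proposed fixes closes it as stated. Canonical-paths comparison controls the spectral gap, not the log-Sobolev constant; and simply deleting the $K$ excess edges from $B_r(v)$ leaves a spanning tree in which the excess-edge endpoints have the wrong degree and the wrong number of $\plusone$ boundary neighbors, so the MSW bound for the \emph{full} $\Delta$-regular tree $T_r^{\plus}$ does not apply on the nose. The paper's device is to \emph{augment} rather than prune: form the disjoint union $H_{r,v}^{\plus}$ of $B_r^{\plus}(v)$ with $2K$ auxiliary full $\Delta$-regular trees with $\plusone$ boundary (one hung at each endpoint of each excess edge, of the appropriate depth), so that after deleting the $K$ excess edges and adding $2K$ edges to the auxiliary roots, the vertices of $H_{r,v}$ are in bijection with those of $T_r$ and the edge symmetric difference has size $3K$. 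A Radon--Nikodym (Holley--Stroock) comparison then gives $\alpha(H_{r,v}^{\plus}) \ge e^{-C\beta K}\alpha(T_r^{\plus}) = \Omega(1)$ by MSW; finally, since $H_{r,v}^{\plus}$ is a disjoint union, its Glauber dynamics is a product chain and $\alpha(B_r^{\plus}(v)) \ge \min\{\alpha(B_r^{\plus}(v)),\alpha(\cT_{e_k,i})\} = \alpha(H_{r,v}^{\plus})$. This augmentation-plus-product extraction is the missing idea. Your Bianchi alternative would work but, as you yourself note, only at sufficiently large $\beta$, whereas the lemma is stated for all $\beta>0$.
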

% 
% on the mixing times of boxes at local scales with $\plusone$ boundary conditions. This input is essentially given to us by the sharp bound on the mixing time of the Ising dynamics on $\Delta$-regular trees with $\plusone$ boundary conditions from~\cite{MSW-trees-bc}, together with the treelike nature of local balls in a random $\Delta$-regular graph. 

\subsection{Proof of Theorem~\ref{thm:random-graph-mixing} given Lemmas~\ref{lem:wsm-within-phase-random-graph}--\ref{lem:local-mixing-random-graph}}
We will for now defer the proofs of Lemmas~\ref{lem:wsm-within-phase-random-graph} and~\ref{lem:local-mixing-random-graph} to conclude the proof of Theorem~\ref{thm:random-graph-mixing}; this will go via the following proposition, which we view as an analogue of Theorems~\ref{thm:Ising-torus-restatement} and~\ref{thm:Ising-mixing-restricted-chain}. This organization is chosen to demonstrate that the proof of Theorem~\ref{thm:random-graph-mixing} is essentially the same as that of Theorem~\ref{thm:Ising-main}, once one has the appropriate analogous WSM within a phase property and local mixing time bound.  

\begin{proposition}\label{prop:mixing-rate-random-graph}
With probability $1-o(1)$, $\cG \sim \Prrg$ is such that, for all $\beta$ sufficiently large, the Ising Glauber dynamics on $\cG$ restricted to the plus phase satisfies 
\begin{align}\label{eq:mixing-rate-restricted-random-graph}
        \|\mathbb P(\widehat X_{t}^{\plus} \in \cdot) - \widehat \pi\|_\tv \le C N e^{ - (t\wedge \beta \log N)/C}\,,
\end{align}
and the (un-restricted) Ising Glauber dynamics on $\cG$ satisfies 
\begin{align}\label{eq:mixing-rate-unrestricted-random-graph}
    \|\mathbb P(X_{t}^{\nu^{\pmb{\pm}}} \in \cdot) - \pi\|_\tv \le C N e^{ - (t\wedge \beta \log N)/C}\,.
\end{align}
\end{proposition}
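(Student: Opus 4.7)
The plan is to mirror the strategy of Section~\ref{sec:relaxation-within-phase} for the torus, establishing~\eqref{eq:mixing-rate-restricted-random-graph} and~\eqref{eq:mixing-rate-unrestricted-random-graph} through a single-site relaxation estimate for the restricted chain $\widehat X_t^{\plus}$ that is then boosted to a total variation bound via monotonicity of the grand coupling and a union bound over vertices. The two inputs will be Lemma~\ref{lem:wsm-within-phase-random-graph} (WSM within a phase at rate $e^{-\beta r/C}$) and Lemma~\ref{lem:local-mixing-random-graph} (the exponential bound $C d^r e^{-c_\star t}$ for the local dynamics on $\plusone$-boundary balls). A preliminary step I will carry out first is the analogue of Lemma~\ref{lem:zero-magnetization-hitting-time}: with probability $1-o(1)$, $\cG$ is an expander, from which one obtains a volume-order large deviation estimate $\pi(|M(\sigma)|\le 1)\le e^{-cN}$ for some $c=c(\beta,\Delta)>0$ at large $\beta$. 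Following the proof of Lemma~\ref{lem:zero-magnetization-hitting-time} verbatim, this yields $\mathbb P(\widehat\tau^{\plus}\le t)\le \mathbb P(\widehat\tau^{\widehat\pi}\le t)\le C(t\vee 1) N e^{-cN}$, a quantity negligible on all timescales we consider.

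Next, mimicking Proposition~\ref{prop:single-site-relaxation} with the notation $\widehat P_t f_v(\plus) := \mathbb P(\widehat X_t^{\plus}(v)=+1)-\widehat\pi(\sigma_v=+1)$, I will establish
\begin{equation*}
   |\widehat P_t f_v(\plus)|\ \le\ \mathbb P(\widehat\tau^{\widehat\pi}\le t)\ +\ C e^{-\beta r/C}\ +\ C d^r e^{-c_\star t}
\end{equation*}
for any $r\le (1/3)\log_d N$. Here the lower bound comes from the grand coupling and the monotonicity analogue of Claim~\ref{clm:monotonicity-relations}, and the upper bound inserts the local chain on $B_r^{\plus}(v)$ between $\widehat X_t^{\plus}$ and $\widehat\pi$, with the three error terms arising from the hitting-time coincidence, WSM within a phase at radius $r$, and local mixing. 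To get a rate of order $t/C$, I will choose $r = \alpha c_\star t/\log d$ for a small constant $\alpha$, making the third term $e^{-(1-\alpha)c_\star t}$; for $\beta$ sufficiently large, the second term becomes $e^{-\Omega(\beta t)}$. The cap $r\le (1/3)\log_d N$ forces $t\lesssim \log N$, which is precisely captured by the $t\wedge \beta\log N$ inside the exponent in the statement.

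The boost to total variation then proceeds exactly as in the proof of Theorem~\ref{thm:Ising-mixing-restricted-chain}: the grand coupling, combined with the monotonicity analogue of Claim~\ref{clm:monotonicity-relations}, lets us write $\|\mathbb P(\widehat X_t^{\plus}\in\cdot)-\widehat\pi\|_\tv \le \sum_{v\in V(\cG)} \widehat P_t f_v(\plus) + O(N\,\mathbb P(\widehat\tau^{\widehat\pi}\le t))$, from which~\eqref{eq:mixing-rate-restricted-random-graph} follows. The bound~\eqref{eq:mixing-rate-unrestricted-random-graph} on the unrestricted chain starting from $\nu^{\pmb{\pm}}$ then follows from~\eqref{eq:mixing-rate-restricted-random-graph} via the triangle inequality and spin-flip symmetry, since $\pi$ is within $O(e^{-cN})$ of $\tfrac12(\widehat\pi+\widecheck\pi)$ in total variation. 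This step is a direct transcription of the proof of Theorem~\ref{thm:Ising-torus-restatement}, with the surface-order $e^{-n^{d-1}/C}$ error terms replaced by volume-order $e^{-cN}$ terms (which are absorbed into the main bound since $\log N$ dominates the regime of interest).

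The genuine new issue, and the only place where the random regular graph setting differs meaningfully from $\mathbb Z^d$, is the need for the WSM rate $e^{-\beta r/C}$ to dominate the exponential volume growth $d^r$ appearing in the local mixing bound: on $\mathbb Z^d$ the volume grew only polynomially and so no temperature restriction was needed, whereas on a graph of exponential volume growth the two must be compared. This is exactly why Lemma~\ref{lem:wsm-within-phase-random-graph} is formulated with explicit $\beta$-dependence in its rate, and why the conclusion requires $\beta$ sufficiently large rather than merely $\beta>\beta_c(\Delta)$; once this balance is secured, every other step is a straightforward adaptation of the corresponding step in Section~\ref{sec:relaxation-within-phase}.
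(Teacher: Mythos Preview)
Your proposal is correct and follows essentially the same approach as the paper: establish the hitting-time bound via expansion and the magnetization large deviation~\eqref{eq:magnetization-ld-random-graph} (the paper's Lemma~\ref{lem:zero-magnetization-hitting-time-random-graph}), derive a single-site relaxation estimate by inserting the local $\plusone$-boundary dynamics and applying the WSM and local-mixing inputs, boost to total variation via monotonicity and a union bound over vertices, and finally pass to the unrestricted chain by spin-flip symmetry. The only cosmetic difference is your choice of radius $r = \alpha c_\star t/\log d$ versus the paper's $r = t/\beta \wedge (1/3)\log_d N$; the paper's choice makes the WSM term immediately $e^{-t/C}$ and yields the cap at $t \approx (\beta/3)\log_d N$ directly matching the $t\wedge\beta\log N$ in the statement, whereas your choice balances the terms equally well but gives a cap at $t\approx \log N/c_\star$, with the discrepancy absorbed into the (possibly $\beta$-dependent) constant~$C$.
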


\begin{remark}
Notice that this exponential decay fits directly into the framework of proofs from Section~\ref{sec:relaxation-within-phase} as follows. Lemma~\ref{lem:local-mixing-random-graph} ensures that we can take $f(m)$ in~\eqref{eq:assumption-scale-m} to be constant. With that choice, the function~$g(t)$ from Section~\ref{sec:relaxation-within-phase} would be proportional to~$t$, yielding the above exponential decay. The maximum with~$n=N^{1/d}$ in Section~\ref{sec:relaxation-within-phase} is replaced by a maximum with $\beta \log N$, as the radius of a random graph on $N$ vertices is logarithmic. 
\end{remark}

\subsubsection*{Hitting time of zero magnetization}
We begin by establishing the analogues of Claim~\ref{clm:monotonicity-relations} and Lemma~\ref{lem:zero-magnetization-hitting-time}, to circumvent any non-monotonicity issues that may arise in the analysis of the dynamics. We first recall that the low-temperature Ising model on a random $\Delta$-regular graph has a magnetization bottleneck separating the plus and minus phases. 
Recall that $M(\sigma) = \sum_{v\in V(\cG)} \sigma_v$ is the total magnetization of the configuration $\sigma$. The following result is implicit in the work~\cite{CHK19}, and for completeness we include a proof in Appendix~\ref{app:random-graph-magnetization-ld}:
with probability $1-o(1)$, $\cG\sim \Prrg$ is such that for all $\beta$ sufficiently large, there exists $\epsilon(\beta)>0$ such that
\begin{align}\label{eq:magnetization-ld-random-graph}
    \pi(N^{-1} |M(\sigma)|\notin [-\epsilon,\epsilon]) \le Ce^{ - N/C}\,.
\end{align}

We can now deduce that the hitting time to $M(\sigma) = 0$ for the chains under consideration is typically exponentially large. For the restricted Glauber dynamics $\widehat X_{t}^{x_0}$, let $\widehat \tau^{x_0}$ denote the hitting time of $\partial \widehat \Omega$. 

\begin{lemma}\label{lem:zero-magnetization-hitting-time-random-graph}
For every $\cG$, for every $0\le t\le \widehat \tau^{\widehat \pi}$, we have $\widehat X_{t}^{\widehat \pi} = X_{t}^{\widehat \pi}\le X_{t}^{\plus} = \widehat X_{t}^{\plus}$ under the grand coupling.
Moreover, for every $\beta$ large and every $\cG$ such that~\eqref{eq:magnetization-ld-random-graph} holds, for all $t\ge 0$,  $\mathbb P(\widehat \tau^{\widehat \pi}\le t) \le C(t\vee 1)e^{ - N/C}$. 
\end{lemma}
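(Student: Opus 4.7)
The plan is to mirror the proofs of Claim~\ref{clm:monotonicity-relations} and Lemma~\ref{lem:zero-magnetization-hitting-time} verbatim, with the magnetization large deviation estimate~\eqref{eq:magnetization-ld-random-graph} playing the role that the surface-order large deviation bound~\eqref{eq:surface-order-LDP} played in the $\mathbb Z^d$ setting.

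For the first assertion (the two equalities and the middle inequality), I would begin by analyzing when the restricted and unrestricted chains driven by the same clocks and uniforms can diverge. The only mechanism is a ``rejection event'': an attempted $+1 \to -1$ flip that would push the magnetization from $1$ down to $-1$, and such a rejection requires the current configuration to lie in $\partial \widehat \Omega$. Since for $t \le \widehat \tau^{\widehat \pi}$ the trajectory $(\widehat X_s^{\widehat \pi})_{s<t}$ has not yet entered $\partial \widehat \Omega$, no rejection has taken place, which gives $\widehat X_t^{\widehat \pi} = X_t^{\widehat \pi}$. The middle inequality $X_t^{\widehat \pi} \le X_t^{\plus}$ is then immediate from monotonicity of the grand coupling, since any $\widehat \pi$-distributed initial configuration is coordinatewise $\le \plus$. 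For the final equality $X_t^{\plus} = \widehat X_t^{\plus}$, I would chain the previous relations to deduce $M(X_t^{\plus}) \ge M(X_t^{\widehat \pi}) = M(\widehat X_t^{\widehat \pi}) \ge 2$ for $t < \widehat \tau^{\widehat \pi}$, so no rejection can be triggered along the $\widehat X^{\plus}$-trajectory on this interval either.

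For the tail bound on $\widehat \tau^{\widehat \pi}$, I would exploit that $\widehat \pi$ is stationary for $\widehat X$, combined with a union bound over the rings of the Poisson process of total rate $N$ driving the dynamics. With probability $\ge 1 - Ce^{-N/C}$ there are at most $O(N(t \vee 1))$ rings in $[0,t]$; at each such fixed ring time the configuration is distributed as $\widehat \pi$, so the expected number of rings at which the chain sits in $\partial \widehat \Omega$ is at most $O(N(t \vee 1))\,\widehat \pi(\partial \widehat \Omega)$. Using $\widehat \pi \le 2\pi$ together with the inclusion $\partial \widehat \Omega \subset \{|M(\sigma)| \le 1\}$, the large deviation bound~\eqref{eq:magnetization-ld-random-graph} yields $\widehat \pi(\partial \widehat \Omega) \le Ce^{-N/C}$, and absorbing the prefactor of $N$ into the exponent gives the claimed estimate.

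There is no substantive new obstacle here: the only input specific to random regular graphs is the exponential bound~\eqref{eq:magnetization-ld-random-graph}, which is the sole place where the hypothesis of sufficiently large $\beta$ enters this lemma and which is established separately in Appendix~\ref{app:random-graph-magnetization-ld}. Given that input, the argument follows step-by-step from its $\mathbb Z^d$ counterpart.
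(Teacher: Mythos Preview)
Your proposal is correct and follows essentially the same approach as the paper: the first assertion is deduced from the rejection mechanism of the restricted chain together with monotonicity of the grand coupling (the paper states this as an immediate observation, while you spell out the chaining argument for $X_t^{\plus}=\widehat X_t^{\plus}$), and the tail bound is obtained by the same Poisson-ring union bound combined with stationarity of $\widehat\pi$ and the estimate~\eqref{eq:magnetization-ld-random-graph}. The only cosmetic difference is that the paper takes $N^2(t\vee 1)$ as the high-probability cap on the number of rings rather than $O(N(t\vee 1))$, which is immaterial after absorbing polynomial prefactors into the exponential.
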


\begin{proof}
The first claim follows immediately from the grand coupling of the Ising Glauber dynamics and the restricted Glauber dynamics from different initializations. To prove the second claim, we reason exactly as in the proof of Lemma~\ref{lem:zero-magnetization-hitting-time}. Namely, either the number of clock rings by time $t$ is more than $N^2 (t\vee 1)$ (this has probability $Ce^{ - N/C}$), or there are at most $N^2 (t\vee 1)$ many clock rings and we can take a union bound over these rings to find that 
$$\mathbb P(\widehat \tau^{\widehat \pi}\le t) \le C e^{ - N/C} + N^2 (t\vee 1) \widehat \pi (\partial \widehat \Omega)\le C'e^{ - N/C'}\,,$$
where the last inequality used~\eqref{eq:magnetization-ld-random-graph}  to bound $\widehat \pi(\partial \widehat \Omega)$.
\end{proof}

We are now in a position to prove Proposition~\ref{prop:mixing-rate-random-graph} and in turn conclude Theorem~\ref{thm:random-graph-mixing}. 

\begin{proof}[\textbf{\emph{Proof of Proposition~\ref{prop:mixing-rate-random-graph}}}]
Our proof of Proposition~\ref{prop:mixing-rate-random-graph} will follow the proofs of Theorems~\ref{thm:Ising-torus-restatement}--\ref{thm:Ising-mixing-restricted-chain} very closely. Indeed, essentially all the changes in this proof are simply due to the differences in the radius and the non-uniformity of balls in the random graph; however, we include the details for completeness.  

To begin, fix $\beta>0$ sufficiently large, $c_\star$ sufficiently small per Lemmas~\ref{lem:wsm-within-phase-random-graph}--\ref{lem:local-mixing-random-graph}  and let $\Gamma$ denote the set of simple $\Delta$-regular graphs on $N$ vertices such that the properties in Lemmas~\ref{lem:wsm-within-phase-random-graph}--\ref{lem:local-mixing-random-graph} and~\eqref{eq:magnetization-ld-random-graph} hold. By those results, we have 
\begin{align*}
    \Prrg(\cG \in \Gamma) = 1-o(1)\,.
\end{align*}
We will show that the claims of Proposition~\ref{prop:mixing-rate-random-graph} hold uniformly over all $\cG \in \Gamma$. To that end, fix any $\cG \in \Gamma$. 

We first consider the single site marginals of the restricted chain $\widehat X_t^{\plus}$, analogously to Proposition~\ref{prop:single-site-relaxation}. Fix $t>0$ and any $v\in V(\cG)$ and let 
\begin{align*}
    \widehat P_t f_v(\plus) := \mathbb P(\widehat X_{t}^{\plus}(v) = +1) - \widehat\pi(\sigma_v = +1)\,.
\end{align*}
For any $r = r(t)\le \frac{1}{3}\log_d N$, by monotonicity of the Glauber dynamics we can write
\begin{align*}
    \widehat P_t f_v(\plus) \le \mathbb P(\widehat X_{t}^{\plus}(v) = +1) - \mathbb P(X_{t}^{\plus}(v) = +1) + \mathbb P(X_{B_r^{\plus}(v),t}^{\plus}(v) = +1) - \widehat \pi(\sigma_v = +1)\,,
\end{align*}
where we recall that $X_{B_r^{\plus}(v)}^{\plus}$ is the Glauber dynamics on $B_r(v)$ with $\plusone$ boundary condition. By Lemma~\ref{lem:zero-magnetization-hitting-time-random-graph}, the difference of the first two terms is at most $\mathbb P(\widehat \tau^{\widehat \pi} \le t)$ so a triangle inequality gives
\begin{equation}
\begin{aligned}
    \widehat P_t f_v(\plus) \le \mathbb P(\widehat \tau^{\widehat \pi} \le t) & + |\pi_{B_r^{\plus}(v)}(\sigma_v = +1) - \widehat \pi(\sigma_v = +1)| \\ %\nonumber\\ 
    & + |\mathbb P(X_{B_r^{\plus}(v)}^{\plus}(v) = +1) - \pi_{B_r^{\plus}(v)}(\sigma_v = +1)|\,. \label{eqn:asnew1}
\end{aligned}
\end{equation}
The first term on the r.h.s.~of~\eqref{eqn:asnew1} is at most $Ce^{ - N/C}$ while $t\le e^{N/K}$ for a sufficiently large $K$, by Lemma~\ref{lem:zero-magnetization-hitting-time-random-graph}. The second term is exactly the quantity bounded by WSM within a phase, so that by Lemma~\ref{lem:wsm-within-phase-random-graph} it is at most $Ce^{ - \beta r/C}$. 
By Lemma~\ref{lem:local-mixing-random-graph}, the third term is at most $C d^r e^{ - c_\star t}$.
Putting these estimates together, and taking $r = t/\beta \wedge (1/3) \log_d N$, we find that for $\beta$ sufficiently large,
\begin{align}\label{eq:random-graph-Ptfv-bound}
    \widehat P_t f_v (\plus) \le C e^{ - t/C} \qquad \mbox{for all $t\le \frac{\beta}{3} \log_d N$}\,.
\end{align}
Let us now use this to derive the desired mixing time conclusions. We first consider the mixing time of the restricted chain. Bounding the total variation distance by the probability of disagreement under an optimal coupling, and proceeding as in the first two displays of the proof of Theorem~\ref{thm:Ising-mixing-restricted-chain}, we have 
\begin{align}
    \|\mathbb P(\widehat X_{t}^{\plus}\in \cdot) - \widehat \pi\|_\tv \le \mathbb P(X_t^{\plus} \ne X_t^{\widehat \pi}) + \mathbb P(\widehat \tau^{\widehat \pi}\le t)\,,  \label{eqn:asnew2}
\end{align}
where the latter probability is under the grand monotone coupling. By Lemma~\ref{lem:zero-magnetization-hitting-time-random-graph}, the second term on the on the r.h.s.\ of~\eqref{eqn:asnew2} is at most $Ce^{ - N/C}$ while $t \le e^{N/K}$. By monotonicity of the coupling, the first term is at most 
\begin{align}
    \sum_{v\in V(\cG)} \Big( \mathbb P(X_{t}^{\plus}(v) = +1) &  - \mathbb P(X_t^{\widehat \pi}(v) = +1)\Big) \nonumber \\
    & \le \sum_{v\in V(\cG)} \Big(\mathbb P(\widehat X_{t}^{\plus}(v) = +1)  - \widehat \pi(\sigma_v = +1) + 2\mathbb P(\widehat \tau^{\widehat \pi}\le t)\Big)\,. \label{eqn:asnew3}
\end{align}
The first difference in the parentheses in~\eqref{eqn:asnew3} is exactly $\widehat P_t f_v(\plus)$, while the last term is again bounded by Lemma~\ref{lem:zero-magnetization-hitting-time-random-graph}. Plugging all this back into~\eqref{eqn:asnew2}, we obtain 
\begin{align*}
    \|\mathbb P(\widehat X_{t}^{\plus} \in \cdot) - \widehat \pi\|_\tv \le  N \max_{v\in V(\cG)} \widehat P_t f_v({\plus}) + C N e^{ - N/C}\,. 
\end{align*}
Finally, by~\eqref{eq:random-graph-Ptfv-bound}, we obtain
\begin{align}\label{eq:random-graph-within-phase-mixing}
    \|\mathbb P(\widehat X_{t}^{\plus}\in \cdot) - \widehat \pi\|_\tv \le C N e^{ - t/C}\,,\qquad \mbox{for all $t\le  (\beta/3)\log_d N$},
\end{align}
completing the verification of the first part~\eqref{eq:mixing-rate-restricted-random-graph} of the proposition.

We now lift this to a bound on the distance to stationarity for the (un-restricted) Glauber dynamics on $\cG$ when initialized from $\nu^{\pmb{\pm}}$. Towards this, we write 
\begin{align*}
   \|\mathbb P(X_{t}^{\nu^{\pmb{\pm}}}\in \cdot) - \pi\|_\tv &  \le  \|\frac 12 \mathbb P(X_{t}^{\plus}\in \cdot) + \frac 12 \mathbb P(X_{t}^{\minus}\in \cdot) - \frac 12 \widehat \pi - \frac 12 \widecheck \pi\|_\tv + \|\pi - \frac 12 (\widehat \pi + \widecheck \pi)\|_\tv \\ 
   & \le \frac 12 \|\mathbb P(X_{t}^{\plus}\in \cdot) - \widehat \pi\|_\tv + \frac 12 \|\mathbb P(X_{t}^{\minus}\in \cdot) - \widecheck \pi\|_\tv  + \pi(\partial \widehat \Omega) + \pi(\partial \widecheck \Omega)\,.
\end{align*}
By~\eqref{eq:random-graph-within-phase-mixing}, each of the first two quantities is bounded by $C N e^{- t/C}$ while $t\le (\beta/3)\log_d N$. By~\eqref{eq:magnetization-ld-random-graph}, the third and fourth terms are at most $Ce^{ - N /C}$ and can thus be absorbed into the first and second terms. Altogether, this gives the desired bound on the total variation distance $\|\mathbb P(X_{t}^{\nu^{\pmb{\pm}}}\in \cdot) - \pi\|_\tv$, concluding the proof of the second part~\eqref{eq:mixing-rate-unrestricted-random-graph} of the proposition. 
\end{proof}

\begin{proof}[\textbf{\emph{Proof of Theorem~\ref{thm:random-graph-mixing}}}]
Theorem~\ref{thm:random-graph-mixing} is an immediate corollary of the second result of Proposition~\ref{prop:mixing-rate-random-graph}. Namely, in order to make the right-hand side of~\eqref{eq:mixing-rate-unrestricted-random-graph} less than $\epsilon$ for any $\epsilon = \epsilon_N>0$ that decays as $\Omega(1/\mbox{poly}(N))$, it suffices to take continuous time $t = O(\log N)$, which translates to $O(N\log N)$ in discrete time per~\eqref{eq:continuous-time-discrete-time-comparison}.   
\end{proof}

\subsection{Random graph preliminaries}
It now remains to establish Lemmas~\ref{lem:wsm-within-phase-random-graph}--\ref{lem:local-mixing-random-graph}. Towards this, we recall some preliminary facts about random regular graphs that we will need. 
% We defer the proofs of these random graph lemmas to Appendix~\ref{app:random-graph-facts} as they are standard in the random graph literature (see e.g.,~\cite{BollobasBook,FriezeBook}).

The first such fact is their locally tree-like nature. There are various ways of expressing this property; we shall use the following.

\begin{definition}\label{def:treelike}
We say that the ball $B_r(v)$ in a $\Delta$-regular graph $\cG$ is {\it $K$-tree-like\/} if the removal of at most $K$ edges makes it a tree (or forest). \end{definition}

The following lemma is easy to verify using the configuration model for random $\Delta$-regular graphs (see e.g.,~\cite[Fact 2.3]{BG20}).

\begin{lemma}\label{lem:random-graph-treelike}
There exists $K(\Delta)>0$ such that, with probability $1-o(1)$, $\cG \sim \Prrg$ is such that $B_r(v)$ is $K$-tree-like for all $v\in V(\cG)$ and all $r\le (1/3)\log_d N$. 
\end{lemma}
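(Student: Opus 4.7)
The plan is to work in the configuration model for random $\Delta$-regular graphs, in which each vertex carries $\Delta$ half-edges and the half-edges are matched via a uniformly random perfect matching. Since the probability that the resulting multigraph is simple is bounded away from zero as $N\to\infty$ for fixed $\Delta$, any event of probability $1-o(1)$ in the configuration model transfers to a $1-o(1)$ event under $\Prrg$. It also suffices to establish $K$-tree-likeness at the single scale $r_{\max}=\lfloor (1/3)\log_d N\rfloor$: if a set $E^\star$ of at most $K$ edges witnesses $K$-tree-likeness for $B_{r_{\max}}(v)$, then $E^\star\cap E(B_{r'}(v))$ witnesses $K$-tree-likeness for every smaller ball, since every subgraph of a forest is a forest.

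First I would fix a vertex $v$ and expose $B_{r_{\max}}(v)$ by breadth-first search in the configuration model, revealing one pairing at a time. Throughout the exposure the already-revealed vertex set $B$ satisfies $|B|\le C d^{r_{\max}}\le CN^{1/3}$, and in total at most $\Delta|B|=O(N^{1/3})$ pairings are exposed (since every pairing with both endpoints in $B$ is counted at most once, and each such pairing uses at least one half-edge in $B$). Call a revealed pairing a \emph{collision} if, at the moment of its exposure, both half-edges lie in the current $B$. Every non-collision pairing exposed during the BFS discovers a fresh vertex, hence contributes one edge to a spanning tree of $B_{r_{\max}}(v)$, so the cyclomatic number $|E(B_{r_{\max}}(v))|-|B_{r_{\max}}(v)|+1$ of the ball is exactly equal to the total number of collisions. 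It therefore suffices to bound the number of collisions.

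Conditional on the past exposures, the next pairing lands entirely inside the current $B$ with probability at most $\Delta|B|/(\Delta N-O(|B|))=O(N^{-2/3})$. Writing $X$ for the total number of collisions exposed while constructing $B_{r_{\max}}(v)$, a union bound over which $K+1$ of the $O(N^{1/3})$ pairings collided yields
\[
   \P\bigl(X\ge K+1\bigr) \le \binom{CN^{1/3}}{K+1}\bigl(CN^{-2/3}\bigr)^{K+1} \le C' N^{-(K+1)/3}\,.
\]
Choosing $K=K(\Delta)\ge 3$ and union bounding over the $N$ choices of $v$ then bounds the probability that some ball $B_{r_{\max}}(v)$ fails to be $K$-tree-like by $C'N^{1-(K+1)/3}=o(1)$, which together with the monotonicity of $K$-tree-likeness in $r$ concludes the argument.

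The only mild technical points are (a) the $(1+o(1))$ factor in the conditional collision probability coming from prior exposures depleting the half-edge pool, which is harmless because at most $O(N^{1/3})$ half-edges are drawn out of a pool of size $\Delta N$; and (b) the transfer from the configuration model to $\Prrg$, which is standard. Neither is a genuine obstacle, and the calculation is essentially a second-moment/union-bound argument. The threshold $(1/3)\log_d N$ appears as the largest scale at which the expected number of collisions is polynomially small, which is exactly what the final union bound over vertices needs.
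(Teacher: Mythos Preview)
Your proposal is correct and is exactly the configuration-model argument the paper has in mind: the paper does not spell out a proof at all, merely stating that the lemma ``is easy to verify using the configuration model'' and pointing to \cite[Fact~2.3]{BG20}. Your BFS collision count (cyclomatic number $=$ number of collisions, each occurring with conditional probability $O(N^{-2/3})$) together with the union bound for $K\ge 3$ constitutes precisely such a verification.
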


The second fact we use about random $\Delta$-regular graphs is that they are typically expanders. Specifically, we will use the following measure of edge expansion to establish the needed equilibrium estimates at sufficiently low temperatures. 

\begin{definition}\label{def:edge-expansion}
A graph $\cG$ has {\it $\zeta$-edge expansion\/} if $\min_{S\subset V(\cG): |S|\le N/2} \frac{|E(S,S^c)|}{|S|}\ge \zeta$. Here $E(S,S^c)$ denotes the set of edges of $E(\cG)$ with one endpoint in $S$ and one endpoint in $V(\cG)\setminus S$.
\end{definition}

In~\cite[Theorem 1]{Bollobas-isoperimetric} an explicit $\zeta(\Delta)$ was found such that the random $\Delta$-regular graph has $\zeta$-edge expansion with high probability. We only need the following weaker existential statement.

\begin{lemma}\label{lem:random-graph-expansion}
For every $\Delta$, there exists $\zeta(\Delta)>0$ such that, with probability $1-o(1)$, $\cG \sim \Prrg$ has $\zeta$-edge expansion.
\end{lemma}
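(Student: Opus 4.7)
The approach is a standard first-moment argument in the configuration model, which is asymptotically contiguous to $\Prrg$ once one conditions on simplicity (an event of probability bounded away from $0$). Thus it suffices to show the expansion property holds with probability $1-o(1)$ under the configuration model on $\Delta N$ half-edges. Let $X_{s,k}$ denote the number of vertex subsets $S \subset V(\cG)$ with $|S| = s$ and $|E(S, S^c)| = k$. The goal is to choose $\zeta = \zeta(\Delta)>0$ sufficiently small so that
\[
\sum_{s=1}^{\lfloor N/2 \rfloor} \sum_{k=0}^{\lfloor \zeta s \rfloor} \mathbb{E}[X_{s,k}] = o(1),
\]
after which Markov's inequality and a union bound yield the claim.

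The plan is to estimate $\mathbb{E}[X_{s,k}]$ directly in the configuration model. For a fixed subset $S$ of size $s$, it has $\Delta s$ incident half-edges; if $|E(S,S^c)| = k$, then exactly $k$ of these half-edges are matched to half-edges in $S^c$, and the remaining $\Delta s - k$ are matched in pairs inside $S$. Standard counting then gives
\[
\mathbb{P}(|E(S,S^c)| = k) \;=\; \binom{\Delta s}{k}\binom{\Delta(N-s)}{k}\, k!\,\frac{(\Delta s - k - 1)!!\,(\Delta(N-s) - k - 1)!!}{(\Delta N - 1)!!}.
\]
Multiplying by $\binom{N}{s}$ and applying Stirling's approximation (together with $(2m-1)!! = (2m)!/(2^m m!)$), one obtains, after simplification, a bound of the form
\[
\mathbb{E}[X_{s,k}] \;\le\; \Bigl(\tfrac{eN}{s}\Bigr)^s \Bigl(\tfrac{e \Delta s}{k}\Bigr)^{k} \Bigl(\tfrac{s}{N}\Bigr)^{(\Delta s - k)/2}\cdot (\text{lower-order terms}).
\]
Taking logarithms and using $k \le \zeta s$, one sees that the dominant $s$-dependence is governed by an exponent of the form $s\bigl[\, \log(N/s) - \tfrac{\Delta-\zeta}{2}\log(N/s) + O(\zeta \log(1/\zeta))\,\bigr]$, which is negative for all $1 \le s \le N/2$ provided $\zeta$ is small enough compared to $\Delta - 2 \ge 1$.

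The bulk of the work is to verify that the sum over $(s,k)$ is $o(1)$ uniformly in $s$. The tightest regime is small $s$ (where $\binom{N}{s}$ is large) and $s$ near $N/2$ (where $\log(N/s)$ is small); in both cases a routine calculation shows that choosing $\zeta(\Delta) > 0$ sufficiently small makes the exponent negative and linear in $s$, so the doubly-indexed sum converges geometrically. The main technical nuisance, rather than a genuine obstacle, is handling these two regimes and the linear number of values of $k$ at each $s$; this is standard and is precisely the calculation carried out (with explicit constants) in~\cite{Bollobas-isoperimetric}. Finally, conditioning on the configuration-model graph being simple changes probabilities by at most a constant factor, so the same $\zeta$ works under $\Prrg$.
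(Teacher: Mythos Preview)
The paper does not give its own proof of this lemma: it simply cites \cite[Theorem 1]{Bollobas-isoperimetric} for an explicit $\zeta(\Delta)$ and states the weaker existential version as a known fact. Your proposal sketches precisely the classical first-moment argument in the configuration model that underlies that reference, and at the end you yourself defer the detailed constants to the same citation; so your approach is correct and is essentially the proof behind the result the paper invokes.
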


\subsection{WSM within a phase at low temperatures}
In this section we prove Lemma~\ref{lem:wsm-within-phase-random-graph}, establishing that WSM within a phase holds for the Ising model on $\cG \sim \Prrg$ at sufficiently low temperatures. 

We begin by using Peierls--type arguments to establish exponential tail bounds on the size of all but one cluster: the largest cluster will therefore dictate which phase the configuration is in. Towards that, let us define a polymer formalism for the Ising model on~$\cG$. 

\begin{definition}\label{def:polymer-representation}
A \emph{polymer} $\gamma$ of the Ising model with $\plusone$ boundary condition (respectively, in the plus phase) is a (maximal) connected set of vertices all of whose spins are~$-1$, also known as a \emph{minus cluster}. Let $\partial_e \gamma$ denote the outer edge boundary of $\gamma$, i.e., $\partial_e \gamma:=\{e=(v,w)\in E(\cG): v\in \gamma, w\notin \gamma\}$. The \emph{weight} of such a polymer is $w(\gamma) = e^{ - \beta |\partial_e\gamma|}$. 

The polymer representation for the Ising model on $B_r^{\plus}(v)$
is defined as follows. Let $V^-(\sigma)$ be the set of vertices that have spin $-1$ in $\sigma$, and let $\Gamma(\sigma)$ be the set of (maximal) connected components of the subgraph induced by $V^-(\sigma)$. This gives a 1-to-1 correspondence between configurations $\sigma$ on $B_r(v)$ and collections of polymers $\Gamma(\sigma)$. With this, we can express the Ising measure~\eqref{eqn:gibbs} as  
\begin{align}\label{eq:pi-B-r-low-temp-rep}
    \pi_{B_r^{\plus}(v)}(\sigma)\,\propto \prod_{\gamma\in \Gamma(\sigma)} w(\gamma)\,.
\end{align}

Similarly the polymer representation for the Ising model in the plus-phase on $\cG$ gives a 1-to-1 correspondence between configurations on $V(\cG)$ in $\widehat \Omega$ and collections of polymers $\Gamma$ such that $\sum_{\gamma \in \Gamma} |\gamma|\le N/2$. With  this, we can express the plus-phase Ising measure as
\begin{align}\label{eq:pi-hat-low-temp-rep}
\widehat \pi (\sigma)\,\propto\prod_{\gamma \in \Gamma(\sigma)} w(\gamma) \qquad \mbox{for $\sigma \in \widehat \Omega$}\,.
\end{align}
Given a configuration $\sigma$ with polymer representation $\Gamma(\sigma)$, let $\gamma_w= \gamma_w(\sigma)$ be the polymer in $\Gamma(\sigma)$ containing $w$ if such exists, and let $\gamma_w$ be empty otherwise. By convention, if $\gamma_w = \emptyset$ then $\partial_e \gamma_w = \emptyset$. 
\end{definition}

\begin{lemma}\label{lem:random-graph-clusters-exponential-tails}
Suppose $\cG$ has $\zeta$-edge expansion for $\zeta>0$.  Then, for any $v\in V(\cG)$ and $r\le (1/2)\log_d N$, any $w\in B_r(v)$, and any fixed collection of polymers $\Gamma_*$ in $B_r(v)$ such that $d(w,\Gamma_*)>1$, 
\begin{align*}
    \pi_{B_r^{\plus}(v)}\big(|\partial_e \gamma_w|\ge \ell \mid \Gamma_* \subset \Gamma(\sigma)\big) \le Ce^{ - \beta \ell/C}\,.
\end{align*}
Likewise, for any $w\in V(\cG)$ and any fixed collection of polymers $\Gamma_*$ in $V(\cG)$ such that $|\Gamma_*|\le N/2$ and such that $d(w,\Gamma_*)>1$,
\begin{align*}
    \widehat \pi\big(|\partial_e \gamma_w|\ge \ell \mid \Gamma_* \subset \Gamma(\sigma)\big)\le Ce^{ - \beta \ell/C}\,.
\end{align*}
\end{lemma}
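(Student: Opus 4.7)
The plan is to apply a standard Peierls-style argument adapted to the polymer representation from Definition~\ref{def:polymer-representation}, using the edge-expansion hypothesis as the substitute for lattice isoperimetry. Both bounds in the lemma follow from exactly the same core estimate, differing only in the ambient measure and in where $\gamma_w$ is allowed to live.

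First I would establish the key weight ratio by a flipping bijection. For each fixed polymer $\gamma \ni w$ compatible with $\Gamma_*$ (and additionally with $\gamma\subset B_r(v)$ in the first statement), consider the map $\phi_\gamma$ that flips every vertex of $\gamma$ from $-1$ to $+1$. Since $d(w,\Gamma_*)>1$ by hypothesis, $\gamma$ is automatically non-adjacent to every $\gamma'\in\Gamma_*$, so deleting $\gamma$ preserves $\Gamma_*$ as a sub-collection of polymers of the image; for $\widehat\pi$ one also notes that flipping minuses to pluses only increases the magnetization, so the image remains in $\widehat\Omega$. Since distinct polymers contribute disjoint edge-boundaries to $|C(\sigma)|$, the weight ratio is
\[
    \frac{\pi(\sigma)}{\pi(\phi_\gamma(\sigma))} = e^{-\beta|\partial_e\gamma|}.
\]
Because $\phi_\gamma$ is injective into $\{\sigma':\Gamma_*\subset\Gamma(\sigma')\}$, this yields
\[
    \pi\bigl(\gamma_w=\gamma \,\bigm|\, \Gamma_*\subset\Gamma(\sigma)\bigr) \le e^{-\beta|\partial_e\gamma|},
\]
and a union bound over polymers of given boundary size gives
\[
    \pi\bigl(|\partial_e\gamma_w|\ge\ell \,\bigm|\, \Gamma_*\subset\Gamma(\sigma)\bigr) \le \sum_{s\ge\ell} N_s(w)\, e^{-\beta s},
\]
where $N_s(w)$ counts connected subgraphs of $\cG$ containing $w$ with edge boundary exactly $s$.

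To bound $N_s(w)$, I combine two ingredients: (i) a standard tree-enumeration argument gives that the number of connected subgraphs of a $\Delta$-regular graph containing a fixed vertex with $k$ vertices is at most $(e\Delta)^k$; and (ii) the $\zeta$-edge-expansion hypothesis forces $k \le |\partial_e\gamma|/\zeta$ whenever $|\gamma|\le N/2$. The size constraint is verified in each case: for $\pi_{B_r^{\plus}(v)}$, any polymer sits inside $B_r(v)$, which has $|B_r(v)|\le Cd^r \le CN^{1/2}\ll N/2$ since $r\le(1/2)\log_d N$; for $\widehat\pi$, the plus-phase restriction built into~\eqref{eq:pi-hat-low-temp-rep} already ensures $|\gamma|\le N/2$. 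Summing over $k$ from $s/\Delta$ to $s/\zeta$ gives $N_s(w)\le C_0^s$ for some $C_0=C_0(\Delta,\zeta)$.

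Plugging back in,
\[
    \pi\bigl(|\partial_e\gamma_w|\ge\ell \,\bigm|\, \Gamma_*\subset\Gamma(\sigma)\bigr) \le \sum_{s\ge\ell} e^{-(\beta-\log C_0)s},
\]
which, for $\beta$ large enough that $\beta-\log C_0\ge\beta/2$ (this is where the "$\beta$ sufficiently large" hypothesis of Lemma~\ref{lem:wsm-within-phase-random-graph} enters), is bounded by $Ce^{-\beta\ell/C}$. The symmetric bound for $\widehat\pi$ is identical once the size constraint is verified. The only subtle point is checking the validity of the flipping map under the conditioning, which is where the separation assumption $d(w,\Gamma_*)>1$ is used; the counting then enters through edge expansion in a purely black-box way.
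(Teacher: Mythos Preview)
Your proposal is correct and follows essentially the same Peierls argument as the paper: flip the polymer $\gamma_w$ to $+1$, record the energy gain $e^{-\beta|\partial_e\gamma_w|}$, bound the multiplicity by counting connected vertex sets containing $w$ of size at most $|\partial_e\gamma|/\zeta$ via edge expansion, and sum. The paper organizes the map over $\Omega_{*,w,l}$ rather than fixing $\gamma$ first, and uses the cruder count $d^{l/\zeta}$ in place of your $(e\Delta)^{k}$, but these are cosmetic differences; your explicit remark that convergence of the final sum needs $\beta$ large (implicit in the paper's bound $Ce^{-\beta\ell/C}$ with $C=C(\Delta)$) is a fair clarification.
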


\begin{proof}
We use a Peierls map together with the edge expansion properties guaranteed by Lemma~\ref{lem:random-graph-expansion} to argue the bounds of the lemma.  Let us begin by establishing the first bound of the lemma. For ease of notation, let $B^{\plus} = B_r^{\plus}(v)$. For each $l\ge \ell$, let $\Omega_{*,w,l}$ denote
the set of  configurations $\sigma$ having $\Gamma_* \subset \Gamma(\sigma)$ and $|\partial_e \gamma_w(\sigma)|=  l$.  Let $\Phi$ be the following map: for every $\sigma\in \Omega_{*,w,l}$, let $\Phi(\sigma)$ be the Ising configuration whose polymer representation is given by $\Gamma(\sigma)\setminus \gamma_w(\sigma)$. 
 Observe that for every $\sigma \in \Omega_{*,w,l}$, the configuration $\Phi(\sigma)$ has $\Gamma_* \subset \Gamma(\Phi(\sigma))$. 

On the one hand, the energy gain from the map $\Phi$ is $\beta l$, i.e., by~\eqref{eq:pi-B-r-low-temp-rep}, for every $\sigma \in \Omega_{*,w,l}$, 
\begin{align}\label{eq:peierls-energy-gain}
    \frac{\pi_{B^{\plus}}(\sigma)}{\pi_{B^{\plus}}(\Phi(\sigma))} = e^{ - \beta |\partial_e \gamma_w(\sigma)|}  = e^{ - \beta l}\,.
\end{align}
At the same time, we claim that the multiplicity of the map is bounded by an exponential in $l$. Observe that the pre-image of any configuration $\sigma'\in \Phi(\Omega_{*,w,l})$, i.e., every configuration $\sigma \in \Phi^{-1}(\sigma')$, is uniquely identified by the polymer $\gamma_w(\sigma)$. We can therefore enumerate the set of all pre-images $\sigma\in \Omega_{*,w,l}$ by the number of connected vertex sets $\gamma$ containing $w$ and having $|\partial_e\gamma| = l$. Any such polymer must have $|\gamma|\le l/\zeta$ since $\cG$ has $\zeta$-edge expansion and $|B_r(v)|\le N/2$. Since $\cG$ has degree at most $\Delta = d+1$, the number of connected vertex subsets of size $l/\zeta$ containing a given vertex~$w$ is easily bounded by $d^{l/\zeta}$. 

We can therefore write
\begin{align}
     \pi_{B^{\plus}}\big(|\partial_e \gamma_w|\ge\ell , \Gamma_* \subset \Gamma(\sigma)\big) =  \sum_{l\ge \ell} \sum_{\sigma \in \Omega_{*,w,l}} \pi_{B^{\plus}}(\sigma) & \le \sum_{l \ge \ell} \sum_{\sigma \in \Omega_{*,w,l}} \pi_{B^{\plus}}(\Phi(\sigma)) e^{ - \beta l} \nonumber \\ 
     & \le \sum_{l\ge \ell} \sum_{\sigma' \in \Phi(\Omega_{*,w,l})} \pi_{B^{\plus}}(\sigma') d^{l/\zeta} e^{ - \beta l}\,. \label{eqn:asnew4}
 \end{align}
 Here, the first inequality used~\eqref{eq:peierls-energy-gain} and the second inequality used the multiplicity bound. Since every element $\sigma'$ of $\Phi(\Omega_{*,w,l})$ has $\Gamma_* \subset \Gamma(\sigma')$, \eqref{eqn:asnew4} yields
 \begin{align*}
     \pi_{B^{\plus}}\big(|\partial_e \gamma_w|\ge\ell , \Gamma_* \subset \Gamma(\sigma)\big) \le \pi_{B^{\plus}} (\Gamma_* \subset \Gamma(\sigma)) \sum_{l\ge \ell} d^{l/\zeta} e^{-\beta l} \le Ce^{ - \beta \ell/C}\pi_{B^{\plus}} (\Gamma_* \subset \Gamma(\sigma))\,.
 \end{align*}
 Dividing both sides by $\pi_{B^{\plus}}(\Gamma_* \subset \Gamma(\sigma))$, we obtain the first bound of the lemma. 

The proof of the second bound of the lemma follows by an identical argument using~\eqref{eq:pi-hat-low-temp-rep} together with the observation that, for any $\sigma \in  \widehat \Omega_{*,w,l}$ (where the addition of the $\widehat \cdot$ indicates that $\sum_{\gamma \in \Gamma(\sigma)} |\gamma|\le N/2$, so that $M(\sigma) \ge 0$), the application of the map $\Phi$ preserves the property that the magnetization is non-negative as it only increases the number of $+1$ spins. 
\end{proof}

\begin{proof}[\textbf{\emph{Proof of Lemma~\ref{lem:wsm-within-phase-random-graph}}}]
Suppose $\cG$ is a $\Delta$-regular graph with $\zeta$-edge expansion, for $\zeta(\Delta)>0$ given by Lemma~\ref{lem:random-graph-expansion}; by Lemma~\ref{lem:random-graph-expansion}, $\cG\sim \Prrg$ satisfies this property with probability $1-o(1)$. 
Fix $v\in V(\cG)$ and $r\le (1/2)\log_d N$. For ease of notation, let $B= B_r(v)$. 
We will prove the WSM within a phase property by constructing a coupling of $\sigma^+\sim \pi_{B^{\plus}}$ and $\widehat \sigma \sim \widehat \pi$ such that $\sigma_v^+ = \widehat \sigma_v$ except with probability $Ce^{ - r/C}$. 

Let $\widehat \cE$ be the set of configurations on $V(\cG)\setminus B$ such that if they are concatenated with the all-minus configuration on $B$, they belong to $\widehat \Omega$. The coupling $(\sigma^+,\widehat \sigma)\sim \mathbb P$ is constructed as follows: 
\begin{enumerate}
    \item Sample the configuration $\widehat \sigma$ on $B^c := V(\cG) \setminus B$. 
    \item If $\widehat \sigma(B^c)\notin \widehat \cE$, then independently sample $\sigma^+(B)\sim \pi_{B^{\plus}}$ and $\widehat \sigma(B)\sim \widehat\pi (\cdot \mid \widehat \sigma(B^c))$.  
    \item If $\widehat \sigma(B^c)\in \widehat \cE$, then independently, from $\partial B$ inwards, sample the spins of $(\sigma^+,\widehat \sigma)$ in the minus component of $\partial B$ (i.e., the connected components of minus clusters containing some vertex in $\partial B$) in the configuration $\widetilde \sigma := \sigma^+ \wedge \widehat \sigma$. This can be done by iteratively revealing the vertex sets $(V_i^-)_i$ as follows: let $V_0^- := \partial B$; for $i\ge 1$, let $v_{i-1}$ be an arbitrary vertex in $V_{i-1}^-$, and sample (independently) the values of $\sigma^+_{v_{i-1}}$ and $\widehat \sigma_{v_{i-1}}$ conditionally on their values on $V_{i-1}^-$; set 
    \begin{align*}
        V_{i}^- := \begin{cases} V_{i-1}^{-}\setminus \{v_{i-1}\} & {\rm if\ } \widetilde \sigma_{v_{i-1}} = +1; \\ (V_{i-1}^{-} \setminus \{v_{i-1}\}) \cup (\bigcup \{w\sim v_{i-1}: w\notin \bigcup_{j<i} V_{j}^-\}) & {\rm if\ }\widetilde \sigma_{v_{i-1}} = -1.
        \end{cases}
    \end{align*}
    This iterative procedure terminates when $V_i^-$ is empty; call $S = \bigcup_{j} V_j^-$ the set of all spins that have then been revealed by the above procedure. Necessarily then, its inner boundary has been revealed to consist only of spins that are $+1$ in both $\sigma, \widetilde \sigma$, and no spin values of $B\setminus S$ have been revealed. 
    \item Draw the values of $\sigma^+(B\setminus S)$ conditionally on the revealed configuration (i.e., from the measure $\pi_{B\setminus S}^{\plus}$) and let $\widehat \sigma$ take the same values on $B\setminus S$.
\end{enumerate}
The fact that this is a valid coupling can be easily checked, with the main technicality to note being that the Ising measure $\widehat \pi$, when conditioned on an exterior configuration that belongs to $\widehat \cE$, satisfies the domain Markov property, since the conditioning on $\widehat \Omega$ can be dropped. This ensures that in step~(4) the distributions of $\sigma^+(B\setminus S)$ and $\widehat \sigma (B\setminus S)$ are the same conditional on their respective revealed configurations, namely conditional on all spins of $\partial (B\setminus S)$ being $+1$, allowing us to use the identity coupling. 

It is clear that, on the intersection of the event $\widehat \cE$ and the  event $\widehat \cE_{v,r}$ that $\widetilde \sigma$ does not have a minus path connecting $\partial B$ to $v$ so that $v\notin S$, we will have with probability one under the above coupling that $\sigma^+_v = \widehat \sigma_v$. 
It therefore suffices to bound the probability under the above coupling that this intersection of events does not occur. More precisely, by the coupling definition of total variation distance,
\begin{align}
    \|\pi_{B^{\plus}}(\sigma_v \in \cdot) - \widehat \pi(\sigma_v \in \cdot)\|_\tv \le \mathbb P((\widehat \cE \cap \widehat \cE_{v,r})^c) \le  \widehat \pi(\widehat \cE^c) + \mathbb P(\widehat \cE_{v,r}^c)\,.  \label{eqn:asnew5}
\end{align}
We begin with the first probability on the right-hand side of~\eqref{eqn:asnew5}. In order for $\widehat \cE^c$ to occur, it must be the case that the overall magnetization is within $|B_r(v)|$ of $0$; since $r\le (1/2)\log_d N$, $|B_r(v)|= o(N)$ and therefore, this probability is at most $Ce^{ - N/C}$ by~\eqref{eq:magnetization-ld-random-graph}. 

Turning now to the second event in~\eqref{eqn:asnew5}, we can bound its probability using the following claim that provides exponential tails on the clusters of $\widetilde \sigma$. 

\begin{lemma}\label{lem:sigma-tilde-exponential-tails}
Suppose $\beta_0$ is sufficiently large. Independently draw $\sigma^+\sim \pi_{B^{\plus}}$ and $\widehat \sigma\sim \widehat \pi$, and let $\widetilde \sigma := \sigma^+ \wedge \widehat \sigma$. For every $w\in B$ and all $\beta>\beta_0$, 
\begin{align*}
    \mathbb P(|\gamma_w (\widetilde \sigma)| \ge \ell) \le Ce^{ - \beta \ell/C}\,,
\end{align*}
for a constant $C(\Delta)>0$ independent of $\beta$.
\end{lemma}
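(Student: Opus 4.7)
The plan is to extend the Peierls-map argument from Lemma~\ref{lem:random-graph-clusters-exponential-tails} from single polymers to joint families of polymers drawn from $\sigma^+$ and $\widehat \sigma$, leveraging their independence and the $\zeta$-edge expansion of $\cG$.

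Since $\widetilde \sigma_v=-1$ exactly when $\sigma^+_v=-1$ or $\widehat \sigma_v=-1$, on the event $\{|\gamma_w(\widetilde \sigma)|\ge \ell\}$ I would let $F^+$ (resp.\ $\widehat F$) be the collection of $\sigma^+$-polymers (resp.\ $\widehat \sigma$-polymers) contained in $\gamma_w(\widetilde \sigma)$, so that $U:=(\bigcup F^+)\cup(\bigcup \widehat F)=\gamma_w(\widetilde \sigma)$ is $\cG$-connected, contains $w$, and has $|U|\ge \ell$. Deterministically $|U|\le N/2+\sqrt N$, using $\widehat \sigma\in \widehat \Omega$ and $|V^-(\sigma^+)|\le |B|\le \sqrt N$. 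Edge expansion applied to the smaller of $U$ and $V(\cG)\setminus U$ then yields $|\partial_e U|\ge \zeta\ell/3$, and since every edge of $\partial_e U$ lies in $\partial_e \gamma$ for some $\gamma\in F^+\cup \widehat F$, the total boundary
\begin{align*}
   L\;:=\;\sum_{\gamma\in F^+}|\partial_e \gamma|+\sum_{\gamma\in \widehat F}|\partial_e \gamma|\;\ge\; \zeta\ell/3\,.
\end{align*}

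The core step is a joint Peierls bound: for any admissible pair $(F^+,\widehat F)$ of pairwise-separated polymer families,
\begin{align*}
   \mathbb P\bigl(F^+\subset \Gamma(\sigma^+),\,\widehat F\subset \Gamma(\widehat \sigma)\bigr)\;\le\; e^{-\beta L}\,.
\end{align*}
This follows because flipping $\bigcup F^+$ (resp.\ $\bigcup \widehat F$) from $-1$ to $+1$ is an injective map that reduces the cut by exactly $L(F^+)$ (resp.\ $L(\widehat F)$), exactly as in the proof of Lemma~\ref{lem:random-graph-clusters-exponential-tails}; the restriction to $\widehat \Omega$ is preserved for $\widehat \pi$ since the flip only increases the magnetization. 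Independence of $\sigma^+$ and $\widehat \sigma$ then multiplies the two single-family bounds.

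A union bound over admissible pairs closes the argument: a $\cG$-connected subset $U\ni w$ of size $m$ has at most $(e\Delta)^m$ choices, and given $U$ each vertex is labeled by one of three types $\{F^+\text{ only},\widehat F\text{ only},\text{both}\}$, so the number of admissible pairs of union size $m$ is at most $(3e\Delta)^m$. Splitting into the small-$U$ regime $|U|\le N/3$ (where $m\le L/\zeta$ by edge expansion applied to $U$) and the large-$U$ regime $|U|>N/3$ (where $L\ge \zeta N/7$ by expansion of the complement, dominating the $(3e\Delta)^{|U|}$ volume term for $\beta$ large), I would obtain $\mathbb P(|\gamma_w(\widetilde \sigma)|\ge \ell)\le Ce^{-\beta\ell/C}$ as soon as $\beta\ge \beta_0(\Delta)$ with $\beta_0$ larger than, say, $4\log(3e\Delta)/\zeta$. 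The main technical subtlety is verifying that the outer-boundary edges of two distinct polymers (within a single configuration) are disjoint, so that the simultaneous flip in the multi-polymer Peierls bound reduces the cut by exactly the sum of the individual polymer boundaries with no cancellation or double-counting; this is automatic since two distinct maximal $-1$ clusters are separated by $\cG$-distance at least two.
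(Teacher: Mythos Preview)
Your proposal is correct and takes a genuinely different route from the paper.

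The paper decomposes $\gamma_w(\widetilde\sigma)$ into \emph{generations} $G_0,G_1,\ldots$, alternating between $\sigma^+$-clusters and $\widehat\sigma$-clusters grown from the boundary of the previous generation, and then applies the \emph{conditional} single-polymer bound of Lemma~\ref{lem:random-graph-clusters-exponential-tails} iteratively (each time conditioning on the polymers already revealed). The resulting enumeration is over sequences $(L_{i,j})$ of polymer sizes together with choices of representative boundary vertices.

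Your approach bypasses both the generational structure and the conditional lemma: you enumerate directly over the connected set $U\ni w$ and a three-type labeling of its vertices, and apply a single \emph{joint} Peierls map (flip all of $\bigcup F^+$ in $\sigma^+$ and all of $\bigcup \widehat F$ in $\widehat\sigma$ simultaneously). Independence of the two configurations lets the two Peierls factors multiply, and the distance-$2$ separation of distinct polymers within a single configuration ensures the cut decreases by exactly $L(F^+)$ (resp.\ $L(\widehat F)$) with no cancellation. This is arguably cleaner: the counting reduces to the standard lattice-animal bound $(e\Delta)^m$ times $3^m$ labels, and there is no need for the conditional form of Lemma~\ref{lem:random-graph-clusters-exponential-tails} at all. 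The paper's approach, on the other hand, is more modular in that it reuses Lemma~\ref{lem:random-graph-clusters-exponential-tails} as a black box rather than reopening the Peierls argument. Your handling of the large-$|U|$ regime via expansion of the complement (using $|U|\le N/2+O(\sqrt N)$ from $\widehat\sigma\in\widehat\Omega$ and $|B|=O(\sqrt N)$) is a detail the paper does not need to isolate, since its generational bookkeeping stays at the level of individual polymer boundaries throughout.
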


Since the content of Lemma~\ref{lem:sigma-tilde-exponential-tails} is not far beyond that of Lemma~\ref{lem:random-graph-clusters-exponential-tails}, let us defer its proof and conclude the proof of Lemma~\ref{lem:wsm-within-phase-random-graph} assuming Lemma~\ref{lem:sigma-tilde-exponential-tails}. The event $\widehat \cE_{v,r}^c$ requires that the component $\gamma_v(\widetilde \sigma)$ intersects $\partial B$, which in turn requires that $|\gamma_v(\widetilde \sigma)|\ge r$. Plugging in $\ell =r$ in Lemma~\ref{lem:sigma-tilde-exponential-tails}, we see that $\mathbb P(\widehat \cE_{v,r}^c)\le Ce^{ - \beta r/C}$, which combined with~\eqref{eqn:asnew5} and our earlier bound on $\widehat \pi(\widehat \cE^c)$ implies  Lemma~\ref{lem:wsm-within-phase-random-graph}. 
\end{proof}

It now remains to deduce Lemma~\ref{lem:sigma-tilde-exponential-tails} from Lemma~\ref{lem:random-graph-clusters-exponential-tails}. 

\begin{proof}[\textbf{\emph{Proof of Lemma~\ref{lem:sigma-tilde-exponential-tails}}}]
Our goal is to establish an exponential tail bound on the minus clusters of
$\widetilde \sigma$. Any minus cluster of $\widetilde \sigma$ containing $v$ can be expressed via the following procedure: 
\begin{enumerate}
    \item The zero'th generation minus cluster given by $G_0 = \gamma_v(\sigma^+)$ if it is non-empty, and by $\gamma_v(\widehat \sigma)$ otherwise. Suppose it is given by $\gamma_v(\sigma^+)$ (otherwise swap the roles of $\sigma^+$ and $\widehat \sigma$ in what follows). 
    \item Iteratively, for each odd $i$, define the $i$'th generation of minus clusters, denoted $G_i$, as the union of all minus clusters of $\widehat \sigma$ that contain vertices of $\partial(\bigcup_{j<i} G_j) = \{w: d(w,\bigcup_{j<i} G_j) = 1\}$. 
    \item For each even $i$, define the $i$'th generation of minus clusters, denoted $G_i$, as the union of all minus clusters of $\sigma^+$ that contain vertices of $\partial(\bigcup_{j<i} G_j) = \{w: d(w,\bigcup_{j<i} G_j) = 1\}$. 
\end{enumerate}
In this manner, $\gamma_v(\widetilde \sigma)$ is expressed as the following disjoint union: 
\begin{align*}
    \gamma_v(\widetilde \sigma) = \bigcup G_j = \bigcup_{i\in 2\mathbb N} \bigcup_j \gamma_{v_{i,j}}(\sigma^+) \cup \bigcup_{i-1 \in 2\mathbb N} \bigcup_j \gamma_{v_{i,j}}(\widehat \sigma)\,.
\end{align*}
Here, $v_{i,j}$ are representative vertices of the constituent components of $G_i$, having $v_{i,j} \in \partial (\bigcup_{k<i} G_k)$ and 
$d(v_{i,j}, G_{i-1})=1$. (These representative vertices are uniquely determined by $G_i$ using, say, an arbitrary ordering of the vertices of $B$.)

Then, the probability that $\gamma_v(\widetilde \sigma)$ is given by any fixed set is at most 
\begin{align*}
    \mathbb P\Big(\gamma_v(\widetilde \sigma) = \bigcup_{i,j}\gamma_{v_{i,j}}\Big) \le \pi_{B^{\plus}}\Big(\bigcup_{i\in 2\mathbb N} \gamma_{v_{i,j}}\in \Gamma(\sigma)\Big) \widehat \pi \Big(\bigcup_{i-1\in 2\mathbb N} \gamma_{v_{i,j}}\in \Gamma(\sigma)\Big)\,.
\end{align*}
In order for the cluster containing $v_{i,j}$ to be $\gamma_{v_{i,j}}$ it must have $|\partial_e \gamma_{v_{i,j}}| \ge \zeta |\gamma_{v_{i,j}}|$ by the $\zeta$-edge expansion assumption on $\cG$. Therefore, by repeated application of Lemma~\ref{lem:random-graph-clusters-exponential-tails} (iteratively conditioning on $\Gamma_{i,j} := \bigcup_{l<i}\bigcup_{k<j} \{\gamma_{v_{l,k}}\}$ being a subset of $\Gamma(\sigma)$),  
\begin{align*}
   \mathbb P\Big(\gamma_v(\widetilde \sigma) = \bigcup_{i,j}\gamma_{v_{i,j}}\Big) \le  C\exp\Big( - \beta \zeta \sum_{i,j} |\gamma_{v_{i,j}}| /C\Big)\,.
\end{align*}
Let us now enumerate the number of possible choices of $\gamma_{v_{i,j}}$ having $\sum_{i,j} |\gamma_{v_{i,j}}| = L$. These can be enumerated by first choosing $(L_i)_{i}$ having $\sum_{i} L_i = L$ and for each $i$, $L_{i,j}$ having $\sum_{j} L_{i,j} = L_i$ such that $|\gamma_{v_{i,j}}| = L_{i,j}$ for each $i,j$. There are, crudely, at most $2^{2L}$ many choices for the sequence $(L_{i,j})_{i,j}$. Now, for each $i \ge 1$, enumerate first over the at most $2^{d L_{i-1}}$ many subsets $(v_{i,j})_j \in \partial G_{i-1}$ (in the $i=0$ case, there is no such choice as $(v_{0,j})_j$ is the singleton $v$) and then for each $v_{i,j}$ enumerate over the at most $d^{L_{i,j}}$ many choices of vertex subsets containing $v_{i,j}$ of size $L_{i,j}$. Putting all the above together, we obtain
\begin{align*}
    \mathbb P(|\gamma_v(\widetilde \sigma)|\ge \ell) \le \sum_{L \ge \ell}  Ce^{ - \beta \zeta L/C} 2^{2L} 2^{d L} d^{L} \le C' e^{ - \beta \ell /C'}\,,
\end{align*}
for some other constant $C'$ possibly depending on $\Delta$, exactly as desired. 
\end{proof}

\subsection{Local mixing with plus boundary condition} 
In this subsection, we establish that the log-Sobolev constant of the Ising Glauber dynamics on a $K$-tree-like ball of radius $r$ of $\cG$, with $\plusone$ boundary condition, is $\Omega(1)$. This will directly imply Lemma~\ref{lem:local-mixing-random-graph}. Recall that, 
for a Markov chain on a finite state space $\Omega$ with transition matrix $P$, reversible with respect to a distribution $\mu$, the {\it Dirichlet form\/} of any function $f:\Omega \to \mathbb R$ is defined by
\begin{align}\label{eq:Dirichlet-form}
    \mathcal E(f,f) := \frac{1}{2}\sum_{\omega,\omega'\in \Omega} \mu(\omega) P (\omega, \omega') (f(\omega) - f(\omega'))^2\,,
\end{align}
and its \emph{log-Sobolev constant} is given by 
\begin{align}\label{eq:lsi-constant}
    \alpha(P) := \inf_{f: \mbox{Ent}_\mu[f^2]\ne 0} \frac{\mathcal E(f,f)}{\mbox{Ent}_\mu[f^2]}\,, \qquad \mbox{where} \qquad \mbox{Ent}_{\mu} [f^2] = \mathbb E_{\mu}\Big[f^2 \log \frac{f^2}{\mathbb E_{\mu}[f^2]}\Big]\,.
\end{align}
The log-Sobolev constant of a Markov chain governs its rate of convergence to equilibrium in total-variation distance; namely (see, e.g.,~\cite{SClecture-notes}) for any $\alpha<\alpha(P)$, 
    \begin{align}\label{eq:LS-implies-tv-mixing}
        \max_{x_0} \|\mathbb P(X_t^{x_0}\in \cdot) - \mu\|_\tv\le \sqrt{2} e^{-\alpha t} \Big(\log \frac{1}{\min_{x} \mu(x)}\Big)^{1/2}\,.
    \end{align}

Combining~\eqref{eq:LS-implies-tv-mixing} with the trivial bound $\min_{\sigma} \pi_{B_r^{\plus}(v)}(\sigma) \ge e^{ - \Omega(d^r)}$, we have reduced proving Lemma~\ref{lem:local-mixing-random-graph} to establishing the following fact about
the corresponding log-Sobolev constants. 
\begin{lemma}\label{lem:local-LS-constants}
There exists $\alpha_\star(\beta,\Delta)>0$ such that with probability $1-o(1)$, $\cG \sim \Prrg$ is such that 
\begin{align*}
    \min_{v\in V(\cG)} \min_{r\le (1/3) \log_d N} \alpha\big(P_{B_r^{\plus}(v)}\big) \ge \alpha_\star >0\,,
\end{align*}
where $P_{B_r^{\plus}(v)}$ is the transition matrix of the Ising Glauber dynamics on $B_r^{\plus}(v)$. 
\end{lemma}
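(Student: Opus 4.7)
The plan is to reduce the Glauber dynamics on $B_r^{\plus}(v)$ to dynamics on $\Delta$-regular subtrees with $\plusone$ boundary condition, on which the sharp log-Sobolev bounds of~\cite{MSW-trees-bc} apply. Throughout, fix $\cG$ satisfying the conclusion of Lemma~\ref{lem:random-graph-treelike}, which holds with probability $1-o(1)$.

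By Lemma~\ref{lem:random-graph-treelike}, there is a set $E^* \subset E(B_r(v))$ with $|E^*|\le K = K(\Delta)$ whose removal turns $B_r(v)$ into a spanning forest $F$, each of whose components is a subtree of the $\Delta$-regular tree of depth at most $r$. Let $\pi_F^{\plus}$ denote the Ising measure on $F$ with the $\plusone$ boundary inherited from $\partial B_r(v)$. Since $\pi_{B_r^{\plus}(v)}$ and $\pi_F^{\plus}$ differ only in the $K$ interaction terms corresponding to $E^*$, their Radon-Nikodym derivative is bounded by $e^{O(\beta K)}$. A standard two-step comparison---combining the Holley-Stroock perturbation principle for the entropy functional with the observation that single-site Glauber transition probabilities under $\pi_{B_r^{\plus}(v)}$ and $\pi_F^{\plus}$ also differ by at most $e^{O(\beta)}$ (and only near the $O(K)$ endpoints of $E^*$)---then yields
\begin{align*}
    \alpha(P_{B_r^{\plus}(v)}) \ge e^{-O(\beta K)} \alpha(P_F^{\plus})\,.
\end{align*}

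Since $\pi_F^{\plus}$ factorizes over the connected components of $F$, the chain $P_F^{\plus}$ is a product of independent single-component chains and $\alpha(P_F^{\plus})$ equals the minimum log-Sobolev constant over these components. Each component is a subtree $T$ whose leaves split into those on $\partial B_r(v)$ (carrying the inherited $\plusone$ boundary) and at most $2|E^*|\le 2K$ ``internal'' leaves created by the edge removals in $E^*$ (carrying free boundary). Adding a $\plusone$ phantom neighbor at each internal leaf produces a measure $\widetilde\pi_T$ on $T$ with uniformly $\plusone$ boundary condition, at the cost of an additional Holley-Stroock factor $e^{O(\beta K)}$. On $(T,\widetilde\pi_T)$ the sharp bounds of~\cite{MSW-trees-bc} yield log-Sobolev constant $\Omega_{\beta,\Delta}(1)$, uniform in the depth of $T$. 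Chaining the estimates gives $\alpha(P_{B_r^{\plus}(v)}) \ge e^{-O(\beta K)}\cdot \Omega_{\beta,\Delta}(1) = \alpha_\star(\beta,\Delta)>0$, proving the lemma.

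The main obstacle is the bounded collection of ``internal leaves'' with free boundary that appears after the tree reduction: on a regular tree with free boundary the low-temperature Ising model can develop exactly the two-phase bottleneck that destroys the log-Sobolev constant, so~\cite{MSW-trees-bc} cannot be applied to the raw forest components. The remedy---fixing up the boundary by attaching $O_\Delta(1)$ phantom $\plusone$ spins---succeeds precisely because Lemma~\ref{lem:random-graph-treelike} guarantees only $K=O(1)$ tree-excess edges, so the perturbation cost is an $r$-independent constant.
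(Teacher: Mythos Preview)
Your proof is correct and shares the paper's core strategy: exploit the $K$-tree-like property to compare $\pi_{B_r^{\plus}(v)}$ to a tree Ising model with $\plusone$ boundary via an $e^{O(\beta K)}$ Holley--Stroock perturbation, then invoke~\cite{MSW-trees-bc}. The reduction mechanism differs, however. You \emph{delete} the $K$ excess edges to obtain a spanning forest $F$, then patch the $\le 2K$ missing-neighbor positions with phantom $\plusone$ spins so that each component becomes an (irregularly shaped) subtree of the $\Delta$-regular tree with all-$\plusone$ boundary, and apply MSW component-wise. The paper instead \emph{augments} $B_r(v)$ by a disjoint union with $2K$ auxiliary full trees $\cT_{e_k,1},\cT_{e_k,2}$, shows that this disjoint union $H_{r,v}^{\plus}$ and a single \emph{complete} $\Delta$-regular tree $T_r^{\plus}$ of depth~$r$ have the same vertex set and differ by exactly $3K$ edges, applies the Radon--Nikodym comparison to get $\alpha(P_{H_{r,v}^{\plus}}) \ge e^{-C\beta K}\alpha(P_{T_r^{\plus}})$, and then extracts $\alpha(P_{B_r^{\plus}(v)}) \ge \alpha(P_{H_{r,v}^{\plus}})$ from tensorization over the product structure.

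The paper's route lands exactly on the object~\cite[Theorem~1.2]{MSW-trees-bc} is stated for (the complete regular tree), whereas your route requires the MSW log-Sobolev bound uniformly over arbitrary subtree shapes with $\plusone$ boundary; this holds and is implicit in their recursive argument, but strictly speaking needs a remark. Two minor points of presentation: your phrase ``internal leaves'' is misleading, since the endpoints of removed edges need not be leaves of $F$---what you are really counting is the $\le 2K$ missing-neighbor \emph{positions}, which is the correct quantity for the perturbation cost; and since $B_r(v)$ is connected, removing exactly the tree-excess edges gives a spanning \emph{tree}, so the product-over-components step is typically vacuous.
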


\begin{proof}
Let $e_1, e_2,...,e_K$ be edges whose removal from $B_r(v)$ results in a forest. In the breadth-first exploration of $B_r(v)$, each of these edges goes from a vertex at some height $h_1(e)$ to $h_2(e) \in \{h_1(e),h_1(e)+1\}$.  For each such edge~$e_k$, let $\cT_{e_k,1}$ and $\cT_{e_k,2}$ be two full $\Delta$-regular trees of depths $h_1(e_k)-1$ and $h_2(e_k)-1$ respectively, with $\plusone$ boundary condition. Let $H_{r,v}^{\plus}$  be the graph given by the disjoint union of $B_r(v)$ and $\{\cT_{e_k,1},\cT_{e_k,2}\}_{k}$, each with $\plusone$ boundary condition. We claim that the following sequence of graph modifications will change $H_{r,v}^{\plus}$ into a full $\Delta$-regular tree having $\plusone$ boundary condition at depth~$r$. 
\begin{enumerate}
    \item Take $B_r(v)$, and delete all edges $e_1,...,e_K$. 
    \item For each $i$, let $v_1$ and $v_2$ respectively denote the endpoints of~$e_i$ at heights $h_1(e_i)$ and $h_2(e_i) \in \{h_1(e_i), h_1(e_i)+1\}$. 
    \item Add an edge connecting $v_1$ to the root of $\cT_{e_i,1}$ and an edge connecting $v_2$ to the root of $\cT_{e_i,2}$. 
\end{enumerate}
In this manner, the vertices of $H_{r,v}$ are identified with the vertices of a full $\Delta$-regular tree of depth $r$,  and the symmetric difference of their edge sets has size $3K$. In particular, since the vertices are identified with one another, the corresponding sets of Ising model configurations are the same, and we have the following bound on the Radon--Nikodym derivative between the respective probability distributions:
\begin{align*}
    \max_{\sigma}\Bigg|\frac{\pi_{H_{r,v}^{\plus}}(\sigma)}{\pi_{T_r^{\plus}}(\sigma)} \vee \frac{\pi_{T_r^{\plus}}(\sigma)}{\pi_{H_{r,v}^{\plus}}(\sigma)}\Bigg| \le e^{ 6 \beta K}\,.
\end{align*}
A similar bound applies to the ratio of the transition probabilities for the respective Ising Glauber dynamics on $T_r^{\plus}$ and $H_{r,v}^{\plus}$, and
by~\cite[Theorem 4.1.1]{SClecture-notes}, the above also bounds the ratio of the entropies of the two distributions. In particular, we deduce that the log-Sobolev constants of the Ising Glauber dynamics on $T_r^{\plus}$ and $H_{r,v}^{\plus}$ are within a factor of $e^{C\beta K}$ of one another. By~\cite[Theorem 1.2]{MSW-trees-bc}, the log-Sobolev constant on $T_r^{\plus}$ is $\Omega(1)$, so the log-Sobolev constant on $H_{r,v}^{\plus}$ is $\Omega(e^{ - C\beta K})$, which is $\Omega(1)$ when $K = O(1)$. 

Now observe that the Ising Glauber dynamics on $H_{r,v}^{\plus}$ decomposes into a product of Ising Glauber dynamics on $B_r^{\plus}(v)$, $(\cT_{e_k,1})_k$ and $(\cT_{e_k,2})_k$. As such, letting $\alpha_H,\alpha_{B}, (\alpha_{k,1})_k$ and $(\alpha_{k,2})_k$ denote the log-Sobolev constants of each of these Glauber dynamics, we obtain the desired inequality 
\begin{align*}
    \alpha_B \ge \min\Big\{\alpha_B, \min_k \alpha_{k,1}, \min_k \alpha_{k,2}\Big\} = \alpha_H  = \Omega(e^{-C\beta K})\,.
\end{align*}
Here, the first equality used the classical fact that the log-Sobolev constant of a product Markov chain is the minimum of the log-Sobolev constants of each of the constituent chains. 
\end{proof}

\appendix

\section{Deferred proofs using the random-cluster coarse-graining}\label{app:deferred-proofs}
In this section, we provide standard proofs that were deferred from Section~\ref{sec:Ising-wsm-within-phase}. 

\subsection{Properties of the $k$-good coarse-graining} 
\begin{proof}[\textbf{\emph{Proof of Observation~\ref{obs:connected-component-good-blocks}}}]
This follows from the fact that if $x\sim_k y$ (say, without loss of generality, that $y = x+ (k,0,...,0)$) then any subset of $B_x$ intersecting its two opposite sides in the $e_1$ direction must have intersection of size at least $k$ with $B_y$. From there, the definition of the $k$-good coarse graining $\eta(\omega)$ implies that if $\eta_x(\omega) = \eta_y(\omega) = 1$, the largest component of $B_x$ must have intersection of size at least $k$ with $B_y$ and therefore must also be connected to the largest component of $B_y$. 
\end{proof}

\begin{proof}[\textbf{\emph{Proof of Lemma~\ref{lem:separating-surface-disconnects-information}}}]
We begin with the preliminary observation that 
\begin{align*}
    d(\Lambda_m\setminus (\Gamma \cup \mathsf{Ext}(\Gamma)), \mathsf{Ext}(\Gamma))\ge 2k\,.
\end{align*}
To see this, notice by construction that every vertex in $\mathsf{Int}(\Gamma_k)$ must be at $\sim_k^\star$ distance at least $2$ from $\mathsf{Ext}(\Gamma_k)$, so that their distance in $\Lambda_m$ is at least $2k$. It is then a simple geometric observation that the distance between the above two sets is attained by the distance between their coarse-grained vertex sets $\Lambda_m^{(k)}\setminus (\Gamma_k \cup \mathsf{Ext}(\Gamma_k))$ and $\mathsf{Ext}(\Gamma_k)$, since all ``corners" of $\Gamma$ must be at vertices of $k\mathbb Z^d$.

Now let $\omega^\one$ (respectively, $\omega^\zero$) be the random-cluster configuration that modifies $\omega$ by setting all edges in $\mathsf{Ext}(\Gamma)\setminus \Gamma$ to open (respectively, closed). It suffices to show that $\omega^\one, \omega^\zero$ induce the same boundary conditions on $\partial (\Lambda_m \setminus (\Gamma \cup \mathsf{Ext}(\Gamma)))$. Suppose that two vertices on the boundary $\partial (\Lambda_m \setminus \Gamma \cup \mathsf{Ext}(\Gamma))$ are connected to one another in $\omega^\one$, but not in $\omega^\zero$. Since $\Gamma_k$ is an open separating $k$-surface, they must both be connected in $\omega(\Gamma)$ to $\mathsf{Ext}(\Gamma)$ without being part of the same cluster of $\omega(\Gamma)$. That necessitates two distinct components of size at least $2k$ in $\omega(\Gamma)$. We claim that this contradicts the fact that $\eta_x(\omega) =1$ for all $x\in \Gamma_k$. This follows immediately from Observation~\ref{obs:connected-component-good-blocks} if we establish that the open separating $k$-surface is a $k$-connected set. 
This connectedness is a consequence of the duality between $\star$-connectivity and connectivity in $\mathbb Z^d$, whereby any witness to the absence of a $\star$-connection between two sets must be connected, and vice-versa (see~\cite{DeuschelPisztora96} where this duality was proved). 
\end{proof}

\subsection{Using coarse-graining to couple random-cluster configurations} 
Our aim in this section is to prove Lemma~\ref{lem:E-very-good-probability}. Our approach is to construct an explicit coupling that ensures that items (1)--(2) in the lemma hold on $\cE_{\textsc{vg}}$, and to bound the probability of $\cE_{\textsc{vg}}^c$ directly through this coupling. The coupling consists of an algorithmic procedure for revealing the outermost very good separating surface of $(\omega,\omega')$, and is defined as follows. 

\begin{definition}\label{def:very-good-coupling}
Let $E_j$ be the set of edges $e$ for which we have sampled $(\omega_e,\omega_e')$ by step $j$ of the revealing process, and initialize it as $E_0 = \emptyset$. Initialize $F_0 = \partial \Lambda_{m}^{(k)}$ and update $F_j$ as follows: 
\begin{itemize}
    \item If $F_{j-1}\ne \emptyset$, select some $v_j\in F_{j-1}$ and sample 
  \begin{align*}
      \omega(E(B_{v_j}) \setminus E_{j-1}) &  \sim \pi^\rc_{\Lambda_m^\xi}(\omega(E(B_{v_j})\setminus E_{j-1}) \in \cdot \mid \omega(E_{j-1})) \qquad \mbox{and} \\ 
      \omega'(E(B_{v_j})\setminus E_{j-1}) & \sim \pi^{\rc}_{\Lambda_m^{\xi'}}(\omega(E(B_{v_j})\setminus E_{j-1}) \in \cdot \mid \omega'(E_{j-1}))
  \end{align*}
  using the standard monotone coupling (using the same uniform random variables to reveal the states of edges one at a time) on edges of $E(B_v)\setminus E_j$. Then, letting $N(v_{j}) = \{w\in \Lambda_{n-2k}^{(k)} : w\sim_\star^k v_j\}$, update 
    \begin{itemize}
        \item $E_j  = E_{j-1}\cup E(B_{v_j})$
        \item $F_j = F_{j-1} \setminus \{v_j\}$ if $B_{v_j}$ is \emph{very good}, and $F_j = F_{j-1} \cup N(v_j) \setminus \{v_1,...,v_j\}$ otherwise.
    \end{itemize}
    \item If $F_{j-1} = \emptyset$, set $\tau:= j-1$ and sample $$\omega(E(\Lambda_m)\setminus E_{j-1}) = \omega'(E(\Lambda_m)\setminus E_{j-1})\quad \sim \quad \pi^\rc_{\Lambda_m^\xi} \big(\omega(E(\Lambda_m) \setminus E_{j-1}) \in \cdot \mid \omega(E_{j-1})\big)\,.$$ 
\end{itemize} 
\end{definition}

\begin{lemma}\label{lem:very-good-coupling-well-defined}
The construction in Definition~\ref{def:very-good-coupling} gives a valid monotone coupling of $(\omega, \omega')$. On the event $\cE_{\textsc{vg}}$, if $\Gamma_k$ is the outermost such separating $k$-surface of very good blocks, then items (1)--(2) of Lemma~\ref{lem:E-very-good-probability} hold. 
\end{lemma}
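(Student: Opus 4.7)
The plan is to verify the coupling has the correct marginals, identify the explored region on $\cE_{\textsc{vg}}$ with the complement of the outermost very good separating $k$-surface, and then deduce items~(1)--(2) via Lemma~\ref{lem:separating-surface-disconnects-information}. For the marginals, I would check that at each step~$j$ the newly revealed edges $E(B_{v_j}) \setminus E_{j-1}$ are drawn from the correct conditional distribution $\pi^{\rc}_{\Lambda_m^\xi}(\cdot \mid \omega(E_{j-1}))$ (and symmetrically for $\omega'$), which is immediate from the domain Markov property. Whether $B_{v_j}$ is very good is determined by $\omega(E_j)$ and $\omega'(E_j)$, so the update rule for $F_j$ is adapted to the filtration generated by the revealed edges; iterating over all steps---including the final ``bulk'' draw in which the remaining edges are identically coupled---returns samples of the correct marginals $\pi^{\rc}_{\Lambda_m^\xi}$ and $\pi^{\rc}_{\Lambda_m^{\xi'}}$. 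The edge-by-edge use of common uniforms, combined with FKG for random-cluster measures at $q \ge 1$, yields the monotone coupling.

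Next, I would show that on $\cE_{\textsc{vg}}$ the exploration terminates strictly outside $\Lambda_{m/2}^{(k)}$ and exposes the outermost very good separating $k$-surface. By definition of $\cE_{\textsc{vg}}$, some open separating $k$-surface of $\zeta^{(k)}(\omega, \omega')$ lies in the annulus $\Lambda_m^{(k)} \setminus \Lambda_{m/2}^{(k)}$, while $F_j$ is grown only by adjoining $k$-$\star$-neighbors of blocks that are \emph{not} very good. Any such surface therefore blocks the frontier from advancing inward, so the explored set $D := \{v_1, \ldots, v_\tau\}$ is contained outside $\Lambda_{m/2}^{(k)}$ and consists exactly of the $k$-$\star$-component of $\partial \Lambda_m^{(k)}$ in the non-very-good set, together with the very good blocks on its $k$-$\star$-boundary. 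By Remark~\ref{rem:outermost-separating-surface}, this inner boundary is the outermost open separating $k$-surface $\Gamma_k$ of very good blocks.

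Finally, since every $x \in \Gamma_k$ is very good, $\omega(B_x) = \omega'(B_x)$ for all such $x$, so $\omega(\Gamma) = \omega'(\Gamma)$. By Lemma~\ref{lem:separating-surface-disconnects-information}, the boundary condition induced on $\Int(\Gamma)$ by $\omega(E(\Lambda_m^\xi) \setminus \Int(\Gamma))$ depends only on $\omega(\Gamma)$, and symmetrically for $\omega'$; hence the two boundary conditions coincide, which is item~(1). Conditionally on this common boundary condition, the laws of $\omega(\Int(\Gamma))$ and $\omega'(\Int(\Gamma))$ are identical, so the final identity-coupled step of the construction delivers item~(2). I expect the main difficulty to be the structural argument in the second paragraph: carefully disentangling $k$-$\star$-adjacency (used in the growth rule for $F_j$ and in Definition~\ref{def:separating-surface}) from the interior/exterior decomposition of $\Gamma_k$, and verifying that the inner boundary of the $k$-$\star$-explored region is indeed a (minimal) open separating $k$-surface that coincides with the outermost one.
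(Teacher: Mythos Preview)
Your proposal is correct and follows essentially the same approach as the paper: verify validity of the coupling via the domain Markov property and edge-by-edge monotone coupling, identify the inner boundary of the explored region as a very good separating surface, and deduce items~(1)--(2) from Lemma~\ref{lem:separating-surface-disconnects-information}. The paper's own proof is considerably terser (deferring the validity and monotonicity to~\cite{DCGR20}), whereas you spell out the structural identification of the explored set and invoke Remark~\ref{rem:outermost-separating-surface} explicitly; this extra detail is harmless and arguably clarifying. One small point: the justification that the final identity-coupled step yields the correct marginal for~$\omega'$ must hold \emph{unconditionally}, not just on~$\cE_{\textsc{vg}}$---but the same argument (the inner boundary of the explored set, whenever non-empty, consists of very good blocks, so Lemma~\ref{lem:separating-surface-disconnects-information} applies) works in general.
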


\begin{proof}
The coupling in Definition~\ref{def:very-good-coupling} can straightforwardly be seen to be a valid, monotone coupling with $\omega(E(\Lambda_m)\setminus E_\tau) = \omega'(E(\Lambda_m)\setminus E_\tau)$: we refer to Lemma 3.3 of~\cite{DCGR20} for more details. Item (1) of Lemma~\ref{lem:E-very-good-probability} follows from Lemma~\ref{lem:separating-surface-disconnects-information}. Item (2), that $\omega(\Int(\Gamma))$ equals $\omega'(\Int(\Gamma))$, follows because, crucially, upon stopping, the (inner) boundary of $\bigcup_{j\le \tau} F_j$ is a \emph{very good} separating surface of $\Lambda_m^{(k)}\setminus \Lambda_{m/2}^{(k)}$ in $(\omega (E_{\tau}), \omega'(E_{\tau}))$ and therefore according to Lemma~\ref{lem:separating-surface-disconnects-information}, the distributions 
\begin{align*}
    \pi^\rc_{\Lambda_m^{\xi}} (\omega(E(\Lambda_m) \setminus E_{\tau}) \in \cdot \mid \omega(E_{\tau})) \qquad \mbox{and}\qquad \pi^\rc_{\Lambda_m^{\xi'}} (\omega'(E(\Lambda_m) \setminus E_{\tau}) \in \cdot \mid \omega'(E_{\tau}))
\end{align*}
coincide. 
\end{proof}

\begin{proof}[\textbf{\emph{Proof of Lemma~\ref{lem:E-very-good-probability}}}]
On the event $\cE_{\textsc{vg}}$, items (1)--(2) of Lemma~\ref{lem:E-very-good-probability} hold by Lemma~\ref{lem:very-good-coupling-well-defined}. 
It thus remains to bound the probability that $E_\tau \cap \Lambda_{\frac{m}{2}+k} \ne \emptyset$ as
\begin{align*}
    \mathbb P(\cE_{\textsc{vg}}^c) \le \mathbb P(E_\tau \cap \Lambda_{\frac{m}{2} + k} \ne \emptyset)\,.
\end{align*}
Observe, first of all, that if $E_\tau \cap \Lambda_{m/2 +k} \ne\emptyset$, there must exist a sequence of steps $j_1< j_2< ... <j_s$ with $s\ge m/16k$ such that $v_{j_i}$ is at distance exactly $3$ in the metric induced by $\sim_\star^k$ from $v_{j_{i-1}}$, and such that under the coupling, the $(B_{v_{j_i}})_i$ are not very good. We also claim that, uniformly over the configurations 
$$\omega\big(E(B_{v_{j_1}}) \cup \cdots \cup E(B_{v_{j_{i-1}}})\big), \omega'\big(E(B_{v_{j_1}})\cup \cdots \cup E(B_{v_{j_{i-1}}})\big)\,,$$
the probability that $B_{v_{j_i}}$ is very good is at least $1-Ce^{-k/C}$ for $k$ large enough. On the one hand, the probability of the block being $k$-bad is at most $Ce^{ - k/c}$ by Lemma~\ref{lem:good-whp-low-temp}, because $B_{2k}(v_{j_i})$ is disjoint from all the blocks $B_{v_{j_{1}}} \cup \cdots \cup B_{v_{j_{i-1}}}$.  On the other hand, since the above coupling is monotone, we have 
\begin{align*}
    \mathbb P\Big(\omega\big(E(B_{v_{j_i}})) \ne \omega'(E(B_{v_{j_{i}}})) \mid & \omega(E(B_{v_{j_1}})\cup\cdots\cup E(B_{v_{j_{i-1}}})\big),  \omega'\big(E(B_{v_{j_1}})\cup\cdots\cup E(B_{v_{j_{i-1}}})\big)\Big) \\
    & \le \sum_{e\in E(B_{v_{j_i}})} \pi^\rc_{B_{2k}^\one(v_{j_i})}(\omega(e) =1) - \pi^\rc_{B_{2k}^\zero(v_{j_i})}(\omega(e) =1)\,.
\end{align*}
The quantity on the right-hand side above is at most $Ce^{- k/C}$ for $k$ sufficiently large, by the random-cluster WSM property. Altogether then, the probability of $B_{v_{j_i}}$ being very good, conditional on both $\omega$ and $\omega'$ on $E(B_{v_{j_{1}}}),...,E(B_{v_{j_{i-1}}})$, is at least $1-Ce^{ -k/C}$ when $k$ is sufficiently large. 

Taking a union bound over all possible paths of $\sim_\star^k$ distance $3$ separated vertices of $\Lambda_{m-2k}^{(k)}$, of the event that those blocks are not very good in the coupling, we deduce that 
\begin{align*}
    \mathbb P(E_\tau \cap \Lambda_{m/2 +k}\ne \emptyset) \le (3^{3d} k^{-d})^{m/(16k)}\,,
\end{align*}
which, as long as $k$ is a sufficiently large constant that $k^{-d} <3^{-3d}$, is $\exp( - \Omega(m))$ as claimed. 
\end{proof}

\section{Bottleneck for the magnetization on random regular graphs}\label{app:random-graph-magnetization-ld}

In this section, we include a short proof of the magnetization bottleneck for the Ising model on the random $\Delta$-regular graph at sufficiently low temperatures. 

\begin{proof}[\textbf{\emph{Proof of~\eqref{eq:magnetization-ld-random-graph}}}]
Fix $\zeta>0$ from Lemma~\ref{lem:random-graph-expansion}, and suppose $\cG$ is a $\Delta$-regular random graph having $\zeta$-edge-expansion. By Lemma~\ref{lem:random-graph-expansion}, this has probability $1-o(1)$ under $\Prrg$. Let $\widehat \Omega_{\epsilon}$ be the set of all Ising configurations having total magnetization $M(\sigma) \in [0,\epsilon N]$, and analogously let $\widecheck \Omega_\epsilon$ be the set of all Ising configurations having $M(\sigma) \in [-\epsilon N, 0]$. Evidently, by symmetry it suffices for us to bound $\pi(\widehat \Omega_\epsilon)$ for $\epsilon(\beta)$ sufficiently small. 

We can clearly bound  
\begin{align*}
    \pi(\widehat \Omega_\epsilon) \le \frac{\pi(\widehat \Omega_\epsilon)}{\pi(\plusone)} \le |\Omega_\epsilon| \max_{\sigma \in \widehat \Omega_\epsilon} e^{ - \beta |C(\sigma)|}
\end{align*}
where we recall that $C(\sigma)$ is the set of edges in $E(\cG)$ connecting vertices of different spins. This in turn yields the inequality 
\begin{align*}
    \pi(\widehat \Omega_\epsilon) \le 2^{N} \max_{S \subset \cG: \frac N2 - \frac{\epsilon N}{2} \le |S| \le \frac N2} e^{ -\beta |\partial_e S|} \le 2^N e^{ - \beta \zeta (\frac 12- \frac \epsilon 2) N}\,, 
\end{align*}
where the second inequality used the $\zeta$-expansion of the graph $\cG$. This then yields the desired $Ce^{- N/C}$ bound on $\pi (\widehat \Omega_\epsilon \cup \widecheck  \Omega_\epsilon)$ by taking $\epsilon<1$ and $\beta$ sufficiently large depending on $\zeta$. 
\end{proof}

\bibliographystyle{abbrv}
\bibliography{references}

\end{document}